\documentclass[final]{siamltex}

\usepackage[top=2.5cm,bottom=2.5cm,right=2.5cm,left=2.5cm]{geometry}

\def\g{\gamma}
\def\t{\theta}
\def\o{\omega}

\def\clS{\mathcal{S}}
\def\clA{\mathcal{A}}

\newcommand{\tsum}{\textstyle{\sum}}

\def\la{\langle}
\def\ra{\rangle}

\def\eqnok#1{(\ref{#1})}
\usepackage{epsfig}
\usepackage{calc}
\usepackage{amstext}
\usepackage{amsmath}
\usepackage{multicol}
\usepackage{amsfonts}
\usepackage{amssymb}
\usepackage{mathrsfs}
\usepackage{bm}
\usepackage{bbm}
\usepackage{xcolor}
\usepackage{comment}
\usepackage{footnote}

\newcommand{\AR}[2]{\left[\begin{array}{#1}#2\end{array}\right]}

\newcommand{\T}{\mathrm{T}}

\newcommand{\res}{\mathrm{res}}

\newcommand{\bbr}{\Bbb{R}}
\newcommand{\beq}{\begin{equation}}
\newcommand{\eeq}{\end{equation}}
\newcommand{\beqa}{\begin{eqnarray}}
\newcommand{\eeqa}{\end{eqnarray}}
\newcommand{\beqas}{\begin{eqnarray*}}
\newcommand{\eeqas}{\end{eqnarray*}}

\usepackage{algorithm,algorithmic}
\def\argmin{{\rm argmin}}
\def\argmax{{\rm argmax}}

\def\bbe{{\mathbb{E}}}
\def\vgap{\vspace*{.1in}}
\newcommand{\nn}{\nonumber}

\title{Simple and optimal methods for stochastic variational inequalities, II: Markovian noise and policy evaluation in reinforcement learning
	\thanks{
		This research was partially supported by the ARO grant W911NF-18-1-0223 and ONR grant N00014-20-1-2089.
		Coauthors of this paper are listed according to the alphabetic order.
}}
\author{
	Georgios Kotsalis
	\thanks{H. Milton Stewart School of Industrial and Systems Engineering, Georgia Institute of Technology, Atlanta, GA, 30332 .
		(email: {\tt gkotsalis3@gatech.edu}).}
	\and
	Guanghui Lan
	\thanks{H. Milton Stewart School of Industrial and Systems Engineering, Georgia Institute of Technology, Atlanta, GA, 30332 .
		(email: {\tt george.lan@isye.gatech.edu}).}
	\and
	Tianjiao Li
	\thanks{H. Milton Stewart School of Industrial and Systems Engineering, Georgia Institute of Technology, Atlanta, GA, 30332 .
		(email: {\tt tli432@gatech.edu}).}
}

\date{\today}

\begin{document} 

\maketitle

\begin{abstract}
	The focus of this paper is on stochastic variational inequalities (VI) under Markovian noise. 
	A prominent application of our algorithmic developments is the stochastic policy evaluation problem in reinforcement learning. Prior investigations in the literature focused on temporal difference (TD) learning by employing  nonsmooth finite time analysis motivated by stochastic subgradient descent leading to certain limitations.  These  limitations encompass the requirement of analyzing a modified TD algorithm that involves projection to an a-priori defined Euclidean ball, achieving a non-optimal convergence rate and no clear way of deriving the beneficial effects of parallel implementation. Our approach remedies these shortcomings 
	in the broader context of stochastic VIs and in particular when it comes to stochastic policy evaluation. We developed a variety of simple TD learning  type algorithms motivated by its original version that maintain its simplicity, while offering distinct advantages from a non-asymptotic analysis point of view. We first provide an improved 
	analysis of the standard TD algorithm that can benefit from parallel implementation. 
	Then we present versions of a conditional TD algorithm (CTD), that involves periodic updates of the stochastic iterates, which reduce the bias and therefore exhibit improved iteration complexity. This brings us to the fast TD (FTD) algorithm which combines elements of CTD and the stochastic operator extrapolation method	of the companion paper.  
	For a novel index resetting stepsize policy FTD exhibits the best known convergence rate. We also devised a robust version of the algorithm  
	that is particularly suitable for discounting factors close to 1.

	\noindent {\bf Keywords:} Variational inequality, operator extrapolation, acceleration, reinforcement learning, temporal difference learning,  stochastic policy evaluation.\\
	{\bf Mathematics Subject Classification (2000):} 90C25, 90C15, 62L20, 68Q25.
\end{abstract}

\section{Introduction}   
In this paper we continue our algorithmic investigations in  variational inequality (VI) problems that originated in the companion paper \cite{GGT_20a}. We consider stochastic VI problems with inexact information, where biased estimators of the underlying operator are obtained via a stochastic oracle. For the purposes of completeness we include the formulation of the generalized monotone variational inequality (GMVI) problem:
\begin{equation}
\label{VIP}
\text{Find}~ x^* \in X: ~~~~ \langle F(x^*) , x - x^* \rangle \geq 0, ~~~ \forall x \in X,
\end{equation}
where $X \subseteq \mathbb{R}^n$ is a nonempty closed convex set. The operator  $F : X \rightarrow \mathbb{R}^n$ is a $L$-Lipschitz continuous map, i.e., for some $L > 0 $,
\begin{equation}
\label{Lipschitz}
\| F(x_1) - F(x_2) \|_*   \leq L \| x_1 - x_2 \|,~~~~ \forall x_1, x_2 \in X,
\end{equation}
and satisfies a generalized monotonicity condition 
\begin{equation} \label{G_monotone0}
\langle F(x), x - x^* \rangle \ge \mu \|x - x^*\|^2, \ \ \forall x \in X\textcolor{black}{,}
\end{equation}
for some $\mu \ge 0$. Throughout the paper we assume the existence of a solution $x^*$ to the problem \eqnok{VIP}-\eqnok{G_monotone0}, while $X^*$  denotes the set of all solutions.  A special case of interest is the generalized strongly monotone variational inequality (GSMVI) problem, consisting of \eqnok{VIP}-\eqnok{G_monotone0}, where the latter is satisfied with $\mu > 0 $. 

The stochastic oracle generates at the point $x \in X$ the  operator value
 $\tilde{F}(x, \xi)$, where  $\xi $ is a random vector, whose probability distribution is supported on a set $\Xi \subset \mathbb{R}^d$. In \cite{GGT_20a} we analyzed the setting, where one can obtain at each time instant an unbiased estimator of the operator $F$ by 
drawing independent and identically distributed (i.i.d.) samples $ \xi_1, \xi_2, \hdots, $ of the random vector $\xi$ according to a distribution  $\Pi $, 
\begin{equation}
\label{unbiased_estimate}
F(x) = \mathbb{E}[\tilde{F}(x, \xi)] = \int_{\xi \in \Xi} \tilde{F}(x, \xi) d \Pi(\xi), ~~~ \forall x \in X.
\end{equation}
In this paper we relax the i.i.d. assumption and consider $ \{\xi_{t} : t \in \mathbb{Z}_+\}$ to be a Markov process, on some underlying probablity space $(\Omega, \mathcal{F}, \mathbb{P})$ whose state space is $\Xi$. We further assume that the probability distribution $\Pi$ appearing in \eqref{unbiased_estimate} corresponds to the unique invariant distribution of $ \{\xi_{t} : t \in \mathbb{Z}_+\}$, i.e.,
$$
\Pi(A) = \int_{\xi \in \Xi} P(\xi, A)  d \Pi(\xi), ~~~~ \forall A \in \mathcal{B}(\Xi),
$$
where the transition kernel of $ \{\xi_{t} : t \in \mathbb{Z}_+\}$ is denoted by $P$.
This modification results in biased estimators of the operator $F$ posing additional challenges to the algorithmic analysis.
The scope of our investigations is naturally aligned with a wealth of stochastic approximation problems in machine learning, statistics, optimization and control, where stochastic data exhibit serial correlation,  see e.g.\textcolor{black},  \cite{benv90}, \cite{KusYin03}, \cite{Spall03}, \cite{meyn_tweedie_glynn_2009} and the references therin. In terms of mirror descent, the  case of Markovian noise has been explored in \cite{duchi2012ergodic}. 

A  problem of interest  that falls within our framework is the analysis of stochastic strongly monotone VIs for policy evaluation.
For a given linear operator $T: \bbr^n \to \bbr^n$,
the basic policy evaluation problem can be formulated as a fixed-point equation:
\beq
\label{policy_evaluation_fixed_point}
\text{Find}~ x^* \in X: ~~~~ x^* = T(x^*),
\eeq
which is a special case of \eqref{VIP} with $X = \bbr^n$ and $F(x) = x - T(x)$.  
The point $x^* \in \mathbb{R}^n$ encodes the value function corresponding to a specific policy in a Markov decision process (MDP)~\cite{puterman2014markov,bertsekas1996stochastic}. The connection between VI problems and approximate dynamic programming has been brought to the foreground in 
\cite{bertsekas_2009}. 

{ The computation of the fixed point $x^*$
	according to \eqref{policy_evaluation_fixed_point} requires knowledge of the transition kernel of the Markov process corresponding to the policy under consideration and the related reward structure. In model-free settings  where this information is not available or hard to construct, one resorts to 
	reinforcement learning (RL) algorithms \cite{bertsekas2018dynamic,Sutt18} in order to estimate this value function by relying on a stream of state-reward pairs generated by the underlying Markov process. In the context of RL, policy evaluation is  an important step of policy iteration algorithms that alternate between computing the value function for a specific policy and performing a policy improvement step until a (near) optimal one is determined \cite{konda2000actor,lagoudakis2003least}.  Hence iteration complexity results for obtaining accurate estimates of the value function  in policy evaluation is a problem of interest both from theoretical as well as practical point of view. 
}

%( {\bf GL: Here we need to mention the importance of stochastic policy evaluation in RL})

One of the most popular stochastic iterative algorithm 
used to estimate this value function is temporal difference (TD) learning introduced in \cite{sutton1988learning}. 
{We focus on the application of our algorithms in regards to the policy evaluation problem in the context of finite state MDPs with linear function approximation. Asymptotic convergence of TD with linear function approximation
	has been established in \cite{tsitsiklis_vanroy_97}. While practical situations involve the observation of a single Markovian data stream, the first attempts towards finite time convergence results focussed to what is termed the i.i.d. observation model \cite{sutton2009fast,lakshminarayanan2018linear}. In this model one can receive unbiased estimators of the underlying  operator and the analysis mirrors features of stochastic gradient descent. The challenge of Markovian noise stems from the presence of dependent data that lead to biased samples. 
	Finite time analysis of TD learning under Markovian noise has been the recent subject of \cite{russo_18}, where the authors employ nonsmooth analysis to a  variant of the traditional TD algorithm that requires a projection step to an a-priori specified Euclidean ball. Another consequence of the nonsmooth approach in \cite{russo_18} is that there is no obvious way of benefiting from the variance reduction effect of distributed/parallel computing. Our smooth analysis offers remedies to these points, while achieving the best known so far convergence rate. Additionally we provide a robust analysis for the important case when the discount factor $\beta $ is close to 1, which to our knowledge has not been addressed in the prior literature. 
}

We now discuss our contributions in the broader context of stochastic VI problems under Markovian noise before focusing to the stochastic policy evaluation problem. 
Our investigations encompass a variety of simple TD learning  type algorithms motivated by its original version that maintain its simplicity, while offering distinct advantages from a non-asymptotic analysis point of view. These algorithms are applicable in the more general setting of stochastic VIs. 

Our main contributions are summarized as follows. We start 
by analyzing the TD algorithm for stochastic VIs in a proximal setting, which only requires the update of one sequence $ \{ x_t\}$, while receiving samples $ \xi_1, \xi_2, \hdots,$ from the underlying  Markov process.
 Each iteration involves the evaluation of a potentially biased estimator 
 $ \tilde{F}(x_t,\xi_t) $ of the operator value $F(x_t)$ and updating from $x_t$ to $ x_{t+1}$  through only one projection subproblem:
$$
	x_{t+1} = \argmin_{x\in X} ~   \gamma_t \big\langle \tilde{F}(x_t,\xi_t)  , x \big\rangle + V(x_t,x),
$$
where $V : X \times X \rightarrow \mathbb{R}_+$ denotes the Bregman distance,  see also \eqref{Bregman_definition}.
In the single oracle setting we prove the 
$
\mathcal{O}( \tfrac{\tau (\bar L^2+\varsigma^2)}{\mu^2\sqrt{\epsilon}}+ 
\sqrt{\tfrac{\tau(\bar L^2+\varsigma^2)}{\mu^{3} (1-\rho)\epsilon}}
 + \tfrac{\tau (\sigma^2+\|F(x^*)\|_*^2)}{\mu^2 \epsilon}   )
$
and 
$
\mathcal{O}( \tfrac{\tau (\bar L^2+\varsigma^2)}{\mu^2}
\log{\tfrac{1}{\epsilon(1-\rho)}} + \tfrac{\tau (\sigma^2+\|F(x^*)\|_*^2)}{\mu^2 \epsilon} \log{\tfrac{1}{\epsilon}} )
$
sampling complexity bounds, by using a diminishing stepsize and a new constant stepsize policy respectively. 
The parameter $\rho$ corresponds to the covergence rate of the underlying Markov process, while
$ \tau = \mathcal{O}(
\tfrac{\log(1/\mu)}{\log(1/\rho)})$.
 The constants $\bar L$ and $\sigma$,  $\varsigma$ are  described in the problem setup (see Section~\ref{sec_problem_statement}).\footnote{Note that the  smaller Lipschitz constant $L$ instead of $\bar L$ is used in the next algorithms and related iteration complexity expressions. The parameter $\sigma^2$ is the constant variance term,  while $\varsigma^2$ is the coefficient of the state-dependent noise.} To our knowledge these complexity bounds are new for solving stochastic GSMVI problems with Markovian noise and can benefit by the use of mini-batches for variance reduction purposes when $F(x^*)=0$.
The parameter $\tau$, closely connected to the mixing time of the Markov process, affects
the stepsize selection. Our analysis of the standard TD  algorithm requires the estimation of $\tau$. This circumstance motivated us to devise a new algorithmic scheme, called conditional temporal difference (CTD) method, applying updates to the iterates in periodic intervals of length 
$ \tau$, while achieving several appealing properties. Specifically, at time instant $t$ we collect $\tau$ samples 
$\{\xi_t^1, \xi_t^2, \dots, \xi_t^\tau\}$, and use the last one to update the iterate according to
$$ 
x_{t+1} = \argmin_{x\in X} ~   \gamma_t \big\langle \tilde{F}(x_t,\xi_t^\tau)  , x \big\rangle + V(x_t, x).
$$
In the single oracle setting the CTD algorithm achieves the 
$
\mathcal{O} (  \tfrac{\tau (L^2+\varsigma^2)}{\mu^2\sqrt{\epsilon}}
+ \tfrac{\tau \sigma^2}{\mu^2 \epsilon}  )
$
and 
$
\mathcal{O} ( \tfrac{\tau (L^2+\varsigma^2)}{\mu^2}
\log{\tfrac{1}{\epsilon}} + \tfrac{\tau \sigma^2}{\mu^2 \epsilon} \log{\tfrac{1}{\epsilon}} ) 
$
sampling complexity bounds, by using a diminishing stepsize and a new constant stepsize policy respectively.  
CTD improves TD in two different aspects: a) its complexity bounds have weak dependence
on $\rho$ in particular when $ \rho $ is close to 1, and b) it uses relaxed assumptions on the
smoothness of the stochastic operators.  The real advantage of the 
CTD over the standard TD algorithm is exemplified by a new index-resetting stepsize policy 
that achieves iteration complexity of ${\cal O}( \tfrac{\tau (L^2+\varsigma^2) }{\mu^2} \log\tfrac{1}{\epsilon}+ \tfrac{\tau \sigma^2 }{\mu^2 \epsilon} ) $, which to our knowledge is the best possible, without the use of operator extrapolation. % and can benefit by the use of  mini-batches for purposes of variance reduction.  
This brings us to our next algorithm that combines the elements of CTD and the stochastic operator extrapolation (SOE) introduced in the companion paper \cite{GGT_20a}, which 
%for reasons that will become apparent we 
is referred to as fast temporal difference (FTD) method: 
$$
x_{t+1} = \argmin_{x\in X_{t+1}} ~   \gamma_t \big\langle \tilde{F}(x_t,\xi_t^\tau) + \lambda_t[\tilde{F}(x_t,\xi_t^\tau) -\tilde{F}(x_{t-1},\xi_{t-1}^\tau) ] , x \big\rangle + V(x_t, x).
$$
In this algorithm the projecting set may be time varying depending whether the feasible set is bounded or not.
We show that FTD when employed with different stepsize policies can achieve either nearly optimal or optimal complexity for stochastic GSMVI under Markovian noise in both settings.
%In the single stochastic oracle setting 
%FTD achieves the 
%$
%\mathcal{O} (  \frac{\tau L}{\mu\sqrt{\epsilon}} + \frac{\tau \sigma^2}{\mu^2 \epsilon}  ) 
%$
%and 
%$
%\mathcal{O}( \frac{\tau L}{\mu}
%\log{\frac{1}{\epsilon}} + \frac{\tau \sigma^2}{\mu^2 \epsilon} \log{\frac{1}{\epsilon}} )$.
When the feasible region is bounded by using an index-resetting stepsize policy 
FTD achieves the complexity bound
${\cal O}( \tfrac{\tau L }{\mu} \log\tfrac{V(x_1,x^*)}{\epsilon}+ \tfrac{\tau(\sigma^2+\varsigma^2 D_X^2) }{\mu^2 \epsilon} )$
which improves the one obtained for CTD in terms of the dependence on
the condition number $L/\mu$ in the first term to the optimal one.    This rate can be matched in the case of an unbounded feasible region by employing projection, while utilizing an a-priori upper bound on the size of the optimal solution. 
The claims of optimality or near optimality of specific algorithms in this paper are grounded on the following references. In the deterministic setting a lower bound on the linear rate of convergence was established in the context of solving linear systems of equations,  a special case of VIs,  in 
\cite{nemirovski1992information}.   As for the stochastic case in the  specific context of  convex optimization,  the ${\cal O}(\tfrac{1}{\epsilon})$ lower bound on the sample complexity can be found in \cite{ghad_lan_2012} and \cite{jud_nest_2014}.

To the best of our knowledge all these complexity bounds are new and can benefit from the use of mini-batches for reducing the variance and hence facilitate distributed stochastic optimization in terms of multiprocessor and multiagent parallelization.
In addition, we establish convergence of FTD for stochastic GMVI ($\mu = 0$) under Markovian noise in terms of the expected residual,
which, to our best knowledge, can not be achieved by either TD or CTD. 

 In terms of policy evaluation for MDPs, our smooth analysis of the standard TD algorithm does not require a projection step to an a-priori specified ball, while improving on the achieved convergence rate.  Furthermore the aforementioned benefits of CTD and FTD naturally carry over to this reinforcement learning problem as well. Those advantages in terms of improved iteration complexity and amenability to parallel implementation are amplified by our robust analysis of FTD. The latter analysis in terms of the expected residual, which corresponds to the expected Bellman error, is suitable for cases where the discount factor $ \beta \approx 1$, i.e., $ \mu \approx 0 $.  

\begin{comment}
We also study a nonlinear signal estimation problem involving dynamics stemming from an autoregressive process. 
The purpose of this investigation is to demonstrate the applicability of our algorithmic design in continuous state spaces as opposed to the finite state-actions spaces encountered in the policy evaluation problem. 
\end{comment}

Finally we conduct numerical experiments on the proposed algorithms in regards to policy evaluation. We use the standard 2D Grid-World example as a testbed to demonstrate the performance of our algorithms while demonstrating the advantages over the TD method requiring the projection over a bounded set.

This paper is organized as follows. The underlying assumptions to our algorithmic developments are stated in 
Section~\ref{sec_problem_statement} and elaborated by means of a nonlinear estimation problem involving autoregressive dynamics.
We discuss our novel analysis of standard TD as well as the newly developed CTD and FTD algorithms in 
Section~\ref{sec-algorithms}. In Section~\ref{MDP_RL} we demonstrate the applicability of our methods to reinforcement learning for the policy evaluation problem and compare to prior work in the literature. We report out numerical results in Section~\ref{sec-num} and complete the paper with some brief concluding remarks.

\textbf{Notation and terminology} 
For a given strongly convex function $\o$ with modulus $1$, we define the prox-function (or Bregman's distance) associated with $\o$ as
\begin{equation}
\label{Bregman_definition}
V(x,y) \equiv V_{\o}(x,y): = \o(x)-\o(y)-\la \o^{\prime}(y),x-y\ra, \quad \forall x,y \in X,
\end{equation}
where $\o^{\prime}(y)\in \partial \o(y)$  is an arbitrary subgradient of $\o$ at $y$.
Note that by the strong convexity of $\o$, we have
\beq \label{strong_convex_V}
V(x,y) \ge \tfrac{1}{2} \|x - y\|^2.
\eeq
With the definition of the Bregman's distance, we can replace \eqnok{G_monotone0} by
\begin{equation} \label{G_monotone1}
\langle F(x), x - x^* \rangle \ge 2 \mu V(x, x^*), \ \ \forall x \in X.
\end{equation}
For a given Markov process $\{ \xi_{t} : t \in \mathbb{Z}_{+}\}$ over some probability space $ (\Omega, \mathcal{F}, \mathbb{P})$  with state space $\Xi$, we denote the $\sigma$-field generated by the first  $ \xi_1, \hdots, \xi_{s}$ samples by  $ \mathcal{F}_s = \sigma( \xi_1, \hdots, \xi_{s})$. 
The distribution of $\xi_t$ conditioned on $ \mathcal{F}_s$ is denoted by $P^t_{[s]}$, i.e., for a measurable event $A \in \mathcal{B}(\Xi)$, $ P^t_{[s]}(A) = \Pr(\xi_t \in A | \mathcal{F}_s)$. 

\section{Problem statement and assumptions}   \label{sec_problem_statement}

The problem at hand is the computation of a point $x^* \in X$ such that \eqref{VIP} holds, while the operator $F$
satisfies Lipshitz-continuity \eqref{Lipschitz} and the generalized monotonicity condition \eqref{G_monotone0} for some $\mu \geq 0$. At our disposal are samples received from a Markov process $ \{\xi_{t} : t \in \mathbb{Z}_+\}$
defined on some probability space $ (\Omega, \mathcal{F},\mathbb{P})$
with state space $\Xi$. The underlying Markov process admits a unique invariant distribution $\Pi$. Our main stipulation is that when $\xi \in \Xi$ is a random vector  with distribution $\Pi$, the stochastic oracle delivers 
an unbiased estimate $\tilde{F}(x, \xi)$ of the operator $ F(x)$ in the sense of \eqref{unbiased_estimate}.
Our algorithmic developments rely on some additional assumptions.

\textit{Assumption A.} 
The maps $ \tilde{F}( \cdot, \xi)$ are Lipschitz-continuous uniformly in $\xi$, i.e., there exists 
$\tilde{L} >  0$, such that  $\forall \xi \in \Xi$, 
\begin{equation}
\label{Lipschitz_pointwise}
\| \tilde{F}(x_1, \xi) - \tilde{F}(x_2, \xi) \|_*   \leq \tilde L \| x_1 - x_2 \|,~~~~ \forall x_1, x_2 \in X.
\end{equation}
We set $\bar{L} := \max \{L,\tilde{L}\}$.  This assumption is needed only in the analysis of the TD algorithm. For the CTD and FTD algorithms the standard Lipschitz-continuity condition as in \eqref{Lipschitz}
suffices.

\textit{Assumption B.} At the point $x^* \in X$ for every $t> s \in \mathbb{Z}_+$, with probability 1,
\begin{equation} \label{unbiased_x*}
\mathbb{E}[ \tilde{F}(x^*, \xi_{t})|\mathcal{F}_{s}] = F(x^*).
\end{equation}

\textit{Assumption C.}  
There exists constants $ \sigma, \varsigma \in \mathbb{R}_+$ such that  for every $t, \tau \in \mathbb{Z}_+$ and
 iterates $\{x_s\}, $
\begin{equation} \label{variance_bound}
\mathbb{E}[ \|\tilde{F}(x_t, \xi_{t+\tau}) -  
\mathbb{E}[ \tilde{F}(x_t, \xi_{t+\tau}) | \mathcal{F}_{t-1}] \|_*^2 |\mathcal{F}_{t-1} ] \leq \tfrac{\sigma^2}{2} + \tfrac{\varsigma^2}{2}\|x_t-x^*\|^2.
\end{equation} 
Implementation of our algorithms requires estimation of $\varsigma$, but does not necessarily require 
estimation of  $\sigma$.

\textit{Assumption D.}   There exists  constants $C > 0$ and $ \rho \in (0,1)$, such that for 
 every $t, \tau \in \mathbb{Z}_+$ and \textcolor{black}{$x \in X$}, with probability 1,
\begin{equation} \label{bound_delta_inner_lhs}
\| F(x) - \mathbb{E}[\tilde{F}(x, \xi_{t+\tau})|\mathcal{F}_{t-1}] \|_*  \leq C\rho^\tau \|x-x^*\|.
\end{equation}
As a consequence of this assumption it follows that with probability 1, 
\begin{equation} \label{bound_delta_inner1}
 \langle F(x) - \mathbb{E}[\tilde{F}(x, \xi_{t+\tau})|\mathcal{F}_{t-1}], x -x^*\rangle  \leq C\rho^\tau \|x-x^*\|^2.
\end{equation}
Our algorithms exhibit only logarithmic dependence on $\rho$ and $C$ (see \eqref{def_tau}).
%(see$ O( \tfrac{\log(C)}{\log(1/\rho)})$ dependence in \eqref{def_tau}.  
As such only some rough estimation of these parameters is sufficient. The parameter $\rho$ relates to the convergence rate of the underlying Markov chain,  i.e. how fast the chain approaches its stationary distribution. Estimation of the 	
convergence rate, as well as the related notion of mixing time, i.e. the number of steps required for the Markov chain to be within a fixed threshold of its stationary distribution has been the topic of active research. Nontrivial confidence intervals for the reversible case can be found in \cite{everybody2019}. The more challenging and prevalent case when the underlying Markov chain is non-reversible
is addressed in 
\cite{wolfer2019estimating}. 

Moreover,  whenever there is a-priori knowledge in terms of a positive lower bound to the multi-step transition probabilities to a specific state, one can build estimates for $\rho$ and $C$ via the Doeblin minorization condition, see \cite{rosenthal95} and the references therein.

Below are two useful results for the analysis of our algorithms.

\begin{lemma} \label{bound_delta_norm}
Let \textit{Assumptions C} and \textit{D} hold. For every $t,\tau \in \mathbb{Z}_+$ and  sequence  of iterates  $\{x_t\} $, 
\beq \label{bound_delta_norm1}
\mathbb{E}[\| F(x_t) - \tilde{F}(x_t, \xi_{t+\tau}) \|_*^2 ] \le \sigma^2 + (2C^2\rho^{2\tau} + \varsigma^2 )\mathbb{E}[\|x_t-x^*\|^2] .
\eeq 
\begin{comment}
Especially, when $\|\cdot\|=\|\cdot\|_*=\|\cdot\|_2$, we have
\beq \label{bound_delta_norm2}
\mathbb{E}[\| F(x_t) - \tilde{F}(x_t, \xi_{t+\tau}) \|_*^2 ] \le \tfrac{\sigma^2}{2} + C^2\rho^{2\tau} \mathbb{E}[\|x_t-x^*\|^2] .
\eeq
\end{comment}
\end{lemma}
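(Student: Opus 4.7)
The plan is to decompose the error into a bias term (controlled by Assumption D) and a conditional fluctuation term (controlled by Assumption C), and combine them via the elementary inequality $(a+b)^2 \le 2a^2 + 2b^2$. Because the excerpt works with general (not necessarily inner-product) norms $\|\cdot\|_*$, I cannot rely on a Pythagorean orthogonality to save a factor of two, so the factor $2C^2\rho^{2\tau}$ in the target bound is precisely what this crude splitting produces.

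First, I would write, with $x_t$ being $\mathcal{F}_{t-1}$-measurable,
\[
F(x_t) - \tilde F(x_t,\xi_{t+\tau}) \;=\; \bigl[F(x_t) - \mathbb{E}[\tilde F(x_t,\xi_{t+\tau})\mid\mathcal{F}_{t-1}]\bigr] \;+\; \bigl[\mathbb{E}[\tilde F(x_t,\xi_{t+\tau})\mid\mathcal{F}_{t-1}] - \tilde F(x_t,\xi_{t+\tau})\bigr].
\]
Applying the triangle inequality in $\|\cdot\|_*$ and squaring yields
\[
\|F(x_t) - \tilde F(x_t,\xi_{t+\tau})\|_*^2 \;\le\; 2\|F(x_t) - \mathbb{E}[\tilde F(x_t,\xi_{t+\tau})\mid\mathcal{F}_{t-1}]\|_*^2 + 2\|\tilde F(x_t,\xi_{t+\tau}) - \mathbb{E}[\tilde F(x_t,\xi_{t+\tau})\mid\mathcal{F}_{t-1}]\|_*^2.
\]

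Next I would take conditional expectation given $\mathcal{F}_{t-1}$. The first term on the right is already $\mathcal{F}_{t-1}$-measurable and by Assumption D (equation \eqnok{bound_delta_inner_lhs}) is bounded a.s.\ by $C^2\rho^{2\tau}\|x_t-x^*\|^2$. The conditional expectation of the second term is bounded, by Assumption C (equation \eqnok{variance_bound}), by $\sigma^2 + \varsigma^2\|x_t-x^*\|^2$ after accounting for the factor of two already pulled out. Adding these and then taking total expectation produces exactly $\sigma^2 + (2C^2\rho^{2\tau} + \varsigma^2)\,\mathbb{E}[\|x_t-x^*\|^2]$, as claimed.

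There is no real obstacle here; the only subtlety worth noting is that the statement implicitly relies on $x_t$ being $\mathcal{F}_{t-1}$-measurable, which is the case for the algorithmic iterates considered later (where $x_t$ is determined by $\xi_1,\ldots,\xi_{t-1}$). This measurability is what lets the bias term $F(x_t)-\mathbb{E}[\tilde F(x_t,\xi_{t+\tau})\mid\mathcal{F}_{t-1}]$ be treated as deterministic after conditioning, so that Assumption D applies directly with $x=x_t$.
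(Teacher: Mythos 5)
Your proof is correct and follows essentially the same route as the paper's: the same bias--fluctuation decomposition, the same triangle-plus-Young splitting yielding the factor $2$, and the same invocation of Assumptions D and C on the respective pieces before taking total expectation. The measurability remark you add is a reasonable clarification but does not change the argument.
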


\begin{proof} 
	By the triangle inequality, we have 
	\begin{align*}
	\| F(x_t) - \tilde{F}(x_t, \xi_{t+\tau}) \|_* \le \| F(x_t) - \bbe[\tilde{F}(x_t, \xi_{t+\tau})|\mathcal{F}_{t-1}] \|_* + \| \bbe[\tilde{F}(x_t, \xi_{t+\tau})|\mathcal{F}_{t-1}] -  \tilde{F}(x_t, \xi_{t+\tau})\|_*.
	\end{align*}
	Taking squares on both sides  and applying Young's inequality, we obtain
	\begin{align}\label{splitting_terms}
	\| F(x_t) - \tilde{F}(x_t, \xi_{t+\tau}) \|_*^2  
	 \le 2\| F(x_t) - \bbe[\tilde{F}(x_t, \xi_{t+\tau})|\mathcal{F}_{t-1}] \|_*^2 + 2\| \bbe[\tilde{F}(x_t, \xi_{t+\tau})|\mathcal{F}_{t-1}] -  \tilde{F}(x_t, \xi_{t+\tau})\|_*^2.
	\end{align}
By taking \eqnok{bound_delta_inner_lhs} into account it follows that, 
\begin{align*}
\| F(x_t) - \tilde{F}(x_t, \xi_{t+\tau}) \|_*^2  
\le  2C^2\rho^{2\tau}\|x_t-x^*\|^2+ 2\| \bbe[\tilde{F}(x_t, \xi_{t+\tau})|\mathcal{F}_{t-1}] -  \tilde{F}(x_t, \xi_{t+\tau})\|_*^2.
\end{align*}
Relation \eqref{bound_delta_norm1} is obtained by applying \eqnok{variance_bound} and subsequently taking expectations.
\begin{comment}
For the special case  when $\|\cdot\|=\|\cdot\|_*=\|\cdot\|_2$, we first note that 
\begin{align*}
\| F(x_t) - \tilde{F}(x_t, \xi_{t+\tau}) \|_2^2 &= \| F(x_t) - \bbe[\tilde{F}(x_t, \xi_{t+\tau})|\mathcal{F}_{t-1}]\|_2^2 + \| \bbe[\tilde{F}(x_t, \xi_{t+\tau})|\mathcal{F}_{t-1}] -  \tilde{F}(x_t, \xi_{t+\tau})\|_2^2 \\
&\quad+ 2\langle F(x_t) - \bbe[\tilde{F}(x_t, \xi_{t+\tau})|\mathcal{F}_{t-1}],  \bbe[\tilde{F}(x_t, \xi_{t+\tau})|\mathcal{F}_{t-1}] -  \tilde{F}(x_t, \xi_{t+\tau})\rangle.
\end{align*}
By taking conditional expectations on both sides of the inequality, we obtain the desired result.
\end{comment}
\end{proof}
%\begin{align*}
%\mathbb{E}[\| F(x) - \tilde{F}(x, \xi_{t+\tau}) \|_*^2 ||\mathcal{F}_{t-1}] 
%&= \| F(x) - \bbe[\tilde{F}(x, \xi_{t+\tau})|\mathcal{F}_{t-1}] \|_2^2 + \mathbb{E}[\| \bbe[\tilde{F}(x, \xi_{t+\tau})|\mathcal{F}_{t-1}]-  \tilde{F}(x, \xi_{t+\tau})\|_2^2]\\
%&\quad+ 2\langle F(x) - \bbe[\tilde{F}(x, \xi_{t+\tau})|\mathcal{F}_{t-1}],  \bbe[\tilde{F}(x, \xi_{t+\tau})|\mathcal{F}_{t-1}] -  \bbe[\tilde{F}(x, \xi_{t+\tau})|\mathcal{F}_{t-1}]\rangle\\
% \le C^2\rho^{2\tau}\|x-x^*\|^2 +\tfrac{\sigma^2}{2}.
%\end{align*}

\begin{lemma} \label{bound_delta_norm_0}
	Let \textit{Assumptions A, B} and \textit{C} hold. For every $t \in \mathbb{Z}_+$ and   sequence of iterates  $\{x_s\}, $ 
	\beq \label{bound_delta_norm0}
	\mathbb{E}[\| F(x_t) - \tilde{F}(x_t, \xi_{t}) \|_*^2 ] \le \sigma^2 + ( \varsigma^2+8\bar{L}^2)\   \mathbb{E}[ \|x_t-x^*\|^2] .
	\eeq
\end{lemma}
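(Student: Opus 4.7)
The strategy parallels the proof of Lemma~\ref{bound_delta_norm}, but with one crucial modification: since we are now evaluating the oracle at the current index (i.e.\ effectively $\tau=0$), Assumption D gives no contraction ($\rho^0 = 1$), so I cannot directly recycle the previous bound. Instead, I will exploit Assumptions A and B to control the bias in terms of $\bar L$ rather than $C$.

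First I would split the error into a conditional mean (``bias'') term and a fluctuation (``noise'') term with respect to $\mathcal{F}_{t-1}$:
\[
F(x_t) - \tilde{F}(x_t,\xi_t) = \bigl[F(x_t) - \mathbb{E}[\tilde{F}(x_t,\xi_t)\mid\mathcal{F}_{t-1}]\bigr] + \bigl[\mathbb{E}[\tilde{F}(x_t,\xi_t)\mid\mathcal{F}_{t-1}] - \tilde{F}(x_t,\xi_t)\bigr].
\]
Squaring and applying Young's inequality with factor $2$ separates the treatment of the two pieces. The fluctuation piece is handled immediately by Assumption C with $\tau=0$, yielding the $\sigma^2 + \varsigma^2\|x_t - x^*\|^2$ contribution after multiplying by $2$.

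For the bias piece, the key trick is to insert $F(x^*)$ and invoke Assumption B (with $s=t-1$, $t=t$) to rewrite $F(x^*) = \mathbb{E}[\tilde{F}(x^*,\xi_t)\mid\mathcal{F}_{t-1}]$. That gives
\[
F(x_t) - \mathbb{E}[\tilde{F}(x_t,\xi_t)\mid\mathcal{F}_{t-1}] = \bigl[F(x_t)-F(x^*)\bigr] + \mathbb{E}\bigl[\tilde{F}(x^*,\xi_t) - \tilde{F}(x_t,\xi_t)\,\big|\,\mathcal{F}_{t-1}\bigr].
\]
Applying the triangle inequality, then Jensen's inequality to push the norm inside the conditional expectation of the second summand, and finally the Lipschitz bounds \eqref{Lipschitz} and \eqref{Lipschitz_pointwise}, I obtain a pointwise estimate $\|F(x_t)-\mathbb{E}[\tilde{F}(x_t,\xi_t)\mid\mathcal{F}_{t-1}]\|_* \le (L+\tilde L)\|x_t-x^*\| \le 2\bar L\|x_t-x^*\|$. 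Squaring contributes $4\bar L^2\|x_t-x^*\|^2$, and the Young factor of $2$ turns this into the advertised $8\bar L^2\|x_t-x^*\|^2$.

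Combining the two contributions and taking an outer expectation gives \eqref{bound_delta_norm0}. The only nonroutine step is the bias handling — specifically, the idea of substituting $F(x^*)$ via Assumption B so that the Lipschitz property \eqref{Lipschitz_pointwise} can be deployed on the two evaluations at $\xi_t$; without this substitution one would be forced to use Assumption D with $\rho^0=1$ and would end up with a $C^2$ coefficient instead of the desired $\bar L^2$.
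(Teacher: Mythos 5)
Your proof is correct and follows essentially the same route as the paper's: split into bias and fluctuation via Young's inequality, handle the fluctuation with Assumption C, and control the bias by inserting $F(x^*)$ through Assumption B so that Lipschitz continuity applies. The only cosmetic difference is in extracting the constant $2\bar L$ for the bias term — you bound the two summands separately via the triangle inequality (getting $L+\tilde L$), whereas the paper combines them into a single integral against $\Pi - P^t_{[t-1]}$ and invokes the total-variation bound of $2$ together with \eqref{Lipschitz_pointwise}; both yield the same $8\bar L^2$ coefficient.
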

\begin{proof} 
	By letting $\tau=0$ in \eqnok{splitting_terms},  we obtain
	\begin{align}\label{splitting_terms2}
	\| F(x_t) - \tilde{F}(x_t, \xi_{t}) \|_*^2  
	 \le 2\| F(x_t) - \bbe[\tilde{F}(x_t, \xi_{t})|\mathcal{F}_{t-1}] \|_*^2 + 2\| \bbe[\tilde{F}(x_t, \xi_{t})|\mathcal{F}_{t-1}] -  \tilde{F}(x_t, \xi_{t})\|_*^2.
	\end{align}
The term  $\| F(x_t) - \bbe[\tilde{F}(x_t, \xi_{t})|\mathcal{F}_{t-1}] \|_*$ is upper bounded as 
	\begin{align*}
	\| F(x_t) - \bbe[\tilde{F}(x_t, \xi_{t})|\mathcal{F}_{t-1}] \|_* 
	 & =\| F(x_t) - F(x^*) + \bbe[\tilde{F}(x^*, \xi_{t})|\mathcal{F}_{t-1}] - \bbe[\tilde{F}(x_t, \xi_{t})|\mathcal{F}_{t-1}] \|_*\\
	& = 	\| \bbe_{\Pi}[\tilde F(x_t,\xi)-\tilde F(x^*,\xi)]- \bbe[\tilde{F}(x_t, \xi_{t})-\tilde{F}(x^*, \xi_{t})|\mathcal{F}_{t-1}] \|_*\\
 & \le \int_{\xi \in \Xi} \|\tilde{F}(x_t, \xi)-\tilde{F}(x^*, \xi)\|_* |d \Pi(\xi)-d P^t_{[t-1]}(\xi)| \le 2\bar{L}\|x_t-x^*\|. 
%	& \le \bar{L}\|x-x^*\|\int_{\xi \in \Xi}  |d \Pi(\xi)-d P^t_{[t-1]}(\xi)|\\
	\end{align*}
\begin{comment}
	Together with the fact that $\bbe_{\Pi}[\tilde F(x_t,\xi)]=F(x_t)$, the term $\| F(x_t) - \bbe[\tilde{F}(x_t, \xi_{t})|\mathcal{F}_{t-1}] \|_*$ is upper bounded as
	\begin{align*}
	\| F(x_t) - \bbe[\tilde{F}(x_t, \xi_{t})|\mathcal{F}_{t-1}] \|_* 
	%=\| F(x) - F(x^*) + \bbe[\tilde{F}(x^*, \xi_{t})|\mathcal{F}_{t-1}] - \bbe[\tilde{F}(x, \xi_{t})|\mathcal{F}_{t-1}] \|_*\\
	%& = 	\| \bbe_{\Pi}[\tilde F(x,\xi)-\tilde F(x^*,\xi)]- \bbe[\tilde{F}(x, \xi_{t})-\tilde{F}(x^*, \xi_{t})|\mathcal{F}_{t-1}] \|_*\\
 \le \int_{\xi \in \Xi} \|\tilde{F}(x_t, \xi)-\tilde{F}(x^*, \xi)\|_* |d \Pi(\xi)-d P^t_{[t-1]}(\xi)| \le 2\bar{L}\|x_t-x^*\|.
	%& \le \bar{L}\|x-x^*\|\int_{\xi \in \Xi}  |d \Pi(\xi)-d P^t_{[t-1]}(\xi)|\\
	\end{align*}
\end{comment}
	The above derivation utilizes \eqnok{Lipschitz_pointwise}, \eqnok{unbiased_x*}, and the boundedness of the total variation distance. By taking  \eqnok{splitting_terms2} into account we obtain the desired result. 
\end{proof}

Next we present an example where we discuss the aforementioned assumptions. 

\subsection*{Signal estimation and generalized linear models} \label{arkadi examples}

We revisit the nonlinear signal estimation problem involving generalized linear models (GLMs) that we
considered in \cite{GGT_20a} 
(see also Chapter 5.2 of  \cite{ArkadiBook2020}).  The setup is the same with one notable difference.
In the spirit of the current setting we relax the i.i.d. assumption on the regressors $ \{ \eta_t : t\in \mathbb{Z}_+\}$, who  now form a Markov process. Specifically we assume that the regressor sequence is generated by 
an autoregressive process of the form
$$
\eta_{t+1} = B \eta_t + \epsilon_t, ~~ t= 1, 2, \hdots,~ \eta_1 = 0,~ \epsilon_t \in \mathbb{R}^n, $$
where $ \epsilon_t$ is an i.i.d. sequence of  zero-mean random vectors with finite support,
finite moments of all orders, and
covariance matrix  $ \mathbb{E}[\epsilon_t \epsilon_t^{\T} ] = Q $.  Furthermore the matrix $B$ is stable,  i.e.
$\sigma(B) < 1 $,  where $ \sigma(B) = \max \{ |\lambda| ,   \lambda ~ \text{eigenvalue of}~ B \}$ denotes the spectral radius of $B$.
 A sequence of regressor-label observations 
$
\xi^K = \{  \xi_k = (\eta_k, y_k), 1 \leq k \leq K  \}
$
is generated according to a distribution 
$P_{x^*}$, where $x^* \in \mathcal{X} \subset 
\mathbb{R}^n$ is an unknown signal lying in a convex compact set. 
At each instant $t$, 
$
\mathbb{E}[y_t | \eta_t] = f(\eta_t^{\T}  x^*),
$
where  $f: \mathbb{R} \rightarrow \mathbb{R}$ is a Lipschitz continuous link function.
The inference problem for $x^*$ admits a VI formulation, the relevant operators are  
$$
F(x) = \mathbb{E}[\eta f(\eta^{\T}  x) ]-
\mathbb{E}[\eta f(\eta^{\T}  x^*)], ~~
\tilde{F}(x, \xi_t) = \eta_t f(\eta_t^{\T}  x) - \eta_t y_t.
$$
In the above expression for $F(x)$, 
the r.v. $\eta$ is distributed according to the stationary distribution of the autoregressive process.
As a consequence of the fact that $B$ is a stable matrix, it follows that the trajectory of  $ \{ \eta_t : t\in \mathbb{Z}_+\}$ remains bounded, while its covariance matrix $ X_t = \mathbb{E}[\eta_t \eta_t^{\T}]$ reaches a steady state value $\lim_{t \rightarrow \infty} X_t = X$. The covariance dynamics are given by 
$$
X_{t+1} = B X_t B^{\T}+  Q, ~ t= 1, 2, \hdots,\text{where} ~ 
 X =  B X B^{\T}+  Q.
$$
With this in mind we note that Assumption A  follows from the boundedness of the regressor  trajectory and the Lipschitz continuity of the link function. Assumption B is a consequence of 
$
\mathbb{E}[y_t | \eta_t] = f(\eta_t^{\T}  x^*),
$
note also that $ F(x^*) = 0 $.
Assumption C follows again from the fact that $ \{ \eta_t : t\in \mathbb{Z}_+\}$ remains bounded. 
As for Assumption D, let $\pi$ and $p^{t+\tau}_{[t-1]}$ denote the densities of $\Pi$ and  $P^{t+\tau}_{[t-1]}$, respectively. Note that
\begin{align*}
\| F(x) - \bbe[\tilde{F}(x, \xi_{t+\tau}) | \mathcal{F}_{t-1}]\| =& \| F(x) - \bbe[\tilde{F}(x, \xi_{t+\tau}) | \mathcal{F}_{t-1}] - \big(F(x^*) - \bbe[\tilde{F}(x^*, \xi_{t+\tau}) | \mathcal{F}_{t-1}]\big)\|\\
=&\bigg\| \int_{\xi \in \Xi}\big(\tilde{F}(x, \xi)-\tilde{F}(x^*, \xi)\big)\big( \pi(\xi)- p^{t+\tau}_{[t-1]}(\xi) \big)d\mu(\xi)\bigg\|\\
  \le& \int_{\xi \in \Xi} \|\tilde{F}(x, \xi)-\tilde{F}(x^*, \xi)\|| \pi(\xi)- p^{t+\tau}_{[t-1]}(\xi) |d\mu(\xi)
\le \tilde{L}\|x-x^*\| ~ d_{TV}(\Pi, P^{t+\tau}_{[t-1]}).
\end{align*}
In view of this,
starting from a given $\eta_t$ at time $t$ the mean of the regressor converges to zero with a rate $\sigma(B)$, while 
the covariance matrix converges with $\sigma(B)^2$.  Given the prior calculations condition  \eqref{bound_delta_inner_lhs}
follows from proposition 2.1 in \cite{DMR_20_total_variation_gaussian}, that shows the convergence of the total variation distance $  d_{TV}(\Pi, P^{t+\tau}_{[t-1]})$ at a geometric rate.

\section{Algorithms for stochastic variational inequalities with Markovian noise}  \label{sec-algorithms}

The algorithmic schemes we consider are simple in the sense that they involve a single sequence of iterates $\{x_t\}$ along with at most two sequences of nonnegative parameters $ \{ \gamma_t\} $,  $\{ \lambda_t\}$
and a prox-function $V : X \times X \rightarrow \mathbb{R}$. The parameters  $\{ \lambda_t\}$, when employed, define the extrapolation step, while the parameters $\{ \gamma_t \} $  can be viewed as step-sizes. The sequence $ \{\xi_t \}$ represents a single draw from the underlying  Markov process.

\subsection{Temporal difference algorithm}   

We start by analyzing the TD algorithm in the proximal setting, which is essentially the stochastic projected gradient/operator method for VI
under the Markovian \textcolor{black}{noise} setting. Notice that we do not require the feasible set $X$ to be bounded.

\begin{algorithm}[H]  \caption{Temporal Difference Algorithm}  
	\label{alg:TD}
	\begin{algorithmic} 
		\STATE{Let $ x_1 \in X$, and the nonnegative parameters $\{ \gamma_t\}$ be given. }
		\FOR{$ t = 1, \ldots, k$}
		\STATE{
		\beq \label{TD_step}
		%x_{t+1} = x_t - \gamma_t \tilde{F}(x_t,\xi_t).\\
		x_{t+1} = \argmin_{x\in X} ~   \gamma_t \big\langle \tilde{F}(x_t,\xi_t)  , x \big\rangle + V(x_t,x).
		\eeq}
		%		to obtain output $x_{t+1}$.
		\ENDFOR
		%		\RETURN $x_{k+1}$.
	\end{algorithmic}
\end{algorithm} 
Before we analyze the convergence behavior of the TD method,
we discuss a few different termination criteria for the VI problem in \eqnok{VIP}.
If $F$ satisfies the generalized strong monotonicity condition in \eqnok{G_monotone1} for some $\mu> 0$, then
the distance to the optimal solution $V(x_k, x^*)$
will be a natural termination criterion. Otherwise, when $ \mu = 0$, the termination 
criterion will be based on the residual. 
To this end, see also Section 3.8.2 of~\cite{LanBook2020},
let us denote the normal cone of $X$ at $\bar x$ by
\beq \label{def_N_X}
N_X(\bar x) := \{y \in \bbr^n | \langle y, x - \bar x\rangle \le 0, \forall x \in X \}.
\eeq
Noting that $\bar x \in X$ is an optimal solution for problem~\eqnok{VIP} if and only if $F(\bar x) \in -N_X(\bar x)$,
we define the residual of $\bar x$ as
\beq \label{def_res}
\res(\bar x) := \min_{y \in - N_X(\bar x)} \|y - F(\bar x)\|_*.
\eeq
In particular, if $X = \bbr^n$, then $N_X(\bar x) = \{0\}$ and $\res(\bar x) = \|F(\bar x)\|_*$, which is exactly
the residual of solving the nonlinear equation $F(\bar x) = 0$.

The following lemma, often referred to as the ``three-point lemma", characterizes the optimality condition of problem~\eqnok{TD_step}.
\begin{lemma} \label{lemma_wo_projection}
	Let $x_{t+1}$ be defined in \eqnok{TD_step}. Then,
	\beq \label{opt_deter_step1}
	\gamma_t \langle  \tilde{F}(x_t,\xi_t) , x_{t+1} - x \rangle + V(x_t,x_{t+1}) \le V(x_t,x) - V(x_{t+1},x), ~~\forall x \in X.
	\eeq
\end{lemma}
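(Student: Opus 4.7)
The plan is to derive the inequality directly from the first-order optimality condition of the proximal subproblem defining $x_{t+1}$, combined with the classical three-point identity for Bregman distances. Since the optimization in \eqref{TD_step} is over a closed convex set $X$ with a strongly convex objective (the Bregman term $V(x_t,\cdot)$ supplies strong convexity), $x_{t+1}$ is unique and characterized by a variational inequality involving subgradients of $\omega$.

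First I would write down the optimality condition. Recall $V(x_t,x) = \omega(x) - \omega(x_t) - \langle \omega'(x_t), x - x_t\rangle$, so $\nabla_x V(x_t,x) = \omega'(x) - \omega'(x_t)$ (using any measurable selection of subgradients when $\omega$ is nonsmooth, with the usual convention). The necessary and sufficient optimality condition for the minimizer $x_{t+1}$ of the convex problem \eqref{TD_step} over $X$ is
\begin{equation*}
\langle \gamma_t \tilde F(x_t,\xi_t) + \omega'(x_{t+1}) - \omega'(x_t), x - x_{t+1}\rangle \ge 0, \qquad \forall x \in X.
\end{equation*}

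Next I would invoke the standard three-point identity for Bregman distances, which follows by directly expanding each term from the definition \eqref{Bregman_definition}:
\begin{equation*}
V(x_t,x) - V(x_{t+1},x) - V(x_t,x_{t+1}) = \langle \omega'(x_{t+1}) - \omega'(x_t), x - x_{t+1}\rangle, \qquad \forall x \in X.
\end{equation*}
Substituting this identity into the optimality condition and rearranging yields exactly
\begin{equation*}
\gamma_t \langle \tilde F(x_t,\xi_t), x_{t+1} - x\rangle + V(x_t,x_{t+1}) \le V(x_t,x) - V(x_{t+1},x),
\end{equation*}
which is the desired bound.

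There is no real obstacle here: the statement is the standard three-point lemma for Bregman proximal updates, and the only mildly delicate points are (i) justifying the use of subgradients of $\omega$ in the optimality condition (handled by convexity and the existence of selections already implicit in the definition \eqref{Bregman_definition}), and (ii) verifying the three-point identity by a direct expansion, which is a routine calculation.
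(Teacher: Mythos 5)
Your proof is correct and is exactly the standard argument the paper relies on: the paper states this as the well-known ``three-point lemma'' without proof (deferring to Lemma 3.1 of \cite{LanBook2020}), and your combination of the first-order optimality condition with the Bregman three-point identity is precisely that argument. The only delicate point — using the same subgradient selection $\o'(x_{t+1})$ in both the optimality condition and the definition of $V(x_{t+1},\cdot)$ — is one you already flag and handle appropriately.
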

Henceforth we will also make use of the contraction property of the iterates  $ \{x_t\}$ as stated in the following lemma, a proof of which can be found in Lemma 6.5 in  \cite{LanBook2020}. 
%({\bf GL: this result can be found from  Lemma 6.5 in my book
%as it is a consequence of the Lipschitz continuity of the prox-mapping. No need to write a proof}).
\begin{lemma} \label{contraction_TD}
	Let $x_{t+1}$ be defined in \eqnok{TD_step}. Then,
	\beq \label{contraction_TD_step}
	\|x_{t+1}-x_t\|\le \gamma_t\|\tilde F(x_t,\xi_t)\|_*.
	\eeq
\end{lemma}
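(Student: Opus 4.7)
The plan is to derive the contraction estimate directly from the three-point inequality of Lemma~\ref{lemma_wo_projection} applied at the test point $x = x_t$, combined with strong convexity of the prox-function.

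First, I would instantiate \eqref{opt_deter_step1} with $x = x_t$. Since $V(x_t,x_t) = 0$, this yields
\[
\gamma_t \langle \tilde F(x_t,\xi_t), x_{t+1} - x_t\rangle + V(x_t,x_{t+1}) \le -V(x_{t+1},x_t),
\]
so after rearrangement
\[
V(x_t,x_{t+1}) + V(x_{t+1},x_t) \le \gamma_t \langle \tilde F(x_t,\xi_t), x_t - x_{t+1}\rangle.
\]

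Next I would bound the right-hand side by Cauchy--Schwarz (or the definition of the dual norm) to obtain $\gamma_t \|\tilde F(x_t,\xi_t)\|_* \|x_{t+1}-x_t\|$, and the left-hand side from below using the strong convexity bound \eqref{strong_convex_V} applied twice, giving
\[
V(x_t,x_{t+1}) + V(x_{t+1},x_t) \ge \tfrac{1}{2}\|x_{t+1}-x_t\|^2 + \tfrac{1}{2}\|x_{t+1}-x_t\|^2 = \|x_{t+1}-x_t\|^2.
\]

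Combining the two yields $\|x_{t+1}-x_t\|^2 \le \gamma_t \|\tilde F(x_t,\xi_t)\|_* \|x_{t+1}-x_t\|$, and dividing by $\|x_{t+1}-x_t\|$ (with the trivial case $x_{t+1}=x_t$ handled separately) gives \eqref{contraction_TD_step}. I do not anticipate any real obstacle here; the only subtlety is the symmetric use of strong convexity that produces the full $\|x_{t+1}-x_t\|^2$ on the left, rather than just half of it, which is what allows the square root to be taken cleanly without losing a constant factor.
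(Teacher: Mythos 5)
Your proof is correct and is essentially the standard argument: instantiating the three-point inequality at $x=x_t$ yields exactly $V(x_t,x_{t+1})+V(x_{t+1},x_t)\le \gamma_t\langle \tilde F(x_t,\xi_t),x_t-x_{t+1}\rangle$, which is the same inequality the paper's cited proof (Lemma 6.5 of \cite{LanBook2020}) obtains from the first-order optimality condition, and the remaining steps (strong convexity applied to both Bregman terms, the dual-norm bound, and division by $\|x_{t+1}-x_t\|$) coincide. No gap.
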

\begin{comment}
\begin{proof} 
	By the first-order optimality condition, we have
	\begin{align*}
	\langle \tilde F(x_{t},\xi_t) +\tfrac{1}{\gamma_t} [\nabla \o(x_{t+1}) - \nabla \o(x_{t})] , x_t - x_{t+1} \rangle \ge 0.
	\end{align*}
	Rearranging the terms, we obtain
	\begin{align*}
	\langle \nabla \o(x_{t+1}) - \nabla \o(x_{t}) , x_t - x_{t+1} \rangle &\le \gamma_t\langle \tilde F(x_{t},\xi_t)   , x_t - x_{t+1} \rangle\\
	& \le \gamma_t\|\tilde F(x_{t},\xi_t)  \|_*\|x_t-x_{t+1}\|.
	\end{align*}
	The second relationship follows from Cauchy-Schwarz inequalities. Together with the fact that
	\begin{align*}
	\langle \nabla \o(x_{t+1}) - \nabla \o(x_{t}) , x_t - x_{t+1} \rangle=V(x_t,x_{t+1})+V(x_{t+1},x_t)\ge \|x_t-x_{t+1}\|^2,
	\end{align*}
	we see
	\begin{align*}
	\|x_t-x_{t+1}\|^2 \le \gamma_t\|\tilde F(x_{t},\xi_t)  \|_*\|x_t-x_{t+1}\|.
	\end{align*}
	The desired result follows by dividing $\|x_t-x_{t+1}\|$ on both sides.
\end{proof}
\end{comment}
% ({\bf GL: Conjugate norms are missing in the following result.})

\begin{proposition}\label{lemma_dist1}
	{\color{black}Let \textit{Assumption A} hold.} Let $\{x_t\}$ be generated according to Algorithm \ref{alg:TD},
	then for any $x \in X$, $\kappa \in [t-1]$,
	\begin{align*}
	%(\frac{1}{2}-\gamma_t^2\bar{L}^2)\|x_{t+1}-x\|^2+\gamma_t \langle F(x_{t+1}),x_{t+1}-x \rangle + \gamma_t\langle\tilde{F}(x_{t-\tau},\xi_t)-F(x_{t-\tau}) , x_{t-\tau}-x\rangle \\
	%\leq \frac{1}{2}\|x_t-x\|^2 +\gamma_t^2\|\tilde{F}(x_t,\xi_t)-F(x_{t}) \|^2+ \tsum_{t' = t-\tau}^{t-1}\gamma_t\gamma_{t'}(\|\tilde{F}(x_t,\xi_t)-F(x_{t}) \|^2 + \frac{3\bar{L}^2}{2}\|x_{t'}-x\|^2\\+\frac{3}{2}\|\tilde{F}(x,\xi_{t'})\|^2+2\bar{L}^2\|x_{t-\tau}-x\|^2).
	&(1-2\gamma_t^2\bar{L}^2)V(x_{t+1},x)  +  \gamma_t \langle F(x_{t+1}),x_{t+1}-x \rangle + \gamma_t\langle\tilde{F}(x_{t-\kappa},\xi_t)-F(x_{t-\kappa}) , x_{t-\kappa}-x\rangle \\
	& \leq  V(x_t,x) +\gamma_t^2\|\tilde{F}(x_t,\xi_t)-F(x_{t}) \|_*^2+ \tsum_{t' = t-\kappa}^{t-1}\gamma_t\gamma_{t'} \big(\|\tilde{F}(x_t,\xi_t)-F(x_{t}) \|_*^2 \\& \quad +  \tfrac{3\bar{L}^2}{2}\|x_{t'}-x\|^2 +\tfrac{3}{2}\|\tilde{F}(x,\xi_{t'})\|_*^2+2\bar{L}^2\|x_{t-\kappa}-x\|^2 \big).
	\end{align*}
\end{proposition}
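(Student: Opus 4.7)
The proof starts from the three-point lemma (Lemma~\ref{lemma_wo_projection}) applied to the TD update \eqnok{TD_step}, which gives
\[
\gamma_t \la \tilde F(x_t,\xi_t), x_{t+1}-x\ra + V(x_t,x_{t+1}) \le V(x_t,x) - V(x_{t+1},x).
\]
To produce the term $\gamma_t\la F(x_{t+1}),x_{t+1}-x\ra$ in the claim, I would insert the identity $\tilde F(x_t,\xi_t) = F(x_{t+1}) + [F(x_t)-F(x_{t+1})] + [\tilde F(x_t,\xi_t)-F(x_t)]$, and further rewrite the last bracket via the backward-shift
\[
\tilde F(x_t,\xi_t) - F(x_t) = [\tilde F(x_{t-\kappa},\xi_t)-F(x_{t-\kappa})] + [\tilde F(x_t,\xi_t)-\tilde F(x_{t-\kappa},\xi_t)] - [F(x_t)-F(x_{t-\kappa})].
\]
The first bracket yields the desired noise term once paired with $x_{t-\kappa}-x$; the remaining two brackets are noise-free and each is controlled by $\bar L\|x_t-x_{t-\kappa}\|$ via Assumption~A and the Lipschitz continuity of $F$.

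\textbf{Telescoping the position increment.} To bring every inner product into the required form, I would split
\[
x_{t+1}-x = (x_{t-\kappa}-x) + (x_{t+1}-x_t) + \sum_{t'=t-\kappa}^{t-1}(x_{t'+1}-x_{t'}),
\]
bound each one-step displacement via Lemma~\ref{contraction_TD} as $\|x_{t'+1}-x_{t'}\|\le\gamma_{t'}\|\tilde F(x_{t'},\xi_{t'})\|_*$, and then majorize $\|\tilde F(x_{t'},\xi_{t'})\|_*\le\bar L\|x_{t'}-x\|+\|\tilde F(x,\xi_{t'})\|_*$ via Assumption~A. The same device expands $\|x_t-x_{t-\kappa}\|$ that arises from the two ``correction'' brackets above.

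\textbf{Selecting the Young's constants.} The exact coefficients in the claim emerge from three carefully scaled Young's inequalities. For the deterministic Lipschitz term, $\gamma_t\bar L\|x_{t+1}-x_t\|\|x_{t+1}-x\|\le\tfrac14\|x_{t+1}-x_t\|^2+\gamma_t^2\bar L^2\|x_{t+1}-x\|^2$, combined with $\|\cdot\|^2\le 2V$, contributes $\tfrac12 V(x_t,x_{t+1})+2\gamma_t^2\bar L^2 V(x_{t+1},x)$; the factor of two on $V(x_{t+1},x)$ is exactly what produces the coefficient $(1-2\gamma_t^2\bar L^2)$ on the left. For the $(x_{t+1}-x_t)$-part of the stochastic term, the asymmetric $ab\le a^2+\tfrac14 b^2$ gives $\gamma_t^2\|\tilde F(x_t,\xi_t)-F(x_t)\|_*^2+\tfrac12 V(x_t,x_{t+1})$, so the two contributions to $V(x_t,x_{t+1})$ saturate precisely what the three-point lemma supplies. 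For each $t'$ in the telescoping sum, the same $ab\le a^2+\tfrac14 b^2$ followed by $\|\tilde F(x_{t'},\xi_{t'})\|_*^2\le 2\bar L^2\|x_{t'}-x\|^2+2\|\tilde F(x,\xi_{t'})\|_*^2$ contributes $\gamma_t\gamma_{t'}\|\tilde F(x_t,\xi_t)-F(x_t)\|_*^2+\gamma_t\gamma_{t'}\bigl[\tfrac{\bar L^2}{2}\|x_{t'}-x\|^2+\tfrac12\|\tilde F(x,\xi_{t'})\|_*^2\bigr]$. Finally, the Lipschitz-correction contribution $2\gamma_t\bar L\|x_{t-\kappa}-x\|\sum_{t'}\gamma_{t'}\|\tilde F(x_{t'},\xi_{t'})\|_*$, expanded analogously, yields per index $\gamma_t\gamma_{t'}\bigl[2\bar L^2\|x_{t-\kappa}-x\|^2+\bar L^2\|x_{t'}-x\|^2+\|\tilde F(x,\xi_{t'})\|_*^2\bigr]$. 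Summing these two bands of terms gives exactly the advertised coefficients $3\bar L^2/2$ on $\|x_{t'}-x\|^2$, $3/2$ on $\|\tilde F(x,\xi_{t'})\|_*^2$, and $2\bar L^2$ on $\|x_{t-\kappa}-x\|^2$.

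\textbf{Main obstacle.} No new conceptual ingredient beyond the three-point lemma, Lemma~\ref{contraction_TD}, and the Lipschitz bounds is required; the difficulty is entirely bookkeeping. The task is to pick the four Young's constants so that every contribution to $V(x_t,x_{t+1})$ exactly sums to one (fully absorbed on the left-hand side) while the single $V(x_{t+1},x)$ contribution equals $2\gamma_t^2\bar L^2$, matching the prefactor on the left-hand side of the claim.
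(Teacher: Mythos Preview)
Your proposal is correct and follows essentially the same route as the paper's proof: the three-point lemma, the decomposition of $\tilde F(x_t,\xi_t)-F(x_t)$ through the intermediate point $x_{t-\kappa}$, the telescoping bound on $\|x_t-x_{t-\kappa}\|$ via Lemma~\ref{contraction_TD}, and the same Young's-inequality splits (quarter-quarter on the two $\|x_{t+1}-x_t\|$ terms, $ab\le a^2+\tfrac14 b^2$ on the telescoped noise piece, and the symmetric/asymmetric split giving the $2\bar L^2\|x_{t-\kappa}-x\|^2$ coefficient). One organizational note: your ``Plan'' paragraph suggests decomposing the operator difference \emph{first} into $A+B-C$ and then pairing each with $x_{t+1}-x$, but doing that literally would force you to control $\|\tilde F(x_{t-\kappa},\xi_t)-F(x_{t-\kappa})\|_*$ on the increments $x_{t'+1}-x_{t'}$; your actual execution in the Young's-constants section correctly keeps $\tilde F(x_t,\xi_t)-F(x_t)$ intact when paired with $x_{t+1}-x_{t-\kappa}$ and only decomposes the operator on the $(x_{t-\kappa}-x)$ factor, which is precisely what the paper does.
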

\begin{proof} 
	The inner product term $\langle \tilde{F}(x_t,\xi_t) , x_{t+1} - x \rangle  $ in \eqnok{opt_deter_step1}
	is written equivalently as
	\begin{align}\label{bound_1}
	\langle \tilde{F}(x_t,\xi_t) , x_{t+1} - x \rangle  
	& = \langle F(x_{t+1}),x_{t+1}-x \rangle  \nn\\
& 	\quad +\langle F(x_t)-F(x_{t+1}) , x_{t+1} - x \rangle \nn \\
	& \quad +  	\langle\tilde{F}(x_t,\xi_t)-F(x_{t}) , x_{t+1} - x \rangle.
	\end{align}	
	By Lipschitz-continuity the second inner product term  is lower bounded as 
	\begin{equation}
	\label{bound_15}
	\langle F(x_t)-F(x_{t+1}) , x_{t+1} - x \rangle
	\geq -L\|x_t-x_{t+1}\|\|x_{t+1}-x\|.
	\end{equation} 	
	As for the third inner product term in \eqnok{bound_1} we interject an intermediate iterate $x_{t-\kappa}$
	before applying the Lipschitz-continuity conditions. %\textcolor{black}{The associated time scale parameter $\tau$ needs to satisfy a lower bound as in \eqnok{def_tau}.  one has}
	\begin{align}
		\langle\tilde{F}(x_t,\xi_t)-F(x_{t}) , x_{t+1} - x \rangle
	%& = \langle\tilde{F}(x_t,\xi_t)-F(x_{t}) , x_{t+1} - x_{t-\kappa} \rangle+ 	\langle\tilde{F}(x_t,\xi_t)-F(x_{t}) , x_{t-\kappa} - x \rangle\nn\\
	& = \langle\tilde{F}(x_t,\xi_t)-F(x_{t}) , x_{t+1} - x_{t-\kappa} \rangle + \langle\tilde{F}(x_t,\xi_t)-\tilde{F}(x_{t-\kappa},\xi_t) , x_{t-\kappa} - x \rangle\nn\\
	& \quad+\langle\tilde{F}(x_{t-\kappa},\xi_t)-F(x_{t-\kappa}) , x_{t-\kappa} - x \rangle+\langle F(x_{t-\kappa})-F(x_{t}) , x_{t-\kappa} - x \rangle\nn\\
	& \geq  - \|\tilde{F}(x_t,\xi_t)-F(x_{t}) \|_*\|x_{t+1}-x_{t-\kappa}\| - \tilde{L}\|x_t-x_{t-\kappa}\|\|x_{t-\kappa}-x\|\nn\\
	& \quad +  \langle\tilde{F}(x_{t-\kappa},\xi_t)-F(x_{t-\kappa}) , x_{t-\kappa} - x \rangle - L\|x_t-x_{t-\kappa}\|\|x_{t-\kappa}-x\|.\label{bound_2}
	\end{align}	
	We will  bound the individual terms in the rhs of \eqnok{bound_2}.
	By invoking Lemma \ref{contraction_TD} and the triangle inequality		
	\begin{align} \label{bound_tau}
	\|x_t - x_{t-\kappa}\|  
	 \leq \tsum_{t' = t-\kappa}^{t-1}\gamma_{t'}\|\tilde{F}(x_{t'},\xi_{t'})\|_*  
%& 	=  \tsum_{t' = t-\kappa}^{t-1}\gamma_{t'}\|\tilde{F}(x_{t'},\xi_{t'})-\tilde{F}(x,\xi_{t'})+\tilde{F}(x,\xi_{t'})\|_* \nn\\
	 \leq  \tsum_{t' = t-\kappa}^{t-1}\gamma_{t'}(\tilde{L}\|x_{t'}-x\|+\|\tilde{F}(x,\xi_{t'})\|_*).
	\end{align}
Utilizing the above bound and Young's inequality we obtain	
		\begin{align} \label{bound_3}
		\|x_t - x_{t-\kappa}\|\|x_{t-\kappa}-x\| 
		\leq ~& \tsum_{t' = t-\kappa}^{t-1}\gamma_{t'}(\bar{L}\|x_{t'}-x\|\|x_{t-\kappa}-x\| +\|\tilde{F}(x,\xi_{t'})\|_*\|x_{t-\kappa}-x\| )\nn\\
		\leq ~&\tsum_{t' = t-\kappa}^{t-1}\tfrac{1}{2}\gamma_{t'}(\bar{L}\|x_{t'}-x\|^2+2\bar{L}\|x_{t-\kappa}-x\|^2+\tfrac{1}{\bar{L}}\|\tilde{F}(x,\xi_{t'})\|_*^2).
		\end{align}
The  relation in \eqnok{bound_3} pertains to the two terms in \eqnok{bound_2} stemming from applying the Lipschitz-continuity of the operator and its stochastic version respectively. 
We proceed to  the term $\|\tilde{F}(x_t,\xi_t)-F(x_{t}) \|_*\|x_{t+1}-x_{t-\kappa}\|$ in \eqnok{bound_2},
where again we utilize \eqnok{bound_tau} and Young's inequality to obtain
		\begin{align} \label{bound_4}
		\|\tilde{F}(x_t,\xi_t)-F(x_{t}) \|_*\|x_{t+1}-x_{t-\kappa}\|
		& \leq  \|\tilde{F}(x_t,\xi_t)-F(x_{t}) \|_*\|x_{t+1}-x_{t}\| + \|\tilde{F}(x_t,\xi_t)-F(x_{t}) \|_*\|x_{t}-x_{t-\kappa}\|\nn\\
		& \leq  \|\tilde{F}(x_t,\xi_t)-F(x_{t}) \|_*\big(\tsum_{t' = t-\kappa}^{t-1}\gamma_{t'}(\tilde{L}\|x_{t'}-x\|+\|\tilde{F}(x,\xi_{t'})\|_*)\big)\nn\\
		& \quad + \|\tilde{F}(x_t,\xi_t)-F(x_{t}) \|_*\|x_{t+1}-x_{t}\|\nn\\
		&\leq  \tsum_{t' = t-\kappa}^{t-1}\gamma_{t'}(\|\tilde{F}(x_t,\xi_t)-F(x_{t}) \|_*^2 + \tfrac{\bar{L}^2}{2}\|x_{t'}-x\|^2+\tfrac{1}{2}\|\tilde{F}(x,\xi_{t'})\|_*^2)\nn\\
		& \quad + \|\tilde{F}(x_t,\xi_t)-F(x_{t}) \|_*\|x_{t+1}-x_{t}\|.
		\end{align}
The derived bounds in \eqnok{bound_3} and \eqnok{bound_4} are substituted back into \eqnok{bound_2}, 
		\begin{align} \label{bound_5}
		\langle\tilde{F}(x_t,\xi_t)-F(x_{t}) , x_{t+1} - x \rangle 
		& \geq  - \tsum_{t' = t-\kappa}^{t-1}\gamma_{t'}(\|\tilde{F}(x_t,\xi_t)-F(x_{t}) \|_*^2 + \tfrac{\bar{L}^2}{2}\|x_{t'}-x\|^2+\tfrac{1}{2}\|\tilde{F}(x,\xi_{t'})\|_*^2)\nn\\
		& \quad - \|\tilde{F}(x_t,\xi_t)-F(x_{t}) \|_*\|x_{t+1}-x_{t}\|+ \langle\tilde{F}(x_{t-\kappa},\xi_t)-F(x_{t-\kappa}) , x_{t-\kappa}-x\rangle \nn\\
		& \quad -\bar{L}\tsum_{t' = t-\kappa}^{t-1}\gamma_{t'}(\bar{L}\|x_{t'}-x\|^2+2\bar{L}\|x_{t-\kappa}-x\|^2+\tfrac{1}{\bar{L}}\|\tilde{F}(x,\xi_{t'})\|_*^2)\nn\\
		& = - \tsum_{t' = t-\kappa}^{t-1}\gamma_{t'}(\|\tilde{F}(x_t,\xi_t)-F(x_{t}) \|_*^2 + \tfrac{3\bar{L}^2}{2}\|x_{t'}-x\|^2+\tfrac{3}{2}\|\tilde{F}(x,\xi_{t'})\|_*^2 \nn\\
		& \quad + 2\bar{L}^2\|x_{t-\kappa}-x\|^2)  - \|\tilde{F}(x_t,\xi_t)-F(x_{t}) \|_*\|x_{t+1}-x_{t}\| \nn\\
		& \quad+ \langle\tilde{F}(x_{t-\kappa},\xi_t)-F(x_{t-\kappa}) , x_{t-\kappa}-x\rangle.
		\end{align}
By taking into account \eqref{opt_deter_step1}, \eqnok{bound_1} and\eqnok{bound_15} we can lower bound the difference $\Delta V_t(x)  =  V(x_t, x) - V(x_{t+1}, x)$, 
		\begin{align}
		\label{Delta_V_bound}
		\Delta V_t(x) & \geq  \gamma_t \langle F(x_{t+1}),x_{t+1}-x \rangle - \gamma_t\bar{L}\|x_t-x_{t+1}\|\|x_{t+1}-x\|+V(x_t,x_{t+1})\nn\\
		& \quad - \tsum_{t' = t-\kappa}^{t-1}\gamma_t\gamma_{t'}(\|\tilde{F}(x_t,\xi_t)-F(x_{t}) \|_*^2 + \tfrac{3\bar{L}^2}{2}\|x_{t'}-x\|^2+\tfrac{3}{2}\|\tilde{F}(x,\xi_{t'})\|_*^2 +2\bar{L}^2\|x_{t-\kappa}-x\|^2)\nn\\
		& \quad  -  \gamma_t\|\tilde{F}(x_t,\xi_t)-F(x_{t}) \|_*\|x_{t+1}-x_{t}\|+ \gamma_t\langle\tilde{F}(x_{t-\kappa},\xi_t)-F(x_{t-\kappa}) , x_{t-\kappa}-x\rangle \nn\\
		& \geq \gamma_t \langle F(x_{t+1}),x_{t+1}-x \rangle -2\gamma_t^2\bar{L}^2V(x_{t+1}-x)-\gamma_t^2\|\tilde{F}(x_t,\xi_t)-F(x_{t}) \|_*^2\nn\\
		& \quad -  \tsum_{t' = t-\kappa}^{t-1}\gamma_t\gamma_{t'}(\|\tilde{F}(x_t,\xi_t)-F(x_{t}) \|_*^2 + \tfrac{3\bar{L}^2}{2}\|x_{t'}-x\|^2+\tfrac{3}{2}\|\tilde{F}(x,\xi_{t'})\|_*^2+2\bar{L}^2\|x_{t-\kappa}-x\|^2)\nn\\
		& \quad + \gamma_t\langle\tilde{F}(x_{t-\kappa},\xi_t)-F(x_{t-\kappa}) , x_{t-\kappa}-x\rangle,
		\end{align}
		where the second inequality follows from 
		\begin{align*}
		& \quad - \gamma_t\bar{L}\|x_t-x_{t+1}\|\|x_{t+1}-x\| + V(x_t,x_{t+1}) -\gamma_t\|\tilde{F}(x_t,\xi_t)-F(x_{t}) \|_*\|x_{t+1}-x_{t}\|\\
		& \geq  (\tfrac{1}{4}\|x_{t+1}-x_t\|^2 - \gamma_t\bar{L}\|x_t-x_{t+1}\|\|x_{t+1}-x\| ) + (\tfrac{1}{4}\|x_{t+1}-x_t\|^2 -\gamma_t\|\tilde{F}(x_t,\xi_t)-F(x_{t}) \|_*\|x_{t+1}-x_{t}\|)\\
		%& \geq  -\gamma_t^2\bar L^2 \|x_{t+1}-x\|^2-\gamma_t^2\|\tilde{F}(x_t,\xi_t)-F(x_{t}) \|_*^2\\
		& \geq  -2\gamma_t^2\bar L^2 V(x_{t+1},x)-\gamma_t^2\|\tilde{F}(x_t,\xi_t)-F(x_{t}) \|_*^2. 
		\end{align*}
	The desired result is obtained  from \eqnok{Delta_V_bound} by rearranging  terms.
\end{proof}

We now consider stochastic GSMVIs  which satisfy \eqnok{G_monotone1} for some $\mu > 0$.
\begin{theorem}\label{TD_convergence0}
	Let \textit{Assumptions A, B, C} and \textit{D} hold. Assume that \eqnok{G_monotone1} holds for some $\mu > 0$, 
	let $x^*$ be a solution of problem  \eqref{VIP}, and $\{\theta_t\} $ a sequence of nonnegative numbers.
	{\color{black}Let the time scale parameter $\tau$ satisfy a lower bound as in \eqnok{def_tau}.}
	  Suppose that the parameters $\{\g_t\}$, $\{\t_t\}$  satisfy for $t=1,..., k-1$
	\begin{align} \label{cond_10}
	B_{t+1} \leq A_t
	\end{align}
where 
	\begin{align} \label{ABD}
A_t &= \theta_t (1+2\mu\gamma_t-2\gamma_t^2\bar{L}^2),\nn\\
B_t & = \left \{
\begin{array}{ll}
\theta_1(1+\gamma_1^2E)+\sum\limits_{t'=1}^{\tau+1}2\theta_{t'}\gamma_{t'}C\rho^{t'-1}+\sum\limits_{t'=2}^{\tau+1}3 \theta_{t'}\gamma_{t'}\gamma_1 \bar{L}^2+\sum\limits_{t'=2}^{\tau+1} \sum\limits_{t''=1}^{t'-1}4\theta_{t'}\gamma_{t'}\gamma_{t''}\bar{L}^2,
& t=1\\
\theta_t(1+\sum\limits_{t' = 1}^{t}\gamma_t\gamma_{t'}E)  +2\theta_{t+\tau}\gamma_{t+\tau}C\rho^\tau+ \sum\limits_{t'=t+1}^{t+\tau}3 \theta_{t'}\gamma_{t'}\gamma_t \bar{L}^2+ \sum\limits_{t'=t}^{t+\tau-1}4\theta_{t+\tau}\gamma_{t+\tau}\gamma_{t'}\bar{L}^2,                    & t \in [2,\tau]\\
\theta_t(1+\sum\limits_{t' = t-\tau}^{t}\gamma_t\gamma_{t'}E) +2\theta_{t+\tau}\gamma_{t+\tau}C\rho^\tau+ \sum\limits_{t'=t+1}^{t+\tau}3 \theta_{t'}\gamma_{t'}\gamma_t \bar{L}^2+ \sum\limits_{t'=t}^{t+\tau-1}4\theta_{t+\tau}\gamma_{t+\tau}\gamma_{t'}\bar{L}^2, & t\in (\tau, k-\tau]\\
\theta_t(1+\tsum_{t' = t-\tau}^{t}\gamma_t\gamma_{t'}E) + \tsum_{t'=t+1}^{k}3 \theta_{t'}\gamma_{t'}\gamma_t \bar{L}^2, & t \in (k-\tau, k],
\end{array}
\right.\nn\\
D& = \tsum_{t=1}^{k-\tau} \tsum_{t'=t}^{t+\tau}\theta_{t'}\gamma_{t'}\gamma_t+
\tsum_{t=k-\tau+1}^{k} \tsum_{t'=t}^{k}\theta_{t'}\gamma_{t'}\gamma_t,\quad E = 16\bar{L}^2+2\varsigma^2.
\end{align}	
	Then for all $k \geq \tau $,
	$$
	A_k\bbe[V(x_{k+1},x^*)]\leq B_1V(x_1,x^*)+D(4\sigma^2+3\|F(x^*)\|^2_*).
	$$
\end{theorem}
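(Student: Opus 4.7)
The plan is to start from Proposition~\ref{lemma_dist1} with $x = x^*$ and $\kappa = \tau$, and then invoke the generalized strong monotonicity \eqref{G_monotone1} on the inner product $\langle F(x_{t+1}),x_{t+1}-x^*\rangle \ge 2\mu V(x_{t+1},x^*)$. This combines the coefficient $(1 - 2\gamma_t^2\bar L^2)$ on $V(x_{t+1},x^*)$ with the strongly monotone gain $2\mu\gamma_t$, producing exactly the factor $A_t = \theta_t(1+2\mu\gamma_t-2\gamma_t^2\bar L^2)$ once I multiply through by $\theta_t$. So after rearrangement the per-iteration inequality reads
\[
A_t V(x_{t+1},x^*) \le \theta_t V(x_t,x^*) - \theta_t\gamma_t \langle \tilde F(x_{t-\tau},\xi_t)-F(x_{t-\tau}),\, x_{t-\tau}-x^*\rangle + (\textrm{noise/chain terms}).
\]

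Next I would take expectations. For the bias cross term, because $x_{t-\tau}$ is $\mathcal F_{t-\tau-1}$-measurable I condition on $\mathcal F_{t-\tau-1}$; Assumption~D (with the conditioning at time $t-\tau-1$ and forward offset $\tau$) together with $\|x_{t-\tau}-x^*\|^2\le 2V(x_{t-\tau},x^*)$ from \eqref{strong_convex_V} bounds its absolute value by $2C\rho^\tau\,\mathbb E[V(x_{t-\tau},x^*)]$, which is precisely the origin of the term $2\theta_{t+\tau}\gamma_{t+\tau}C\rho^\tau$ in $B_{t}$ after the index shift $t\mapsto t+\tau$. For the variance-type terms $\|\tilde F(x_t,\xi_t)-F(x_t)\|_*^2$ I would apply Lemma~\ref{bound_delta_norm_0} to bound them by $\sigma^2+(\varsigma^2+8\bar L^2)\,\mathbb E\|x_t-x^*\|^2$. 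For the remaining terms $\|\tilde F(x^*,\xi_{t'})\|_*^2$ I would split via Young's inequality into $2\|\tilde F(x^*,\xi_{t'})-F(x^*)\|_*^2+2\|F(x^*)\|_*^2$ and control the first piece by $\sigma^2$ via Assumptions~B and~C at $x=x^*$. These substitutions convert every random quantity into a weighted sum of $\mathbb E[V(x_{t'},x^*)]$ plus constants proportional to $\sigma^2$ and $\|F(x^*)\|_*^2$.

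Then I would multiply the inequality at step $t$ by $\theta_t$ and sum $t=1,\dots,k$. On the left side I obtain $\sum_t A_t \mathbb E[V(x_{t+1},x^*)]$; on the right side, the coefficient multiplying each $\mathbb E[V(x_t,x^*)]$ has four sources: (i) the plain $\theta_t V(x_t,x^*)$ from step $t$ itself, (ii) the variance contribution $\theta_t\gamma_t^2(\varsigma^2+8\bar L^2)$ and $\sum \theta_{t'}\gamma_{t'}\gamma_t(\varsigma^2+8\bar L^2)$ from Lemma~\ref{bound_delta_norm_0}, (iii) the chain terms $\tfrac{3\bar L^2}{2}\|x_{t'}-x\|^2+2\bar L^2\|x_{t-\kappa}-x\|^2$ in Proposition~\ref{lemma_dist1} (the factor $4\bar L^2$ in $B_t$ collects contributions both from the inner sum and from the $x_{t-\tau}$ term after reindexing by $\tau$), and (iv) the Markov-bias term $2\theta_{t+\tau}\gamma_{t+\tau}C\rho^\tau$. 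Aggregating everything is exactly how the piecewise formula for $B_t$ in \eqref{ABD} is built, with the case split on whether $t\le \tau$, $t\in(\tau,k-\tau]$, or $t\in(k-\tau,k]$ reflecting how many of the staggered indices $t-\tau,\dots,t-1$ and $t+1,\dots,t+\tau$ are inside $[1,k]$. The constants $\sigma^2$ and $\|F(x^*)\|_*^2$ accumulate with exactly the double sum $D$ of \eqref{ABD}.

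Finally, the hypothesis $B_{t+1}\le A_t$ for $t=1,\dots,k-1$ is the telescoping condition: the coefficient $B_{t+1}$ of $\mathbb E[V(x_{t+1},x^*)]$ on the right is dominated by the coefficient $A_t$ on the left, so all interior terms cancel and only $B_1\,V(x_1,x^*)$ and the final $A_k\,\mathbb E[V(x_{k+1},x^*)]$ survive, yielding the claimed bound. The main obstacle is purely mechanical bookkeeping: correctly attributing each chain term $\gamma_t\gamma_{t'}$ to the right index after the index shift $t\mapsto t+\tau$ induced by the Markov bias, and verifying that the boundary cases ($t=1$, $t\in[2,\tau]$, $t\in(\tau,k-\tau]$, $t\in(k-\tau,k]$) match the four branches of the definition of $B_t$. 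Once that catalog is laid out row by row, the telescoping is immediate from $B_{t+1}\le A_t$.
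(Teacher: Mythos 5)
Your proposal follows the paper's own proof essentially step for step: start from Proposition~\ref{lemma_dist1} at $x=x^*$ (with $\kappa=\tau$ for $t\ge\tau+1$ and $\kappa=t-1$ for the initial steps, which you correctly flag as the source of the case split in $B_t$), apply generalized strong monotonicity to form $A_t$, bound the bias term via Assumption~D conditioned on $\mathcal F_{t-\tau-1}$ and the variance terms via Lemma~\ref{bound_delta_norm_0} and Assumptions~B--C at $x^*$, convert norms to $V$ via \eqref{strong_convex_V}, multiply by $\theta_t$, sum, and telescope with $B_{t+1}\le A_t$. This is exactly the argument in the paper, so the proposal is correct and takes the same route.
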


\begin{proof}
	Let us fix $x=x^*$ and take expectation on both sides of Proposition \ref{lemma_dist1},
	\begin{align*}
&	(1-2\gamma_t^2\bar{L}^2)\bbe[V(x_{t+1},x^*)]+\gamma_t\bbe[\langle F(x_{t+1}),x_{t+1}-x^*\rangle]+ \gamma_t\bbe[\langle\tilde{F}(x_{t-\kappa},\xi_t)-F(x_{t-\kappa}) , x_{t-\kappa}-x^*\rangle ]
	\\   & \leq \bbe[V(x_{t},x^*) ]+\gamma_t^2\bbe[\|\tilde{F}(x_t,\xi_t)-F(x_{t}) \|_*^2]+ \tsum_{t' = t-\kappa}^{t-1}\gamma_t\gamma_{t'}(\bbe[\|\tilde{F}(x_t,\xi_t)-F(x_{t}) \|_*^2] \\
	& \quad + \tfrac{3\bar{L}^2}{2}\bbe[\|x_{t'}-x^*\|^2]+\tfrac{3}{2}\bbe[\|\tilde{F}(x^*,\xi_{t'})\|_*^2]+2\bar{L}^2\bbe[\|x_{t-\kappa}-x^*\|^2]).
	\end{align*}
	Invoking the generalized strong monotonicity condition \eqnok{G_monotone1} and Young's inequality gives
	\begin{align*}
& 	(1+2\mu\gamma_t-2\gamma_t^2\bar{L}^2)\bbe[V(x_{t+1},x^*)]+ \gamma_t\bbe[\langle\tilde{F}(x_{t-\kappa},\xi_t)-F(x_{t-\kappa}) , x_{t-\kappa}-x^*\rangle ]
	\\
&	\leq \bbe[V(x_{t},x^*) ]+\gamma_t^2\bbe[\|\tilde{F}(x_t,\xi_t)-F(x_{t}) \|_*^2]+ \tsum_{t' = t-\kappa}^{t-1}\gamma_t\gamma_{t'}\big(\bbe[\|\tilde{F}(x_t,\xi_t)-F(x_{t}) \|_*^2] \\
& \quad + \tfrac{3\bar{L}^2}{2}\bbe[\|x_{t'}-x^*\|^2]+3\bbe[\|\tilde{F}(x^*,\xi_{t'})-F(x^*)\|_*^2]+3\|F(x^*)\|^2_*+2\bar{L}^2\bbe[\|x_{t-\kappa}-x^*\|^2]\big).
	\end{align*}
By taking into account \eqnok{bound_delta_inner1} and \eqnok{bound_delta_norm0}, we obtain
{\color{black}
	\begin{align*}
&	(1+2\mu\gamma_t-2\gamma_t^2\bar{L}^2)\bbe[V(x_{t+1},x^*)]- \gamma_tC\rho^\tau\bbe[\|x_{t-\tau}-x^*\|^2]e
	\\
	& \leq \bbe[V(x_{t},x^*) ]+\gamma_t^2\big(\sigma^2+8 \bar{L}^2\bbe[\|x_t-x^*\|^2]+\varsigma^2 \bbe[\|x_t-x^*\|^2]\big)+ \tsum_{t' = t-\kappa}^{t-1}\gamma_t\gamma_{t'}\big(\sigma^2+8 \bar{L}^2\bbe[\|x_t-x^*\|^2] \\
	& \quad +\varsigma^2 \bbe[\|x_t-x^*\|^2]+ \tfrac{3\bar{L}^2}{2}\bbe[\|x_{t'}-x^*\|^2]+3\sigma^2+3\|F(x^*)\|^2_*+2\bar{L}^2\bbe[\|x_{t-\kappa}-x^*\|^2]\big).
	\end{align*}}
	Using \eqnok{strong_convex_V}, we obtain
	\begin{align}\label{ineq1}
	(1+2\mu\gamma_t-2\gamma_t^2\bar{L}^2)\bbe[V(x_{t+1},x^*)]\leq
	\big(1+\tsum_{t' = t-\kappa}^{t}\gamma_t\gamma_{t'}(16\bar{L}^2+2\varsigma^2)\big)\bbe[V(x_{t},x^*) ]+ 2\gamma_tC\rho^\kappa\bbe[V(x_{t-\kappa},x^*)]\nn\\
	+ \tsum_{t' = t-\kappa}^{t-1}\gamma_t\gamma_{t'}(  3\bar{L}^2\bbe[V(x_{t'},x^*)]+4\bar{L}^2\bbe[V(x_{t-\kappa},x^*)]) + \tsum_{t' = t-\kappa}^{t}\gamma_t\gamma_{t'}(4\sigma^2+3\|F(x^*)\|_*^2).
	\end{align}
	{\color{black} For $t \geq \tau+1$, we set $\kappa=\tau$ in \eqref{ineq1}. As for the case of $t \leq \tau$, we set $\kappa = t-1$ in \eqref{ineq1}. Multiplying  both sides  with $\theta_t$ and
		subsequently  summing up from $t=1$ to $k$, we get}

\begin{comment}
\textcolor{black}{sum from $t=1 $ to $k$ n}
\textcolor{black}{\begin{eqnarray*}  \sum_{t=1}^k A_t  \bbe[V(x_{t+1},x^*)] & \leq &  
\sum_{t=1}^\tau  \theta_t
\big(1+\tsum_{t' = 1}^{t}\gamma_t\gamma_{t'}(16\bar{L}^2+2\varsigma^2)\big)\bbe[V(x_{t},x^*) ] \\
& + & \sum_{t=\tau+1}^k  \theta_t
\big(1+\tsum_{t' = t -\tau}^{t}\gamma_t\gamma_{t'}(16\bar{L}^2+2\varsigma^2)\big)\bbe[V(x_{t},x^*) ] \\
& + &  \sum_{t=1}^\tau 2 \theta_t \gamma_t C\rho^{t-1} \bbe[V(x_{1},x^*)]\nn\\
& + &  \sum_{t=\tau+1}^k 2 \theta_t \gamma_tC\rho^\tau\bbe[V(x_{t-\tau},x^*)]\nn\\
	&+&  \sum_{t=1}^\tau  \theta_t \tsum_{t' = 1}^{t-1}\gamma_t\gamma_{t'}(  3\bar{L}^2\bbe[V(x_{t'},x^*)] \\
	&+&  \sum_{t=\tau+1}^k  \theta_t \tsum_{t' = t-\tau}^{t-1}\gamma_t\gamma_{t'}(  3\bar{L}^2\bbe[V(x_{t'},x^*)] \\
&+ & \sum_{t=1}^\tau  \theta_t 4\bar{L}^2\bbe[V(x_{1},x^*)])   +  \sum_{t=1}^\tau  \theta_t \tsum_{t' = 1}^{t}\gamma_t\gamma_{t'}(4\sigma^2+3\|F(x^*)\|_*^2) \\
&+ & \sum_{t=\tau+1}^k  \theta_t 4\bar{L}^2\bbe[V(x_{t-\tau},x^*)])   +  \sum_{t=\tau+1}^k  \theta_t \tsum_{t' = t-\tau}^{t}\gamma_t\gamma_{t'}(4\sigma^2+3\|F(x^*)\|_*^2).
\end{eqnarray*}}
\end{comment}  
	\begin{comment}
	\begin{align}\label{ineq2}
	(1+2\mu\gamma_t-2\gamma_t^2\bar{L}^2)\bbe[V(x_{t+1},x^*)]\leq\big(1+\tsum_{t' = 1}^{t}\gamma_t\gamma_{t'}(16\bar{L}^2+2\varsigma^2)\big)\bbe[V(x_{t},x^*) ]+ 2\gamma_tC\rho^{t-1}V(x_{1},x^*)\nn\\
	+ \tsum_{t' = 1}^{t-1}\gamma_t\gamma_{t'}(  3\bar{L}^2\bbe[V(x_{t'},x^*)]+4\bar{L}^2V(x_{1},x^*)) + \tsum_{t' = 1}^{t}\gamma_t\gamma_{t'}(4\sigma^2+3\|F(x^*)\|_*^2).
	\end{align}
	\end{comment}
	\begin{align}\label{ineq3}
	\tsum_{t=1}^k A_t\bbe[V(x_{t+1},x^*)] \leq \tsum_{t=1}^k B_t\bbe[V(x_{t},x^*)]+D(4\sigma^2+3\|F(x^*)\|^2_*),
	\end{align}
	where $A_t$, $B_t$ and $D$ are defined at \eqnok{ABD}.
	\begin{comment}
	\begin{align*}
	A_t &= \theta_t (1+2\mu\gamma_t-2\gamma_t^2\bar{L}^2),\\
	B_t & = \left \{
	\begin{array}{ll}
	\theta_1(1+2\gamma_1^2\bar{L}^2)+\tsum_{t'=1}^{\tau+1}2\theta_{t'}\gamma_{t'}C\rho^{t'-1}+\tsum_{t'=2}^{\tau+1}3 \theta_{t'}\gamma_{t'}\gamma_1 \bar{L}^2+\tsum_{t'=2}^{\tau+1}\big( \tsum_{t''=1}^{t'-1}4\theta_{t'}\gamma_{t'}\gamma_{t''}\bar{L}^2\big),
	& t=1\\
	\theta_t(1+\tsum_{t' = 1}^{t}2\gamma_t\gamma_{t'}\bar{L}^2)  +2\theta_{t+\tau}\gamma_{t+\tau}C\rho^\tau+ \tsum_{t'=t+1}^{t+\tau}3 \theta_{t'}\gamma_{t'}\gamma_t \bar{L}^2+ \tsum_{t'=t}^{t+\tau-1}4\theta_{t+\tau}\gamma_{t+\tau}\gamma_{t'}\bar{L}^2,                    & 2\leq t\leq \tau\\
	\theta_t(1+\tsum_{t' = t-\tau}^{t}2\gamma_t\gamma_{t'}\bar{L}^2) +2\theta_{t+\tau}\gamma_{t+\tau}C\rho^\tau+ \tsum_{t'=t+1}^{t+\tau}3 \theta_{t'}\gamma_{t'}\gamma_t \bar{L}^2+ \tsum_{t'=t}^{t+\tau-1}4\theta_{t+\tau}\gamma_{t+\tau}\gamma_{t'}\bar{L}^2, & \tau<t\leq k-\tau\\
	\theta_t(1+\tsum_{t' = t-\tau}^{t}2\gamma_t\gamma_{t'}\bar{L}^2) + \tsum_{t'=t+1}^{k}3 \theta_{t'}\gamma_{t'}\gamma_t \bar{L}^2, & k-\tau<t\leq k
	\end{array}
	\right.\\
	D& = \tsum_{t=1}^{k-\tau} \big(\tsum_{t'=t}^{t+\tau}\frac{5}{2}\theta_{t'}\gamma_{t'}\gamma_t\big)+
	\tsum_{t=k-\tau+1}^{k} \big(\tsum_{t'=t}^{k}\frac{5}{2}\theta_{t'}\gamma_{t'}\gamma_t\big).
	\end{align*}
		\end{comment}
Finally by	invoking the condition \eqnok{cond_10} we obtain the result.
	%\begin{align*}
	%A_k\bbe[V(x_{k+1},x^*)]\leq B_1V(x_1,x^*)+D(4\sigma^2+3\|F(x^*)\|^2_*).
	%\end{align*}
\end{proof}

All our algorithms involve a time scale parameter $ \tau $, a lower bound of which is defined as
\begin{align}\label{def_tau}
\underline{\tau} = \lceil \tfrac{\log{(1/\mu)}+\log(9C)}{\log{(1/\rho)}} \rceil.
\end{align}
We now specify the selection of a particular
stepsize policy for solving stochastic GSMVIs. 
\begin{corollary}\label{step_size_TD1}
	Assume that \eqnok{G_monotone1} holds for some $\mu > 0$, let $ \underline{\tau} $ be defined in \eqnok{def_tau}. If
	 $\tau \geq \underline{\tau}$
	and
	$$
	%t_0 =\tfrac{31(\tau+1) \bar{L}^2}{  \mu^2}
	{\color{black}t_0 = \tfrac{(\tau+1)(184\bar L^2+16\varsigma^2)}{3\mu^2}}, ~~~ \gamma_t= \tfrac{2}{\mu ( t_0 + t-1) },~~~ \theta_t  = (t + t_0)(t+t_0+1),
	$$
	%~~~ \rho^\tau \leq \tfrac{\mu}{9C},
	then
	\begin{align}\label{corr_TD1}
	\bbe[V(x_{k+1},x^*)] 
	\leq \tfrac{MV(x_{1},x^*)}{ (k + t_0)(k+t_0+1)}  +  \tfrac{20k(\tau+1) \sigma^2}{\mu^2  (k + t_0)(k+t_0+1)}+\tfrac{15k(\tau+1) \|F(x^*)\|_*^2}{\mu^2  (k + t_0)(k+t_0+1)},
	\end{align}
	where
$
	M =t_0(t_0+1) +\tfrac{4C(t_0+\tau+4)}{\mu(1-\rho)}+ \tfrac{3\tau(\tau+2)\bar{L}^2}{\mu^2}.
$
\end{corollary}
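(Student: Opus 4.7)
The plan is to apply Theorem~\ref{TD_convergence0} directly with the specified choices $\theta_t = (t+t_0)(t+t_0+1)$ and $\gamma_t = 2/[\mu(t+t_0-1)]$, and the work then splits into two tasks: verify the recursive hypothesis $B_{t+1} \leq A_t$, and then evaluate the three quantities $A_k$, $B_1$, and $D$ in closed form before dividing.

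For the verification of $B_{t+1} \leq A_t$, I would first note the identities $\theta_{t+1} - \theta_t = 2(t+t_0+1)$ and $2\mu\gamma_t\theta_t = 4(t+t_0)(t+t_0+1)/(t+t_0-1) \geq 4(t+t_0+1)$, so that $A_t - \theta_{t+1}$ exceeds $2(t+t_0+1) - 2\gamma_t^2\bar L^2 \theta_t$. This ``slack'' must dominate the three extra contributions in $B_{t+1}$: the noise term of order $(\tau+1)\theta_{t+1}\gamma_{t+1}^2 E$, the bias term $2\theta_{t+1+\tau}\gamma_{t+1+\tau}C\rho^\tau$, and the Lipschitz cross-sums of order $\tau\bar L^2\theta_{t+1+\tau}\gamma_{t+1+\tau}\gamma_{t+1}$. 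The hypothesis $\tau \geq \underline\tau$ from \eqref{def_tau} guarantees $C\rho^\tau \leq \mu/9$, which tames the bias piece, while the remaining pieces each scale as $O((\tau+1)(\bar L^2+\varsigma^2)(t+t_0+1)/(\mu^2(t+t_0-1)))$. Choosing $t_0$ proportional to $(\tau+1)(\bar L^2+\varsigma^2)/\mu^2$---with the explicit constant $184/3$ for $\bar L^2$ and $16/3$ for $\varsigma^2$ absorbing all numerical factors---yields the required inequality in every regime of the piecewise definition of $B_t$ in \eqref{ABD}; the four cases in \eqref{ABD} need to be checked separately but each reduces to the same slack calculation with at most boundary adjustments in the summation limits.

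Once $B_{t+1} \leq A_t$ is in hand, I would bound the three constants. Since $\gamma_k \bar L \leq \mu/\sqrt{2}$ by the choice of $t_0$, we have $1 + 2\mu\gamma_k - 2\gamma_k^2\bar L^2 \geq 1$, so $A_k \geq (k+t_0)(k+t_0+1)$. For $B_1$ the four summands of the $t=1$ branch of \eqref{ABD} evaluate termwise: the leading part contributes $\theta_1(1+\gamma_1^2 E) \leq 2 t_0(t_0+1)$ (absorbed into the $t_0(t_0+1)$ term of $M$); the geometric bias sum $\sum_{t'=1}^{\tau+1}2\theta_{t'}\gamma_{t'}C\rho^{t'-1}$ telescopes by $1/(1-\rho)$ to give the $4C(t_0+\tau+4)/[\mu(1-\rho)]$ piece, using $\theta_{t'}\gamma_{t'} = 2(t'+t_0)(t'+t_0+1)/[\mu(t'+t_0-1)] = O((t_0+\tau)/\mu)$; and the two Lipschitz double-sums contain at most $\tau^2$ terms of size $O(\bar L^2/\mu^2)$, producing the $3\tau(\tau+2)\bar L^2/\mu^2$ piece. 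Finally, $D$ is a sum of at most $k(\tau+1)$ products $\theta_{t'}\gamma_{t'}\gamma_t$ with $|t-t'|\leq \tau$; since for nearby indices the ratio $\theta_{t'}/[(t+t_0-1)(t'+t_0-1)]$ is bounded by a constant, each product is $O(1/\mu^2)$, and a careful count gives $D \leq 5k(\tau+1)/\mu^2$, whose product with $(4\sigma^2+3\|F(x^*)\|_*^2)$ supplies the $20k(\tau+1)\sigma^2$ and $15k(\tau+1)\|F(x^*)\|_*^2$ numerators. Substituting everything into $\bbe[V(x_{k+1},x^*)] \leq [B_1 V(x_1,x^*) + D(4\sigma^2+3\|F(x^*)\|_*^2)]/A_k$ yields \eqref{corr_TD1}.

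The main obstacle is the uniform verification of $B_{t+1} \leq A_t$: the $B_t$ expression has four cases and the cross-sums of Lipschitz terms are intricate, so the bookkeeping needs care, especially near the boundaries $t \approx \tau$ and $t \approx k-\tau$ where the summation ranges differ. The key quantitative insight, however, is unchanged across regimes---namely that the $\theta_t$ grows just fast enough that $(\theta_{t+1} - \theta_t)$ is exactly compensated by the strong-monotonicity gain $2\mu\gamma_t\theta_t$ up to a factor of two, leaving a linear-in-$(t+t_0+1)$ slack that absorbs the Markovian-noise inflation provided $t_0$ is chosen of the stated order.
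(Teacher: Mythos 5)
Your proposal follows essentially the same route as the paper: apply Theorem~\ref{TD_convergence0}, verify $B_{t+1}\le A_t$ by balancing the slack $2\mu\gamma_t\theta_t$ against $\theta_{t+1}-\theta_t$ plus the noise and bias inflations (with $C\rho^\tau\le\mu/9$ from \eqref{def_tau} and $t_0\asymp(\tau+1)(\bar L^2+\varsigma^2)/\mu^2$ absorbing the rest), and then bound $A_k$, $B_1$, $D$ exactly as in the paper's closing calculations. The only quibble is your justification that $A_k\ge(k+t_0)(k+t_0+1)$: the condition you want is $\gamma_k\bar L^2\le\mu$ (so that $2\gamma_k^2\bar L^2\le 2\mu\gamma_k$), not $\gamma_k\bar L\le\mu/\sqrt 2$, though the needed inequality does hold for the stated $t_0$.
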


\begin{proof}
	In order to check \eqnok{cond_10}, we observe that
	\begin{align*}
	A_t-B_{t+1} & \geq\theta_t (1+2\mu\gamma_t-2\gamma_t^2\bar{L}^2) - \theta_{t+1}\big(1+\tau\gamma_{t+1}\gamma_{t-\tau+1}(16\bar{L}^2+2\varsigma^2)\big) - \theta_{t+\tau+1}\gamma_{t+\tau+1} (2C\rho^\tau + 7 \tau \gamma_t \bar{L}^2)\\
	&\geq \theta_t( 1+2\mu\gamma_t) - \theta_{t+1}(1+\tfrac{3}{4}\mu\gamma_t+\tfrac{1}{4}\mu\gamma_t) \geq (t+t_0+1)\big[ \tfrac{(t+t_0)(t+t_0+3)}{t+t_0-1} - \tfrac{(t+t_0+2)^2}{t+t_0}\big] \geq 0.
	\end{align*}
	Here, the first inequality follows from the definition of $A_t$ and $B_t$, the 
	\textcolor{black}{second and third inequality follow}
	 from the selection of $\rho^\tau$ and $t_0$.
	The result then follows from Theorem~\ref{TD_convergence0} and the following 
	simple calculations.
	\begin{align*}
	B_1 &= \theta_1\big(1+\gamma_1^2 (16\bar{L}^2+2\varsigma^2)\big)+\tsum_{t'=1}^{\tau+1}2\theta_{t'}\gamma_{t'}C\rho^{t'-1}+\tsum_{t'=2}^{\tau+1}3 \theta_{t'}\gamma_{t'}\gamma_1 \bar{L}^2+\tsum_{t'=2}^{\tau+1}\big( \tsum_{t''=1}^{t'-1}4\theta_{t'}\gamma_{t'}\gamma_{t''}\bar{L}^2\big)\\
	&\leq t_0(t_0+1) + \tfrac{4C(t_0+\tau+4)}{\mu(1-\rho)} + \tfrac{3\tau(\tau+2)\bar{L^2}}{\mu^2},\\
	D &= \tsum_{t=1}^{k-\tau} \big(\tsum_{t'=t}^{t+\tau}\theta_{t'}\gamma_{t'}\gamma_t\big)+
	\tsum_{t=k-\tau+1}^{k} \big(\tsum_{t'=t}^{k}\theta_{t'}\gamma_{t'}\gamma_t\big)\\	
	&\leq \tfrac{4k(\tau+1)(t_0+\tau)(t_0+\tau+1)}{\mu^2 (t_0+1)(t_0+\tau)} \leq \tfrac{5k(\tau+1)}{\mu^2}.
	\end{align*}
\end{proof}

In view of Corollary \ref{step_size_TD1} the number of iterations performed by the temporal difference algorithm to find a solution $ \bar{x} \in X$ s.t. $ \mathbb{E}[V(\bar{x}, x^*)] \leq \epsilon $ is bounded by 
{\color{black}
$$
\mathcal{O}\{ \max\big( (\tfrac{\underline{\tau} (\bar{L}^2+\varsigma^2)}{\mu^2}+ 
\sqrt{\tfrac{\underline{\tau}  (\bar{L}^2+\varsigma^2)}{\mu^{3} (1-\rho)}}) 
\sqrt{\tfrac{V(x_1, x^*)}{\epsilon}} , \tfrac{\underline{\tau}}{\mu^2 \epsilon}(\sigma^2+\|F(x^*)\|^2_*)\big)   \}.
$$}
To our knowledge this is the first analysis of stochastic GSMVIs in the Markovian noise setting. When $F(x^*)=0$, the nature of the bound suggests the benefits of applying mini-batch to reduce $\sigma$ in terms of the resulting convergence rate.  
Note that when $\sigma = 0, \varsigma=0$ (the deterministic case) and  $F(x^*)= 0$, the convergence rate achieved in Corollary \ref{step_size_TD1} is not linear and hence
not optimal. Assuming that the total number of iterations $k$ is given in advance, we can select a novel stepsize policy that  improves this convergence rate.

\begin{corollary}
	\label{step_size_TD1_constant}
	Assume that \eqnok{G_monotone1} holds for some $\mu > 0$, let $ \underline{\tau} $ be defined in \eqnok{def_tau}. If
	$\tau \geq \underline{\tau}$
	and
	$$
	{\color{black}\gamma_t =\gamma= \min\{\tfrac{3\mu }{(\tau+1)(92\bar{L}^2+8\varsigma^2)}, }\tfrac{q\log k}{\mu k}\},~~~ \theta_t  =\theta^t= (\mu \gamma+1)^{t},  
	$$
	%~~~ \rho^\tau \leq \tfrac{\mu }{9C},
	where
	$
	q = 2(1+\tfrac{\log(2\mu^2MV(x_{1},x^*)/[(\tau+1)(4\sigma^2+3\|F(x^*)\|_*^2)])}{\log k}),
	$
	then
	\begin{align*}
	\bbe[V(x_{k+1},x^*)] 
	\leq \big(1+\tfrac{3\mu^2 }{(\tau+1)(92\bar{L}^2+8\varsigma^2)} \big)^{-k}MV(x_{1},x^*) + \tfrac{(1+2q\log k)(\tau+1)(4\sigma^2+3\|F(x^*)\|_*^2)}{2\mu^2 k},
	\end{align*}
	with $
	M = 1 + \tfrac{2[(\theta\rho)^\tau-1]\gamma C}{\theta\rho-1} + \tfrac{4\bar L^2(\theta^{\tau+1}-1)}{\mu^2}.
	$
\end{corollary}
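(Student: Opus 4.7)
The plan is to apply Theorem~\ref{TD_convergence0} directly with the specified constant stepsize $\gamma_t=\gamma$ and geometric weights $\theta_t=(1+\mu\gamma)^t$, paralleling the argument of Corollary~\ref{step_size_TD1}. The work splits into three ingredients: (i) verifying the compatibility condition \eqnok{cond_10} between $\{A_t\}$ and $\{B_t\}$; (ii) bounding the initial-error coefficient $B_1/A_k$; and (iii) bounding the stochastic-error coefficient $D/A_k$.

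For (i), since $\theta_{t+j}/\theta_t=(1+\mu\gamma)^j$, dividing $A_t\ge B_{t+1}$ through by $\theta_t$ reduces the generic case $t\in(\tau,k-\tau]$ to a schematic inequality of the form $1+2\mu\gamma-2\gamma^2\bar L^2\ge (1+\mu\gamma)[1+(\tau+1)\gamma^2 E]+2(1+\mu\gamma)^{\tau+1}\gamma C\rho^\tau+c(\tau+1)(1+\mu\gamma)^{\tau+1}\gamma^2\bar L^2$ for an absolute constant $c$. The hypothesis $\tau\ge\underline{\tau}$ forces $9C\rho^\tau\le\mu$, and the first branch $\gamma\le 3\mu/[(\tau+1)(92\bar L^2+8\varsigma^2)]$ keeps $(\tau+1)\mu\gamma$ uniformly small, so that $(1+\mu\gamma)^{\tau+1}\le e$. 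Each excess term on the right is then at most a small multiple of $\mu\gamma$, and the numerical constants $92$ and $8$ are arranged so that the total excess over $1$ fits inside the left-hand slack $2\mu\gamma-2\gamma^2\bar L^2$. The boundary cases $t\le\tau$ and $t>k-\tau$ involve truncated sums and follow by the same calculation.

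For (ii), since $A_k\ge(1+\mu\gamma)^k$, it suffices to bound $B_1$. The three finite sums in the definition of $B_1$ collapse via geometric series with ratios $(1+\mu\gamma)\rho$ and $1+\mu\gamma$, together with the telescoping identity $\theta^{j+1}-\theta=\mu\gamma\sum_{i=1}^{j}\theta^i$ applied to the double-Lipschitz sum; these simplifications yield $B_1\le\theta_1\cdot M$ with $M$ as stated, producing the first term $(1+\mu\gamma)^{-k}MV(x_1,x^*)$. For (iii), exchanging the order of summation gives $D\le(\tau+1)\gamma^2\sum_{t=1}^{k}\theta_{t+\tau}\le(\tau+1)\gamma(1+\mu\gamma)^\tau\theta_k/\mu$, hence $D/A_k\le(\tau+1)\gamma(1+\mu\gamma)^\tau/\mu$. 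Along the second branch $\gamma\le q\log k/(\mu k)$ the factor $(1+\mu\gamma)^\tau$ stays bounded by a constant, and $\gamma$ contributes $q\log k/(\mu k)$, producing the displayed $(\tau+1)(1+2q\log k)/(2\mu^2 k)$ coefficient in the stochastic term.

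A subtlety is that on the second branch $\gamma<\gamma^*:=3\mu/[(\tau+1)(92\bar L^2+8\varsigma^2)]$, so $(1+\mu\gamma)^{-k}>(1+\mu\gamma^*)^{-k}$ and the first-branch exponential factor is not automatically an upper bound on the raw initial-error estimate. The choice of $q$ is tailored precisely to resolve this: when the second branch is active, $(1+\mu\gamma)^k\ge k^{q/2}$, and the definition of $q$ yields $MV(x_1,x^*)k^{-q/2}\le(\tau+1)(4\sigma^2+3\|F(x^*)\|_*^2)/(2\mu^2 k)$, so the raw initial-error term is absorbed into the stochastic term and only the first-branch factor $(1+\mu\gamma^*)^{-k}MV(x_1,x^*)$ needs to be retained. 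The main obstacle will be step~(i): sharply tracking the many competing quadratic and bias contributions amplified by $(1+\mu\gamma)^{\tau+1}$ and $(\tau+1)$, all of which must be absorbed into the $2\mu\gamma$ slack, and it is this polynomial accounting that dictates the numerical constants $92$ and $8$ appearing in the stepsize bound.
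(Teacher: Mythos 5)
Your proposal follows essentially the same route as the paper's proof: invoke Theorem~\ref{TD_convergence0} with the constant stepsize and geometric weights, verify \eqnok{cond_10} by absorbing the $(\tau+1)\gamma^2 E$, bias, and Lipschitz excess terms (each amplified by $\theta^{\tau+1}\approx 1$) into the $2\mu\gamma$ slack, bound $B_1\le\theta_1 M$ by geometric series, bound $D/A_k\le(\tau+1)\gamma(1+\mu\gamma)^{O(\tau)}/\mu$, and use the choice of $q$ so that on the second branch of the $\min$ the residual $MV(x_1,x^*)k^{-q/2}$ is exactly absorbed into the stochastic term. The plan, including the handling of the two stepsize branches, matches the paper's argument, so no further comparison is needed.
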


\begin{proof}
	In order to check \eqnok{cond_10}, we observe that
	\begin{align*}
	A_t-B_{t+1} & \geq\theta_t (1+2\mu\gamma_t-2\gamma_t^2\bar{L}^2)- \theta_{t+1}(1+\tau\gamma_{t+1}\gamma_{t-\tau+1}\big(16\bar{L}^2+2\varsigma^2)\big) - \theta_{t+\tau+1}\gamma_{t+\tau+1}
	(2C\rho^\tau + 7 \tau\gamma_t \bar{L}^2)\\
	&\geq \theta_t\big[ \mu\gamma -\theta^{\tau+1}(\tau+1)\gamma^2(23\bar{L}^2+2\varsigma^2)-2\theta^{\tau+1} C\rho^\tau \gamma
	\big] \geq \theta_t\big[ \mu\gamma -\tfrac{3}{4}\mu\gamma-\tfrac{1}{4}\mu\gamma] \geq 0.
	\end{align*}
	The result then follows from Theorem~\ref{TD_convergence} and the following 
	calculations.
	\begin{align*}
	\tfrac{B_1}{ A_k}V(x_{1},x^*)&\leq (\mu\gamma+1)^{-k}MV(x_{k+1},x^*) \leq (1+\tfrac{3\mu^2 }{(\tau+1)(92\bar{L}^2+8\varsigma^2)})^{-k}MV(x_{1},x^*) + \tfrac{MV(x_{1},x^*)}{k^{\frac{q}{2}}}\\
	&=(1+\tfrac{3\mu^2 }{(\tau+1)(92\bar{L}^2+8\varsigma^2)})^{-k}MV(x_{1},x^*)  + \tfrac{(\tau+1)(4\sigma^2+3\|F(x^*)\|_*^2)}{2\mu^2k},\\
	%\tsum_{t=1}^{k-1} \theta_t &= \tfrac{k^3}{3}+ t_0 k^2+(t_0^2 - \tfrac{1}{3}) k - t_0^2 - t_0 \ge \tfrac{k^3}{3} + t_0^2, \forall k \ge 2, \\
	\tfrac{D}{A_k} &
	\leq \tsum_{t=1}^k (\tau+1) \theta_t\gamma_t^2\theta_k^{-1}  \leq \tfrac{(\tau+1)\gamma (1+\mu\gamma)}{\mu} \leq \tfrac{2(\tau+1)q\log k}{\mu^2k}.
	\end{align*}
\end{proof}

In view of Corollary \ref{step_size_TD1_constant} the number of iterations performed by the temporal difference algorithm to find a solution $ \bar{x} \in X$ s.t. $ \mathbb{E}[V(\bar{x}, x^*)] \leq \epsilon $ is bounded by 
%$$
%\mathcal{O}\{ \max( \frac{\tau \bar{L}^2}{\mu^2}
%(\log{\frac{V(x_1, \bar{x})}{\epsilon}} + \log(\tfrac{C\mu}{L^2(\tau+1)(1-\rho)})), \frac{\tau \sigma^2}{\mu^2 \epsilon} \log{\frac{1}{\epsilon}} \}) 
%$$
{\color{black}
$$
\mathcal{O}\{ \max( \tfrac{\underline{\tau} (\bar{L}^2+\varsigma^2)}{\mu^2}
\log{\tfrac{V(x_1, x^*)}{\epsilon(1-\rho)}} , \tfrac{\underline{\tau}(\sigma^2+\|F(x^*)\|_*^2)}{\mu^2 \epsilon} \log{\tfrac{1}{\epsilon}}) \}.
$$}
This complexity bound is nearly optimal, \textcolor{black} {up to a logarithmic factor, in terms of the dependence on $\epsilon$ for solving GSMVI problems in a Markovian setting.} Note that when 
$\sigma = 0$, $ \varsigma = 0 $ (i.e., the deterministic case) and  $F(x^*)= 0$, the convergence rate achieved in Corollary~\ref{step_size_TD1_constant} with a constant step-size is  linear.
Given the nature of the step-size under consideration, implementation of the algorithm requires estimation of $\sigma $  
as well as $\tau$. The latter realization provides motivation for our next algorithm applying updates in perioric intervals of length 
$ \tau$ while achieving several appealing properties.

\subsection{Conditional temporal difference algorithm}   

In the previous considerations the analysis was facilitated by \eqref{bound_delta_inner1} and
\eqref{bound_delta_norm1}. In the current setting we will incorporate the parameter $\tau$ in 
the algorithmic design.
We refer to the new algorithm as the conditional TD algorithm given that the parameter $\tau$ needs to satisfy a condition as exemplified in Corollary  \ref{step_size_TD_skipping}.
Between each two updates of the sequence of iterates $ \{ x_t \} $, one collects $\tau$  samples without updating $ \{ x_t \} $. 
The idea of collecting several samples before updating the iterate is not completely new.  It 
has been explored in practice as well as for theoretical purposes \cite{bresler2020squares}.
The authors in \cite{bresler2020squares}   associate the time interval between updates with the mixing time of the underlying Markov process, and hence this interval depends on the target accuracy. In our work $\tau$ is chosen as 
  a constant relating both to the convergence properties of the underlying Markov process, as well as the modulus of strong monotonicity. 
Furthermore we provide the first concrete  explanation of the algorithmic advantages of using the periodic updating scheme in terms of the improved convergence rate and relaxed assumptions for the analysis. Specifically 	\textcolor{black}{Assumption} A
is not needed for CTD  and the ensuing FTD algorithm.

\begin{algorithm}[H]  \caption{Conditional Temporal Difference Algorithm}  
	\label{alg:TD_skipping}
	\begin{algorithmic} 
		\STATE{Let $x_1 \in X$, and the nonnegative parameters $\{ \gamma_t\}$ be given. }
		\FOR{$ t = 1, \ldots, k$}
		\STATE { Collect $\tau$ state transition steps without updating the sequence of iterates $\{x_t\}$, denoted as $\{\xi_t^1, \xi_t^2, \dots, \xi_t^\tau\}$.}
		\beq \label{TD_skipping_step}
		x_{t+1} = \argmin_{x\in X} ~   \gamma_t \big\langle \tilde{F}(x_t,\xi_t^\tau)  , x \big\rangle + V(x_t, x).
		\eeq
		%		to obtain output $x_{t+1}$.
		\ENDFOR
		%		\RETURN $x_{k+1}$.
	\end{algorithmic}
\end{algorithm}

Henceforth for a given sequence of iterates $\{x_t\}$ and $\{\xi_t^\tau\}$ we will use the notation 
\beq \label{def_Delta_V}
\Delta F_t := F(x_t) - F(x_{t-1})
\ \ \mbox{and} \ \
\delta_t^\tau := \tilde{F}(x_t, \xi_t^\tau) - F(x_t),
\eeq
and $\delta_t^\tau$ denote the error associated with the computation of the operator $F(x_t)$. 

\begin{comment}
Lemma~\ref{lemma_projection} below states a well-known technical result, often referred to ``three-point lemma" (see, e.g., Lemma 3.1 of \cite{LanBook2020}), 
which characterizes the optimality condition of \eqnok{TD_skipping_step}. ({\bf This result has been stated once in the previous section})

\begin{lemma} \label{lemma_projection}
Let $x_{t+1}$ be defined in \eqnok{TD_skipping_step}. Then,
\beq \label{opt_deter_step}
\gamma_t \langle  \tilde{F}(x_t,\xi_t^\tau) , x_{t+1} - x \rangle + V(x_t, x_{t+1}) \leq V(x_t, x) - V(x_{t+1}, x), \forall x \in X.
\eeq
\end{lemma}

\end{comment}

\begin{proposition}\label{lemma_dist}
	Let $\{x_t\}$ be generated by Algorithm \ref{alg:TD_skipping}.
 		\textcolor{black}{Then} for any $x \in X$,
	\begin{align*}
	\gamma_t \langle  F(x_{t+1}) , x_{t+1}-x\rangle + (1-2\gamma_t^2L^2)V(x_{t+1},x)+  \gamma_t\langle \delta_t^\tau,x_{t}-x \rangle\leq V(x_t,x)+\gamma_t^2  \|\delta_t^\tau\|_*^2.
	\end{align*}
\end{proposition}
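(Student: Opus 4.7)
The plan is to mimic the structure of Proposition~\ref{lemma_dist1} but exploit the fact that the CTD iterate $x_{t+1}$ uses only the single sample $\xi_t^\tau$ drawn after the last update, so there is no need to introduce an auxiliary past iterate $x_{t-\kappa}$. Everything should drop out of the three-point lemma and the Lipschitz smoothness of $F$.

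First I would apply Lemma~\ref{lemma_wo_projection} to the update rule \eqref{TD_skipping_step} with $\tilde F(x_t,\xi_t^\tau)$ in place of $\tilde F(x_t,\xi_t)$, obtaining
\[
V(x_t,x) - V(x_{t+1},x) \;\ge\; \gamma_t\bigl\langle \tilde F(x_t,\xi_t^\tau),\, x_{t+1}-x\bigr\rangle + V(x_t,x_{t+1}).
\]
Then I would split the inner product via $\tilde F(x_t,\xi_t^\tau)=F(x_t)+\delta_t^\tau$ and further write
\[
\langle F(x_t),x_{t+1}-x\rangle = \langle F(x_{t+1}),x_{t+1}-x\rangle + \langle F(x_t)-F(x_{t+1}),x_{t+1}-x\rangle,
\]
and $\langle \delta_t^\tau,x_{t+1}-x\rangle=\langle\delta_t^\tau,x_t-x\rangle+\langle\delta_t^\tau,x_{t+1}-x_t\rangle$. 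The first decomposition isolates the desired target quantity $\gamma_t\langle F(x_{t+1}),x_{t+1}-x\rangle$ and the second isolates $\gamma_t\langle\delta_t^\tau,x_t-x\rangle$, leaving two ``error cross-terms'' to control.

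Next I would bound those two cross-terms using Cauchy--Schwarz and the Lipschitz estimate \eqref{Lipschitz}:
\[
\langle F(x_t)-F(x_{t+1}),x_{t+1}-x\rangle \ge -L\|x_t-x_{t+1}\|\,\|x_{t+1}-x\|,\qquad
\langle \delta_t^\tau,x_{t+1}-x_t\rangle \ge -\|\delta_t^\tau\|_*\,\|x_{t+1}-x_t\|.
\]
Then Young's inequality (applied twice, each time with weight $1/4$ on the $\|x_{t+1}-x_t\|^2$ side) gives
\[
\gamma_t L\|x_{t+1}-x_t\|\|x_{t+1}-x\| + \gamma_t\|\delta_t^\tau\|_*\|x_{t+1}-x_t\|
\le \tfrac12\|x_{t+1}-x_t\|^2 + \gamma_t^2 L^2\|x_{t+1}-x\|^2 + \gamma_t^2\|\delta_t^\tau\|_*^2.
\]
Using \eqref{strong_convex_V}, the term $\tfrac12\|x_{t+1}-x_t\|^2$ is absorbed by $V(x_t,x_{t+1})$, and $\|x_{t+1}-x\|^2 \le 2V(x_{t+1},x)$ converts the Lipschitz remainder into $2\gamma_t^2 L^2 V(x_{t+1},x)$, which I would move to the left-hand side to form the coefficient $(1-2\gamma_t^2L^2)$ on $V(x_{t+1},x)$.

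The proof is essentially routine algebra, but the one point requiring care is keeping the weights in the two Young applications consistent so that the combined $\|x_{t+1}-x_t\|^2$ coefficient does not exceed $\tfrac12$, which is the most that can be cancelled by the $V(x_t,x_{t+1})\ge\tfrac12\|x_t-x_{t+1}\|^2$ bound. With those weights set to $1/4$ each, everything balances and rearranging yields exactly the claimed inequality.
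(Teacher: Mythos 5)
Your proposal is correct and follows essentially the same route as the paper's proof: the same three-point lemma, the same two decompositions isolating $\gamma_t\langle F(x_{t+1}),x_{t+1}-x\rangle$ and $\gamma_t\langle\delta_t^\tau,x_t-x\rangle$, the same Cauchy--Schwarz/Lipschitz bounds on the two cross-terms, and the same pair of Young inequalities each charging $\tfrac14\|x_{t+1}-x_t\|^2$ against $V(x_t,x_{t+1})\ge\tfrac12\|x_{t+1}-x_t\|^2$, with $\|x_{t+1}-x\|^2\le 2V(x_{t+1},x)$ producing the $(1-2\gamma_t^2L^2)$ coefficient. No gaps.
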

\begin{proof} 
It follows from \eqnok{lemma_wo_projection} that
	\begin{eqnarray*} 
	\gamma_t \langle  F(x_t) , x_{t+1} - x \rangle +\gamma_t\langle \delta_t^\tau,x_{t+1}-x \rangle+ V(x_t, x_{t+1}) \leq V(x_t, x) - V(x_{t+1}, x), \forall x \in X.
	\end{eqnarray*}
	Adding and subtracting $\langle F(x_{t+1}), x_{t+1}-x \rangle$ on the left hand side, we obtain
	\begin{eqnarray} 
	\label{lemma_3}
	\gamma_t \langle  F(x_{t+1}) , x_{t+1} - x \rangle +D_t \leq V(x_t, x) - V(x_{t+1}, x),
	\end{eqnarray}
	in which
	\begin{eqnarray*} 
	D_t = \gamma_t \langle  \Delta F_t, x_{t+1} - x \rangle +\gamma_t\langle \delta_t^\tau,x_{t+1}-x \rangle+ V(x_t, x_{t+1}).
	\end{eqnarray*}
	Using \eqnok{strong_convex_V}  and the Lipschitz condition \eqref{Lipschitz} we can lower bound the term $D_t$ as follows:
	\begin{eqnarray*}
		D_t	&\geq&    \gamma_t \langle  \Delta F_t, x_{t+1} - x \rangle + \gamma_t\langle \delta_t^\tau,x_{t+1}-x_t \rangle + \gamma_t\langle \delta_t^\tau,x_{t}-x \rangle  + \tfrac{1}{2} \|x_{t+1}-x_t\|^2 \\
		& \geq &  -\gamma_tL\|x_{t+1}-x_t\|\|x_{t+1}-x\| +  \tfrac{1}{4} \|x_{t+1}-x_t\|^2 -\gamma_t\|\delta_t^\tau\|_*\|x_{t+1}-x_t\| +  \tfrac{1}{4} \|x_{t+1}-x_t\|^2 +  \gamma_t\langle \delta_t^\tau,x_{t}-x \rangle \\
		&\geq&    -\gamma_t^2 L^2\|x_{t+1}-x\|^2 - \gamma_t^2  \|\delta_t^\tau\|_*^2 + \gamma_t \langle \delta_t^\tau,x_{t}-x \rangle \geq    -2\gamma_t^2 L^2V(x_{t+1},x) - \gamma_t^2  \|\delta_t^\tau\|_*^2 + \gamma_t \langle \delta_t^\tau,x_{t}-x \rangle.
	\end{eqnarray*} 
	Here, the first inequality follows from \eqnok{strong_convex_V}, the second inequality follows from Cauchy-Schwarz inequality and \eqref{Lipschitz}, the third inequality follows from Young's inequality, the fourth inequality follows from \eqnok{strong_convex_V}.
	Then, rearranging the terms in \eqref{lemma_3}, we obtain the result.
	%\begin{eqnarray*}
	%	\gamma_t \langle  F(x_{t+1}) , x_{t+1}-x\rangle + (1-2\gamma_t^2L^2)V(x_{t+1},x)+ \gamma_t \langle \delta_t^\tau,x_{t}-x \rangle\leq V(x_t,x)+\gamma_t^2  \|\delta_t^\tau\|_*^2.
	%\end{eqnarray*}
	\end{proof}

Now we consider the generalized strongly monotone VIs which satisfy \eqnok{G_monotone1}
for some $\mu > 0$.

\begin{theorem}\label{TD_convergence}
\textcolor{black}{Let \textit{Assumptions C} and \textit{D} hold.} Assume that \eqnok{G_monotone1} holds for some $\mu > 0$.
	Let $x^*$ be a solution of problem  \eqref{VIP}, and $\{\theta_t\} $ a sequence of nonnegative numbers.  Suppose that the parameters $\{\g_t\}$, $\{\t_t\}$  satisfy
	\begin{align}
	 \theta_t(1+4\gamma_t^2  C^2\rho^{2\tau}+ 2\gamma_tC\rho^\tau+2\gamma_t^2\varsigma^2) \leq \theta_{t-1}(1+2\mu\g_{t-1}-2\gamma_{t-1}^2L^2), \label{eqn:4}
	\end{align}	
		Then for all $k \geq 1 $,
	$$
	\theta_k(1+2\mu\g_k-2\gamma_k^2L^2)\mathbb{E}[V(x_{k+1},x^*)]  \leq  \theta_1(1+4\gamma_1^2  C^2\rho^{2\tau}+ 2\gamma_1C\rho^\tau+2\gamma_1^2\varsigma^2)  V(x_1,x^*)+ \tsum_{t=1}^k \theta_t\gamma_t^2 \sigma^2.
	$$
\end{theorem}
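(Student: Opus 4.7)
The plan is to combine the deterministic one-step recursion from Proposition~\ref{lemma_dist} with the probabilistic bounds on $\delta_t^\tau$ supplied by Assumptions C and D, then multiply by $\theta_t$ and telescope using hypothesis \eqref{eqn:4}. First, I would instantiate Proposition~\ref{lemma_dist} at $x=x^*$ and use generalized strong monotonicity \eqref{G_monotone1} to lower bound $\gamma_t\langle F(x_{t+1}), x_{t+1}-x^*\rangle \geq 2\mu\gamma_t V(x_{t+1}, x^*)$. Rearranging yields the pointwise inequality
\begin{equation*}
(1+2\mu\gamma_t - 2\gamma_t^2 L^2)\,V(x_{t+1}, x^*) + \gamma_t\langle \delta_t^\tau, x_t - x^*\rangle \leq V(x_t, x^*) + \gamma_t^2\|\delta_t^\tau\|_*^2.
\end{equation*}

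Next, I would take expectations and bound the two stochastic terms on the right. The key structural observation is that $x_t$ is measurable with respect to the filtration generated by the samples up to iteration $t-1$; at iteration $t$ the algorithm then advances the Markov chain by $\tau$ additional steps before using $\xi_t^\tau$. Hence the conditional expectation of $\tilde F(x_t,\xi_t^\tau)$ is exactly the situation covered by Assumption D. Applying \eqref{bound_delta_inner1} together with the tower property gives
\begin{equation*}
-\mathbb{E}[\langle \delta_t^\tau, x_t - x^*\rangle] \leq C\rho^\tau\,\mathbb{E}[\|x_t - x^*\|^2] \leq 2C\rho^\tau\,\mathbb{E}[V(x_t, x^*)],
\end{equation*}
where the last step uses \eqref{strong_convex_V}. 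Similarly, Lemma~\ref{bound_delta_norm} (Assumptions C and D) yields
\begin{equation*}
\mathbb{E}[\|\delta_t^\tau\|_*^2] \leq \sigma^2 + (2C^2\rho^{2\tau}+\varsigma^2)\mathbb{E}[\|x_t-x^*\|^2] \leq \sigma^2 + 2(2C^2\rho^{2\tau}+\varsigma^2)\mathbb{E}[V(x_t,x^*)].
\end{equation*}
Combining these produces the one-step recursion
\begin{equation*}
(1+2\mu\gamma_t - 2\gamma_t^2 L^2)\mathbb{E}[V(x_{t+1}, x^*)] \leq (1+4\gamma_t^2 C^2\rho^{2\tau} + 2\gamma_t C\rho^\tau + 2\gamma_t^2\varsigma^2)\mathbb{E}[V(x_t,x^*)] + \gamma_t^2\sigma^2,
\end{equation*}
where the coefficients on the two sides match precisely the two sides of the stepsize condition \eqref{eqn:4}.

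Finally, I would multiply through by $\theta_t$ and invoke \eqref{eqn:4}, which rewrites the right-hand coefficient as $\theta_{t-1}(1+2\mu\gamma_{t-1}-2\gamma_{t-1}^2 L^2)$. Denoting $a_t := \theta_t(1+2\mu\gamma_t-2\gamma_t^2L^2)\mathbb{E}[V(x_{t+1},x^*)]$, the recursion collapses to $a_t \leq a_{t-1} + \theta_t\gamma_t^2\sigma^2$ for $t\geq 2$, with $a_1 \leq \theta_1(1+4\gamma_1^2 C^2\rho^{2\tau} + 2\gamma_1 C\rho^\tau + 2\gamma_1^2\varsigma^2)V(x_1,x^*) + \theta_1\gamma_1^2\sigma^2$. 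Telescoping from $t=1$ to $k$ then gives the desired bound.

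The only subtle point — and the step that deserves the most care rather than routine manipulation — is the correct use of the Markov structure in step two, namely that $\xi_t^\tau$ is generated by exactly $\tau$ transitions after the sigma-algebra with respect to which $x_t$ is measurable. This is where the conditional-update design of CTD pays off: it aligns the index $\tau$ used in the algorithm with the $\tau$ appearing in Assumptions C and D, so no additional ``past-iterate'' couplings (as in the TD analysis via the intermediate iterate $x_{t-\kappa}$ in Proposition~\ref{lemma_dist1}) are needed, and Assumption A can be dispensed with entirely.
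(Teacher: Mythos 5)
Your proposal is correct and follows essentially the same route as the paper's proof: instantiate Proposition~\ref{lemma_dist} at $x=x^*$, apply \eqref{G_monotone1}, take expectations and bound the bias and variance terms via \eqref{bound_delta_inner1} and Lemma~\ref{bound_delta_norm} together with \eqref{strong_convex_V}, then weight by $\theta_t$ and telescope under condition \eqref{eqn:4}. Your handling of the sign of the bias term (bounding $-\mathbb{E}[\langle\delta_t^\tau, x_t-x^*\rangle]$ directly rather than the absolute value) is if anything slightly cleaner than the paper's.
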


\begin{proof}
	By Proposition~\ref{lemma_dist} and strong monotone condition \eqnok{G_monotone1}, we have
	\begin{eqnarray*}
		(1+2\mu\g_t-2\gamma_t^2L^2)V(x_{t+1},x)+ \gamma_t \langle \delta_t^\tau,x_{t}-x \rangle\leq V(x_t,x)+\gamma_t^2  \|\delta_t^\tau\|_*^2.
	\end{eqnarray*} 
	Let us fix $x=x^*$ and take expectation on both sides of the inequality, we obtain
	\begin{eqnarray*}
		(1+2\mu\g_t-2\gamma_t^2L^2)\mathbb{E}[V(x_{t+1},x^*)]\leq \mathbb{E}[V(x_t,x^*)]+\gamma_t^2  \mathbb{E}[\|\delta_t^\tau\|_*^2]+ \gamma_t \mathbb{E}[|\langle \delta_t^\tau,x_{t}-x^* \rangle|],
	\end{eqnarray*} 
	which together with \eqnok{bound_delta_inner1} and \eqnok{bound_delta_norm1}, we obtain
	{\color{black}
	\begin{eqnarray}
		(1+2\mu\g_t-2\gamma_t^2L^2)\mathbb{E}[V(x_{t+1},x^*)]&\leq& \mathbb{E}[V(x_t,x^*)]+(2\gamma_t^2  C^2\rho^{2\tau}+ \gamma_tC\rho^\tau+\gamma_t^2 \varsigma^2)\mathbb{E}[\|x_t-x^*\|^2] + \gamma_t^2 \sigma^2\nn\\
		&\leq& (1+4\gamma_t^2  C^2\rho^{2\tau}+ 2\gamma_tC\rho^\tau+2\gamma_t^2 \varsigma^2)\mathbb{E}[V(x_t,x^*)] + \gamma_t^2 \sigma^2, \label{converge_step}
	\end{eqnarray}}
the second inequality follows from \eqnok{strong_convex_V}. Multiplying $\theta_t$ on both sides of \eqnok{converge_step}  and summing up from $t=1$ to $k$, we obtain
	\begin{align*}
	\tsum_{t=1}^k \theta_t(1+2\mu\g_t-2\gamma_t^2L^2)\mathbb{E}[V(x_{t+1},x^*)]
	\leq      \tsum_{t=1}^k \{\theta_t(1+4\gamma_t^2  C^2\rho^{2\tau}+ 2\gamma_tC\rho^\tau+2\gamma_t^2 \varsigma^2)\mathbb{E}[V(x_t,x^*)] 
	+  \theta_t\gamma_t^2 \sigma^2\},
	\end{align*}
	which together with  \eqnok{eqn:4}   then imply the desired result.
\end{proof}

We now specify the selection of a particular
stepsize policy for solving generalized strongly monotone
VI problems with Markovian noise.
\begin{corollary}\label{step_size_TD_skipping}
	%\textbf{At this point just verify the stepsize policy selection}
	Assume that \eqnok{G_monotone1} holds for some $\mu > 0$, let $ \underline{\tau} $ be defined in \eqnok{def_tau}. If
	$\tau \geq \underline{\tau}$
	and
	$$
	%t_0 = \tfrac{8 L^2}{  \mu^2}
	{\color{black}t_0 = \max\{\tfrac{8L^2}{\mu^2},\tfrac{16\varsigma^2}{\mu^2}\},} ~~~ \gamma_t= \tfrac{2}{\mu ( t_0 + t-1) },~~~ \theta_t  = (t + t_0)(t+t_0+1),
	$$
	%~~~ \rho^\tau \leq \tfrac{\mu}{5C},
	then
	\begin{align}\label{corr_CTD1}
	\bbe[V(x_{k+1},x^*)] 
	\leq \tfrac{2( t_0 + 1)(t_0+2) V(x_1,x^*)}{ (k + t_0)(k+t_0+1)}  +  \tfrac{6k \sigma^2}{\mu^2  (k + t_0)(k+t_0+1)}.
	\end{align}
\end{corollary}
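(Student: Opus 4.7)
The corollary is a direct specialization of Theorem~\ref{TD_convergence}; the plan is (i)~verify the recurrence~\eqnok{eqn:4} for the proposed stepsizes, and then (ii)~simplify the resulting master bound by plugging in the closed forms.

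For step~(i), I would use the three conditions on the parameters in turn. The choice $\tau\geq\underline{\tau}$ together with~\eqnok{def_tau} yields $C\rho^\tau\leq\mu/9$, which lets me absorb the bias terms as $2\gamma_t C\rho^\tau \leq 2\mu\gamma_t/9$ and $4\gamma_t^2 C^2\rho^{2\tau}\leq (4/81)(\mu\gamma_t)^2$. The choice $t_0\geq 8L^2/\mu^2$ forces $\gamma_{t-1}\leq \mu/(4L^2)$, hence $2\gamma_{t-1}^2 L^2\leq \mu\gamma_{t-1}/2$, so that the right hand side of~\eqnok{eqn:4} is at least $\theta_{t-1}(1+\tfrac{3}{2}\mu\gamma_{t-1})$. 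The choice $t_0\geq 16\varsigma^2/\mu^2$ yields $2\gamma_t^2\varsigma^2\leq \mu\gamma_t/4$. With $\theta_t/\theta_{t-1}=(t+t_0+1)/(t+t_0-1)$ and $\mu\gamma_t=2/(t_0+t-1)$, the recurrence~\eqnok{eqn:4} reduces to an elementary inequality in $s:=t_0+t-1$ of the form $(s+1)(1+a/s+b/s^2)\leq s+2$ with $a,b$ absolute constants strictly smaller than the slack, which holds for all $s\geq t_0$ by the choice of $t_0$.

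For step~(ii), Theorem~\ref{TD_convergence} yields
\[
\theta_k(1+2\mu\gamma_k-2\gamma_k^2 L^2)\,\bbe[V(x_{k+1},x^*)] \leq \theta_1\bigl(1+4\gamma_1^2 C^2\rho^{2\tau}+2\gamma_1 C\rho^\tau+2\gamma_1^2\varsigma^2\bigr)V(x_1,x^*) + \textstyle\sum_{t=1}^k \theta_t\gamma_t^2\sigma^2.
\]
The left-hand prefactor is at least $\theta_k=(k+t_0)(k+t_0+1)$ since $2\mu\gamma_k\geq 2\gamma_k^2 L^2$ by the bound $\gamma_k\leq \mu/(4L^2)$. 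The bracket on the right at $t=1$ is bounded by an absolute constant (in fact at most $2$ under the stated choice of $t_0$, using $\mu\gamma_1\leq 2/t_0$ and $C\rho^\tau\leq\mu/9$), which together with $\theta_1=(t_0+1)(t_0+2)$ yields the first summand of~\eqnok{corr_CTD1}. The variance sum simplifies as
\[
\textstyle\sum_{t=1}^k \theta_t\gamma_t^2 \;=\; \tfrac{4}{\mu^2}\sum_{t=1}^k \tfrac{(t+t_0)(t+t_0+1)}{(t+t_0-1)^2} \;\leq\; \tfrac{6k}{\mu^2},
\]
and dividing through by the left-hand prefactor produces the second summand.

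The only genuinely nontrivial part is step~(i): the four distinct error sources (bias $C\rho^\tau$, operator smoothness $L$, state-dependent noise $\varsigma$, and Bregman curvature from $V(x_t,x_{t+1})$) must simultaneously fit inside the slack of order $1/(t+t_0+1)$ available in the ratio $\theta_t/\theta_{t-1}$, which is precisely why the three conditions on $\tau$, $t_0\mu^2/L^2$, and $t_0\mu^2/\varsigma^2$ are coupled in the statement. Beyond this algebraic verification the argument is routine bookkeeping parallel to Corollary~\ref{step_size_TD1}, and no new analytical ideas are needed.
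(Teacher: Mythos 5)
Your proposal is correct and follows essentially the same route as the paper's proof: you verify \eqnok{eqn:4} by absorbing the bias, smoothness, and state-dependent-noise terms into $\theta_t(1+\tfrac{3}{2}\mu\gamma_t)\ge\theta_{t+1}(1+\tfrac{1}{2}\mu\gamma_{t+1})$ exactly as the paper does, and then conclude with the same three estimates ($\theta_k(1+2\mu\gamma_k-2\gamma_k^2L^2)\ge\theta_k$, the initial bracket bounded by $2$, and $\sum_t\theta_t\gamma_t^2\le 6k/\mu^2$). No gaps; the remaining work is the routine arithmetic you indicate.
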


\begin{proof}
	In order to check \eqnok{eqn:4}, we observe that
	\begin{align*}
	 &\theta_{t}(1+2\mu\g_{t}-2\gamma_{t}^2L^2) - \theta_{t+1}(1+4\gamma_{t+1}^2  C^2\rho^{2\tau}+ 2\gamma_{t+1}C\rho^\tau+2\gamma_{t+1}^2\varsigma^2) \\
	 \geq~& \theta_t(1+\tfrac{3}{2}\mu\gamma_t)-\theta_{t+1}(1+\tfrac{1}{2}\mu\gamma_{t+1})\\
	 =~&(t+t_0+1)(t+t_0+2)(\tfrac{t+t_0}{t+t_0-1}-\tfrac{t+t_0+1}{t+t_0}) \geq 0.
	\end{align*}
	The result then follows from Theorem~\ref{TD_convergence} and the following 
	simple calculations.
	\begin{align*}
	\theta_k(1+2\mu\g_k-2\gamma_k^2L^2) &\geq (k+t_0)(k+t_0+1), \\
	\theta_1(1+4\gamma_1^2  C^2\rho^{2\tau}+ 2\gamma_1C\rho^\tau+2\gamma_1^2\varsigma^2) &\leq 2\theta_1 = 2 (t_0+1)(t_0+2) \\
	%\tsum_{t=1}^{k-1} \theta_t &= \tfrac{k^3}{3}+ t_0 k^2+(t_0^2 - \tfrac{1}{3}) k - t_0^2 - t_0 \ge \tfrac{k^3}{3} + t_0^2, \forall k \ge 2, \\
	\tsum_{t=1}^{k} \theta_t \gamma_t^2 &\le 	\tsum_{t=1}^{k}\tfrac{6}{\mu^2} = \tfrac{6k}{\mu^2}.
	\end{align*}
\end{proof}

In view of Corollary \ref{step_size_TD_skipping} the number of iterations performed by the conditional temporal difference algorithm to find a solution $ \bar{x} \in X$ s.t. $ \mathbb{E}[V(\bar{x}, x^*)] \leq \epsilon $ is bounded by 
{\color{black}
$$
\mathcal{O}\{ \max( \tfrac{\underline{\tau} (L^2+\varsigma^2)}{\mu^2}
\sqrt{\tfrac{V(x_1, x^*)}{\epsilon}} , \tfrac{\underline{\tau} \sigma^2}{\mu^2 \epsilon}  )\}.
$$}
We observe that the conditional temporal difference method with diminishing step-size improves \textcolor{black}{in terms of} the  convergence rate 
achieved in Corollary \ref{step_size_TD1} in particular when $ \rho$ is close to 1.

Assuming that the total number of iterations $k$ is given in advance, we can select a stepsize policy that  improves the convergence rate achieved in Corollary \ref{step_size_stoch_strong_mon_k_unknown}.

\begin{corollary}\label{step_size_TD_skipping_constant}
	%\textbf{At this point just verify the stepsize policy selection}
	Assume that \eqnok{G_monotone1} holds for some $\mu > 0$, let $ \underline{\tau} $ be defined in \eqnok{def_tau}. If
	$\tau \geq \underline{\tau}$
	and
	$$
	{\color{black}\gamma_t =\gamma= \min\{\tfrac{\mu}{6L^2},\tfrac{\mu}{8\varsigma^2}},\tfrac{q\log k}{\mu k}\},~~~ \theta_t  = (\mu \gamma+1)^{t}, 
	$$ 
	%~~~ \rho^\tau \leq \tfrac{\mu}{4C},
	in which
	$
	q = 2(1+\tfrac{\log(\mu^2V(x_1,x^*)/\sigma^2)}{\log k}),
	$
	then
	\begin{align*}
	\bbe[V(x_{k+1},x^*)] 
	\leq 2\big(1+\tfrac{\mu^2}{6L^2} \big)^{-k}V(x_1,x^*) + \tfrac{(1+4\log k+4\log\tfrac{\mu^2V(x_1,x^*)}{\sigma^2})\sigma^2}{\mu^2 k}.
	\end{align*}
\end{corollary}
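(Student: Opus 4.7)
The plan is to apply Theorem~\ref{TD_convergence} directly, in close analogy with the proof of Corollary~\ref{step_size_TD_skipping}, the only real work being the verification of the recursion condition \eqnok{eqn:4} for the new constant-stepsize/geometric-weight pair $(\gamma_t,\theta_t)=(\gamma,(\mu\gamma+1)^t)$. Since both $\gamma$ and the ratio $\theta_t/\theta_{t-1}=\mu\gamma+1$ are independent of $t$, condition \eqnok{eqn:4} collapses to the single inequality
\begin{equation*}
(\mu\gamma+1)\bigl(1+4\gamma^{2}C^{2}\rho^{2\tau}+2\gamma C\rho^{\tau}+2\gamma^{2}\varsigma^{2}\bigr)\ \le\ 1+2\mu\gamma-2\gamma^{2}L^{2}.
\end{equation*}

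First I would use the definition of $\underline\tau$ in \eqnok{def_tau}, which gives $C\rho^{\tau}\le \mu/9$, to control the mixing-induced terms by $2\gamma C\rho^{\tau}\le 2\mu\gamma/9$ and $4\gamma^{2}C^{2}\rho^{2\tau}\le 4\mu^{2}\gamma^{2}/81$. The stepsize cap $\gamma\le \mu/(8\varsigma^{2})$ gives $2\gamma^{2}\varsigma^{2}\le \mu\gamma/4$, and $\gamma\le \mu/(6L^{2})$ gives $2\gamma^{2}L^{2}\le \mu\gamma/3$. Plugging these in and expanding both sides, the inequality reduces to $\mu\gamma\le \tfrac13$, which is satisfied since $\mu\gamma\le \mu^{2}/(6L^{2})\le 1$ (otherwise the problem is trivial after one step). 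This is the main technical step; everything afterwards is routine.

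With \eqnok{eqn:4} verified, Theorem~\ref{TD_convergence} yields
\begin{equation*}
\bbe[V(x_{k+1},x^{*})]\ \le\ \frac{\theta_{1}\bigl(1+4\gamma^{2}C^{2}\rho^{2\tau}+2\gamma C\rho^{\tau}+2\gamma^{2}\varsigma^{2}\bigr)}{\theta_{k}(1+2\mu\gamma-2\gamma^{2}L^{2})}V(x_{1},x^{*})\ +\ \frac{\gamma^{2}\sigma^{2}\sum_{t=1}^{k}\theta_{t}}{\theta_{k}(1+2\mu\gamma-2\gamma^{2}L^{2})}.
\end{equation*}
For the deterministic term, I would bound the numerator factor by $(\mu\gamma+1)(1+\mu\gamma/2)\le(\mu\gamma+1)^{2}\le 2$ and the denominator factor below by $(\mu\gamma+1)^{k}$, which gives a contribution $\le 2(\mu\gamma+1)^{-k}V(x_{1},x^{*})\le 2(1+\mu^{2}/(6L^{2}))^{-k}V(x_{1},x^{*})$ whenever $\gamma=\mu/(6L^{2})$ is the binding term in the $\min$ (the other cases produce an even smaller geometric factor, so the stated bound still holds).

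For the stochastic term, I would evaluate the geometric sum $\sum_{t=1}^{k}(\mu\gamma+1)^{t}=\tfrac{(\mu\gamma+1)((\mu\gamma+1)^{k}-1)}{\mu\gamma}$, divide by $\theta_{k}$, and use $\mu\gamma\le 1$ to obtain a bound of $2\gamma\sigma^{2}/\mu$. Substituting $\gamma\le q\log k/(\mu k)$ with the stated choice of $q$ produces
\begin{equation*}
\frac{2q\log k\,\sigma^{2}}{\mu^{2}k}\ =\ \frac{\bigl(4\log k+4\log(\mu^{2}V(x_{1},x^{*})/\sigma^{2})\bigr)\sigma^{2}}{\mu^{2}k},
\end{equation*}
which is absorbed into the claimed $(1+4\log k+4\log(\mu^{2}V(x_{1},x^{*})/\sigma^{2}))\sigma^{2}/(\mu^{2}k)$. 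The chief obstacle is really the bookkeeping in the first paragraph: all three of $\rho^{\tau}$, $\varsigma^{2}$, and $L^{2}$ generate error terms that must be simultaneously controlled by the single budget $\mu\gamma$ with room to spare for the $\mu^{2}\gamma^{2}$ cross term produced by the geometric weights.
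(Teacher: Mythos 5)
Your overall strategy is the same as the paper's: collapse condition \eqnok{eqn:4} to a single inequality for the constant pair $(\gamma,\theta_t)=(\gamma,(\mu\gamma+1)^t)$, charge the mixing, variance and Lipschitz error terms against fractions of the budget $\mu\gamma$ using $C\rho^\tau\le\mu/9$ and the two stepsize caps, and then apply Theorem~\ref{TD_convergence} and bound the two resulting terms (the contraction term and the geometric sum $\tsum_t\theta_t\gamma_t^2\sigma^2$). That part, and your treatment of the stochastic term, match the paper's proof.

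The gap is in the deterministic term. You assert that when the $\min$ defining $\gamma$ is attained at a term other than $\tfrac{\mu}{6L^2}$, ``the other cases produce an even smaller geometric factor.'' The inequality goes the other way: a smaller stepsize gives a \emph{slower} contraction, so $(\mu\gamma+1)^{-k}$ is \emph{larger} than $(1+\tfrac{\mu^2}{6L^2})^{-k}$, not smaller. In particular, when $\gamma=\tfrac{q\log k}{\mu k}$ is the binding term, the bound $2(\mu\gamma+1)^{-k}V(x_1,x^*)\le 2(1+\tfrac{\mu^2}{6L^2})^{-k}V(x_1,x^*)$ fails, and the stated conclusion does not follow from your argument as written. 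The paper closes exactly this case by noting that then $(\mu\gamma+1)^{-k}\le e^{-\mu\gamma k/2}=k^{-q/2}$, and the choice $q=2\big(1+\tfrac{\log(\mu^2V(x_1,x^*)/\sigma^2)}{\log k}\big)$ is engineered so that $V(x_1,x^*)\,k^{-q/2}=\tfrac{\sigma^2}{\mu^2 k}$; this leftover is precisely the ``$1$'' inside the factor $(1+4\log k+4\log\tfrac{\mu^2V(x_1,x^*)}{\sigma^2})$ of the second term, which your accounting leaves unused. Inserting this two-case split (contraction at rate $1+\tfrac{\mu^2}{6L^2}$ when the first term of the $\min$ binds, versus an additive $\sigma^2/(\mu^2k)$ when the logarithmic term binds) repairs the proof and makes it coincide with the paper's.
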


\begin{proof}
	In order to check \eqnok{eqn:4}, we observe that
	\begin{align*}
	&\theta_{t}(1+2\mu\g_{t}-2\gamma_{t}^2L^2) - \theta_{t+1}(1+4\gamma_{t+1}^2  C^2\rho^{2\tau}+ 2\gamma_{t+1}C\rho^\tau+2\gamma_{t+1}^2\varsigma^2) \\
	%=~& \theta_t\big[1+2\mu\g_{t}-2\gamma_{t}^2L^2 - (1+\mu\gamma)(1+4\gamma_{t+1}^2  C^2\rho^{2\tau}+ 2\gamma_{t+1}C\rho^\tau)\big]\\
	\geq~&\theta_t\big[1+2\mu\g-2\gamma^2L^2 - (1+\mu\gamma)(1+\tfrac{1}{4}\gamma^2  \mu^2+ \tfrac{1}{2}\gamma\mu+ \tfrac{1}{4}\gamma\mu)\big]\\
	\geq~ & \theta_t\big[ \tfrac{1}{2}\mu\gamma-\tfrac{3}{4}\mu^2\gamma^2-2\gamma^2L^2-\tfrac{1}{4}\mu^3 \gamma^3\big]\geq 0.
	\end{align*}
	The result then follows from Theorem~\ref{TD_convergence} and the following 
	calculations,
	\begin{align*}
	\tfrac{\theta_1(1+2\gamma_1^2  L^2\rho^{2\tau}+ 2\gamma_1C\rho^\tau+2\gamma_1^2\varsigma^2) }{ \theta_k(1+2\mu\g_k-2\gamma_k^2L^2)}V(x_1,x^*)&\leq (\mu\gamma+1)^{-k}V(x_1,x^*) \leq (1+\tfrac{\mu^2}{6L^2})^{-k}V(x_1,x^*) + \tfrac{V(x_1,x^*)}{k^{\frac{q}{2}}}\\
	&=(1+\tfrac{\mu^2}{6L^2})^{-k}V(x_1,x^*) + \tfrac{\sigma^2}{\mu^2k}\\
	%\tsum_{t=1}^{k-1} \theta_t &= \tfrac{k^3}{3}+ t_0 k^2+(t_0^2 - \tfrac{1}{3}) k - t_0^2 - t_0 \ge \tfrac{k^3}{3} + t_0^2, \forall k \ge 2, \\
	\tfrac{1}{ \theta_k(1+2\mu\g_k-2\gamma_k^2L^2)}\tsum_{t=1}^k \theta_t\gamma_t^2 \sigma^2 &
	\leq \tfrac{\gamma \sigma^2}{\mu} \leq \tfrac{\sigma^2q\log k}{\mu^2k}.
	\end{align*}
\end{proof}

In view of Corollary \ref{step_size_TD_skipping_constant} the number of iterations performed by the conditional temporal difference algorithm to find a solution $ \bar{x} \in X$ s.t. $ \mathbb{E}[V(\bar{x}, x^*)] \leq \epsilon $ is bounded by 
{\color{black}
$$
\mathcal{O}\{ \max( \tfrac{\underline{\tau} (L^2+\varsigma^2)}{\mu^2}
\log{\tfrac{V(x_1, x^*)}{\epsilon}} , \tfrac{\underline{\tau} \sigma^2}{\mu^2 \epsilon} \log{\tfrac{1}{\epsilon}} ) \}.
$$}
The second term of this complexity bound is nearly optimal, up to a logarithmic factor, for solving GSMVI problems under Markovian noise. 
In order to improve this complexity bound,
we develop a more advanced stepsize policy obtained by properly resetting the iteration
index to zero for the stepsize policy in Corollary~\ref{step_size_TD_skipping}.
In particular, the CTD iterations will be grouped into epochs indexed by $s$, and each epoch contains
$k_s$ iterations.
A local iteration index $\tilde t$, which is set to $0$ whenever a new epoch starts,
will take place of $t$ in the definitions of $\gamma_t$ and $\theta_t$ in Corollary~\ref{step_size_TD_skipping}.

\begin{corollary}\label{restart_stepsize}
Assume $\mu > 0$, let $ \underline{\tau} $ be defined in \eqnok{def_tau}. Set
$\tau \geq \underline{\tau}$,
 {\color{black}$t_0 = \max\{\tfrac{8L^2}{\mu^2},\tfrac{16\varsigma^2}{\mu^2}\}$}, and let
	$$k_s = \max\{(2\sqrt{2}-1)t_0+4, ~\tfrac{3\cdot2^{s+2}\sigma^2}{\mu^2 V(x_1,x^*)}\} ,~s\in \mathbb{Z}^+, ~~ K_0=0, ~~\mbox{and}~~K_s=\tsum_{s'=1}^{s}k_{s'}.
	$$
	For $t=1,2,...,$ introduce the epoch index $\tilde s$ and local iteration index $\tilde t$ such that
	$$\tilde s = \argmax_{\{s\in \mathbb{Z}^+\}}\mathbbm{1}_{\{K_{s-1}<t\le K_s\}}~~\mbox{and}~~
	\tilde t:=t-K_{\tilde s-1}.$$
	For the stepsize policy
	$$\gamma_t = \tfrac{2}{\mu(t_0+\tilde t-1)},~~~\theta_t = (\tilde t+t_0)(\tilde t+t_0+1),$$
	%~~~\rho^\tau \leq \tfrac{\mu}{5C},
	it holds that
	$
	\bbe[V(x_{K_s+1},x^*)]\leq 2^{-s}V(x_1,x^*)
	$ for any $s \ge 1$.
\end{corollary}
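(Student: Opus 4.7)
The plan is to prove the bound by induction on the epoch index $s$. The base case is essentially trivial: for $s = 0$ we have $K_0 = 0$, so $\bbe[V(x_{K_0+1},x^*)] = V(x_1,x^*) = 2^0 V(x_1,x^*)$.

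For the inductive step, the crucial observation is structural: when the algorithm enters epoch $s$ with the local iteration index $\tilde t$ reset to $0$, the stepsize $\gamma_t = \tfrac{2}{\mu(t_0+\tilde t-1)}$ and weight $\theta_t = (\tilde t+t_0)(\tilde t+t_0+1)$ used within epoch $s$ coincide exactly with the policy of Corollary~\ref{step_size_TD_skipping}. Therefore I can apply Corollary~\ref{step_size_TD_skipping} conditionally on $\mathcal{F}_{K_{s-1}}$, treating $x_{K_{s-1}+1}$ as the initial iterate and running $k_s$ iterations, to obtain
\begin{align*}
\bbe[V(x_{K_s+1},x^*)\mid \mathcal{F}_{K_{s-1}}] \leq \tfrac{2(t_0+1)(t_0+2)\,V(x_{K_{s-1}+1},x^*)}{(k_s+t_0)(k_s+t_0+1)} + \tfrac{6k_s\sigma^2}{\mu^2(k_s+t_0)(k_s+t_0+1)}.
\end{align*}
Taking unconditional expectation and invoking the inductive hypothesis $\bbe[V(x_{K_{s-1}+1},x^*)]\le 2^{-(s-1)}V(x_1,x^*)$ yields a recurrence that I must bound by $2^{-s}V(x_1,x^*)$.

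The two conditions imposed on $k_s$ are designed precisely so that each term on the right-hand side contributes at most $2^{-(s+1)}V(x_1,x^*)$. The requirement $k_s \geq (2\sqrt{2}-1)t_0+4$ is tailored to force $(k_s+t_0)(k_s+t_0+1)\geq 8(t_0+1)(t_0+2)$, which bounds the contraction coefficient in the first term by $\tfrac14$; combined with the inductive hypothesis this produces $\tfrac14\cdot 2^{-(s-1)}V(x_1,x^*) = 2^{-(s+1)}V(x_1,x^*)$. The requirement $k_s\geq \tfrac{3\cdot 2^{s+2}\sigma^2}{\mu^2 V(x_1,x^*)}$ gives $\tfrac{6k_s\sigma^2}{\mu^2(k_s+t_0)(k_s+t_0+1)} \leq \tfrac{6\sigma^2}{\mu^2 k_s} \leq 2^{-(s+1)}V(x_1,x^*)$. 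Summing the two contributions completes the induction.

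The only nontrivial algebraic step is verifying $(k_s+t_0)(k_s+t_0+1)\geq 8(t_0+1)(t_0+2)$ under $k_s\geq (2\sqrt{2}-1)t_0+4$. Expanding reduces this to $k_s^2+(2t_0+1)k_s \geq 7t_0^2+23t_0+16$; substituting the lower bound on $k_s$ and simplifying reduces the claim to $(18\sqrt{2}-24)t_0+4\geq 0$, which holds since $18\sqrt{2}>24$. I expect this small quadratic verification to be the only place where care is needed; the rest of the argument is a straightforward ``apply the per-epoch bound plus inductive hypothesis'' template, and the structural insight that the restart exactly reproduces the Corollary~\ref{step_size_TD_skipping} setting makes the whole proof short.
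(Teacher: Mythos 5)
Your proposal is correct and follows essentially the same route as the paper: both apply the per-epoch bound of Corollary~\ref{step_size_TD_skipping} with the restarted stepsize, use the first condition on $k_s$ to force the contraction factor $\tfrac{2(t_0+1)(t_0+2)}{(k_s+t_0)(k_s+t_0+1)}\le\tfrac14$ and the second to bound the noise term by $2^{-(s+1)}V(x_1,x^*)$, and then recurse over epochs. Your explicit quadratic verification $(18\sqrt{2}-24)t_0+4\ge 0$ is exactly the inequality implicit in the paper's bound $\tfrac{2(t_0+1)(t_0+2)}{(2\sqrt{2}t_0+5)(2\sqrt{2}t_0+4)}\le\tfrac14$.
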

\begin{proof}
	First we note  that in each epoch we use the stepsize policy of Corollary \ref{step_size_TD_skipping}. This enables us to infer that, for $s=1,2,\ldots,$
	\begin{align} \label{convergence_epoch}
	\bbe[V(x_{K_{s}+1},x^*)] 
	\leq \tfrac{2( t_0 + 1)( t_0 + 2)    \bbe[V(x_{K_{s-1}+1},x^*)]}{ (k_s + t_0 )(k_s + t_0 +1)}  +  \tfrac{6k_s \sigma^2}{\mu^2  (k_s + t_0)(k_s + t_0+1)}.
	\end{align}
	As such by taking the specification  of $k_s$ into account, we obtain
	\begin{align*}
	\bbe[V(x_{K_{1}+1},x^*)] 
	\leq &~\tfrac{2( t_0 + 1)  (t_0+2) V(x_{1},x^*)}{ (k_1 + t_0 + 1)  (k_1 + t_0)}  +  \tfrac{6(k_1+1) \sigma^2}{\mu^2  (k_1 + t_0 + 1)  (k_1 + t_0)}\\
	\leq &~ \tfrac{2( t_0 + 1)  (t_0+2) V(x_{1},x^*)}{ (2\sqrt{2}t_0 + 5)  (2\sqrt{2}t_0+4)} +  \tfrac{6 \sigma^2}{\mu^2  (k_1 + t_0 + 1) }
	\leq \tfrac{V(x_1,x^*)}{2}.
	\end{align*}
	The desired convergence result follows  by recursively using 	
	\eqref{convergence_epoch}.
\end{proof}

In view of corollary \ref{restart_stepsize},
the number of epochs performed by the CTD method to find a solution $\bar x \in X$ s.t. $\mathbb{E}[V(\bar x,x^*)] \le \epsilon$ is bounded by $\log_2 (V(x_1, x^*)/\epsilon)$. 
Then together with the length of each epoch, the total complexity is bounded by
{\color{black}
\beq \label{bound_CTD}
{\cal O}\{ \max( \tfrac{\underline{\tau} (L^2+\varsigma^2) }{\mu^2} \log\tfrac{V(x_1,x^*)}{\epsilon},  \tfrac{\underline{\tau} \sigma^2 }{\mu^2 \epsilon} ) \}.
\eeq}

\subsection{Fast temporal difference algorithm}   

We consider an accelerated Temporal Difference Learning algorithm with operator extrapolation. Between each two updates of the sequence of iterates $ \{ x_t \} $, it collects $\tau$ Markovian samples without updating $ \{ x_t \} $.
\textcolor{black}{We will distinguish two cases, namely whether the feasible region $X$ is bounded or not. In the latter case the \textcolor{black}{projecting} region at each instant of time  is potentially time-varying and will be denoted henceforth as $X_t$. We always ensure that the particular choice of $X_t$ should contain $X^*$. }

%{\color{black}Tianjiao's comment: for this part, we should discuss two situations: 1. when $X$ is bounded, then $\|x_t-x^*\|^2$ is also bounded, our results are still valid under this situation. 2. when $X$ is unbounded, then we cannot prove that $\|x_t-x^*\|^2$ is bounded. We need to add mini-batch or projection for the first several iterations. In order to achieve the optimal convergence rate in Corollary 3.17, we should use the projection method.}

\begin{algorithm}[H]  \caption{Fast Temporal Difference (FTD) Algorithm}  
	\label{alg:FastTD_skipping}
	\begin{algorithmic} 
		\STATE{{\color{black}Let $x_0 = x_1 \in X_1$}, and the nonnegative parameters $\{ \gamma_t\}$ and $\{\lambda_t\}$ be given. }
		\FOR{$ t = 1, \ldots, k$}
		\STATE { Collect $\tau$ state transition steps without updating the sequence of iterates $\{x_t\}$, denote as $\{\xi_t^1, \xi_t^2, \dots, \xi_t^\tau\}$.}
		\beq \label{FastTD_skipping_step}
		x_{t+1} = \argmin_{x\in {\color{black}X_{t+1}} }~   \gamma_t \big\langle \tilde{F}(x_t,\xi_t^\tau) + \lambda_t[\tilde{F}(x_t,\xi_t^\tau) -\tilde{F}(x_{t-1},\xi_{t-1}^\tau) ] , x \big\rangle + V(x_t, x).
		\eeq
		%		to obtain output $x_{t+1}$.
		\ENDFOR
		%		\RETURN $x_{k+1}$.
	\end{algorithmic}
\end{algorithm} 

Similar to Lemma~\ref{lemma_wo_projection},
we characterize the optimality condition of \eqnok{FastTD_skipping_step} as follows.
\begin{lemma} \label{lemma_projection_FastTD}
	Let $x_{t+1}$ be defined in \eqnok{FastTD_skipping_step}. Then we have \textcolor{black}{$\forall x \in X_{t+1}$},
	\beq \label{opt_FastTD_step}
	\gamma_t \langle  \tilde F(x_t,\xi_t^\tau) + \lambda_t \big( \tilde F(x_t,\xi_t^\tau) - \tilde F(x_{t-1},\xi_{t-1}^\tau) \big) , x_{t+1} - x \rangle + V(x_t, x_{t+1}) \leq V(x_t, x) - V(x_{t+1}, x)
	\eeq
\end{lemma}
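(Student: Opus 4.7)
The plan is to apply the standard three-point lemma for Bregman proximal updates, exactly as was done (implicitly) in Lemma~\ref{lemma_wo_projection} for the TD step. The only differences here are cosmetic: the ``gradient'' term is the extrapolated direction $G_t := \tilde F(x_t,\xi_t^\tau) + \lambda_t[\tilde F(x_t,\xi_t^\tau) - \tilde F(x_{t-1},\xi_{t-1}^\tau)]$, and the feasible set is the possibly time-varying $X_{t+1}$. Since neither of these changes affects the underlying argument (the subproblem is still a strongly convex minimization whose objective is a linear term plus a Bregman distance), the derivation is purely mechanical.

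First, I would write out the first-order optimality condition for the strongly convex subproblem in \eqref{FastTD_skipping_step}. Since $\nabla_x V(x_t, x) = \omega'(x) - \omega'(x_t)$, the variational inequality characterizing $x_{t+1} \in X_{t+1}$ reads, for every $x \in X_{t+1}$,
\begin{equation*}
\langle \gamma_t G_t + \omega'(x_{t+1}) - \omega'(x_t),\, x - x_{t+1}\rangle \geq 0,
\end{equation*}
which, after rearranging, gives
\begin{equation*}
\gamma_t \langle G_t,\, x_{t+1} - x\rangle \leq \langle \omega'(x_t) - \omega'(x_{t+1}),\, x - x_{t+1}\rangle.
\end{equation*}

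Next, I would invoke the standard three-point identity for Bregman distances, namely
\begin{equation*}
\langle \omega'(x_t) - \omega'(x_{t+1}),\, x - x_{t+1}\rangle = V(x_t, x) - V(x_{t+1}, x) - V(x_t, x_{t+1}),
\end{equation*}
which follows directly from the definition of $V$ in \eqref{Bregman_definition} by expanding all three Bregman terms and cancelling. Substituting this identity into the previous display and moving $V(x_t, x_{t+1})$ to the left-hand side yields \eqref{opt_FastTD_step}.

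There is no real obstacle here: the argument is identical to the familiar optimality/prox lemma, and the time-varying nature of $X_{t+1}$ is inconsequential because the inequality is quantified only over $x \in X_{t+1}$ (the same set that $x_{t+1}$ is drawn from). I would just need to take care that when $\omega$ is only subdifferentiable at $x_{t+1}$ (e.g., on the boundary of $X_{t+1}$), the same argument goes through with a suitable subgradient selection, which is already standard in the proximal-point literature and is the same technicality absorbed in Lemma~\ref{lemma_wo_projection}.
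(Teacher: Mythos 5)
Your argument is the standard three-point/prox-lemma derivation, which is exactly what the paper intends here (it states the result without proof, as ``similar to Lemma~\ref{lemma_wo_projection}''), and your final conclusion is correct: the extrapolated direction $G_t$ and the time-varying set $X_{t+1}$ indeed play no role beyond relabeling. The one thing to fix is that your two intermediate displays each carry a sign error on the left-hand side, and the two errors happen to cancel. With the convention that the prox-center of $V$ sits in the first argument (which is what your formula $\nabla_x V(x_t,x)=\omega'(x)-\omega'(x_t)$ assumes, and what the paper's usage requires despite the literal form of \eqnok{Bregman_definition}), the rearranged optimality condition should read
\begin{equation*}
\gamma_t \langle G_t,\, x_{t+1}-x\rangle \;\le\; \langle \omega'(x_{t+1})-\omega'(x_t),\, x-x_{t+1}\rangle ,
\end{equation*}
and the three-point identity is
\begin{equation*}
\langle \omega'(x_{t+1})-\omega'(x_t),\, x-x_{t+1}\rangle \;=\; V(x_t,x)-V(x_{t+1},x)-V(x_t,x_{t+1}).
\end{equation*}
As written, each of your displays is individually false (each is the negative of the correct statement), although substituting one into the other reproduces \eqnok{opt_FastTD_step}. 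With the signs corrected the proof is complete and coincides with the paper's (implicit) argument.
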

What follows is the counterpart of proposition \ref{lemma_dist} for the FTD algorithm.
\begin{proposition} \label{prop_FastTD}
	Let $\{x_t\}$ be generated by the FTD method and $\{\theta_t\} $ be a sequence of nonnegative numbers.
	If the parameters in this method satisfy 
	\begin{align}
	\theta_{t-1}\gamma_{t-1} &= \theta_{t}\gamma_{t}\lambda_{t},  \label{theta_t}\\
	\theta_{t-1} &\geq 16 L^2 \gamma_t^2 \lambda_t^2 \theta_t, \label{split_8}
	\end{align}
	for all $t = 2, \ldots, k$, then \textcolor{black}{for any $x \in \bigcap_{t=1}^{k+1} X_t$,}
	\begin{align}
	\tsum_{t=1}^k   \theta_t \big[ \gamma_t \langle  F(x_{t+1}) , x_{t+1} - x \rangle + V(x_{t+1},x)\big] - 4 L^2 \theta_k \gamma_k^2 \| x_{k+1} - x \|^2 + \tsum_{t=1}^k \theta_t\gamma_t\lambda_t\langle \delta_t^\tau,x_{t}-x \rangle \nn\\
	\leq \tsum_{t=1}^k \theta_t V(x_t,x) +  \tsum_{t=1}^k \left( \theta_t \gamma_t^2 \lambda_t^2 \|\delta_t^\tau - \delta_{t-1} ^\tau\|_*^2 \right)+4\theta_k\gamma_k^2 \|\delta_k^\tau\|_*^2  - \theta_k\gamma_k  \langle \delta_k^\tau , x_{k} - x \rangle. \label{prop_general_FastTD_result}
	\end{align}
\end{proposition}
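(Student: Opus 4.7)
The starting point is Lemma~\ref{lemma_projection_FastTD}. After substituting $\tilde F(x_t,\xi_t^\tau) = F(x_t) + \delta_t^\tau$, each summand splits into a deterministic piece $\gamma_t\langle F(x_t) + \lambda_t \Delta F_t, x_{t+1}-x\rangle$ and a noise piece $\gamma_t\langle \delta_t^\tau + \lambda_t(\delta_t^\tau-\delta_{t-1}^\tau), x_{t+1}-x\rangle$. My plan is to multiply the three-point inequality by $\theta_t$, sum over $t=1,\dots,k$, and then process the two pieces independently using hypotheses \eqref{theta_t}--\eqref{split_8} as the telescoping machinery. The $V(x_t,x_{t+1})$ terms from the three-point lemma serve as the reserve into which all residual $\|x_{t+1}-x_t\|^2$ cross terms get absorbed at the end via \eqref{strong_convex_V}.

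For the deterministic piece I would set $A_t := \langle F(x_{t+1})-F(x_t), x_{t+1}-x\rangle$ and use the identity
\begin{align*}
\langle F(x_t)+\lambda_t\Delta F_t, x_{t+1}-x\rangle = \langle F(x_{t+1}),x_{t+1}-x\rangle - A_t + \lambda_t A_{t-1} + \lambda_t\langle \Delta F_t, x_{t+1}-x_t\rangle.
\end{align*}
Multiplying by $\theta_t\gamma_t$ and summing, hypothesis \eqref{theta_t} collapses the $A_t$ terms to $-\theta_k\gamma_k A_k$ (the initialization $x_0=x_1$ eliminates $A_0$), which then moves to the right-hand side as $+\theta_k\gamma_k A_k$. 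Lipschitz continuity of $F$ together with a weight-calibrated Young's inequality yields $\theta_k\gamma_k A_k \le 4L^2\theta_k\gamma_k^2\|x_{k+1}-x\|^2 + \tfrac{\theta_k}{16}\|x_{k+1}-x_k\|^2$, producing exactly the $-4L^2\theta_k\gamma_k^2\|x_{k+1}-x\|^2$ term appearing on the left of \eqref{prop_general_FastTD_result}. The leftover $\theta_t\gamma_t\lambda_t\langle \Delta F_t, x_{t+1}-x_t\rangle$ is bounded by Lipschitz continuity and Young, with hypothesis \eqref{split_8} converting the resulting $L^2\theta_t\gamma_t^2\lambda_t^2\|x_t-x_{t-1}\|^2$ factor into the manageable $\tfrac{\theta_{t-1}}{8}\|x_t-x_{t-1}\|^2 + \tfrac{\theta_t}{8}\|x_{t+1}-x_t\|^2$.

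The noise piece is treated analogously. Splitting $x_{t+1}-x = (x_{t+1}-x_t)+(x_t-x)$ inside both $\langle \delta_t^\tau,\cdot\rangle$ and $\lambda_t\langle \delta_t^\tau-\delta_{t-1}^\tau,\cdot\rangle$, and applying $\theta_t\gamma_t\lambda_t = \theta_{t-1}\gamma_{t-1}$ with an index shift to the ``$x_t-x$'' portions, telescopes the long-range combinations into the target LHS sum $\sum_t \theta_t\gamma_t\lambda_t\langle \delta_t^\tau, x_t-x\rangle$ together with the single boundary term $-\theta_k\gamma_k\langle \delta_k^\tau, x_k-x\rangle$. The short-range residuals $\theta_t\gamma_t\lambda_t\langle\delta_t^\tau-\delta_{t-1}^\tau, x_{t+1}-x_t\rangle$ and $\theta_k\gamma_k\langle \delta_k^\tau, x_{k+1}-x_k\rangle$ are then Young-split into the advertised $\theta_t\gamma_t^2\lambda_t^2\|\delta_t^\tau-\delta_{t-1}^\tau\|_*^2 + \tfrac{\theta_t}{4}\|x_{t+1}-x_t\|^2$ and $4\theta_k\gamma_k^2\|\delta_k^\tau\|_*^2 + \tfrac{\theta_k}{16}\|x_{k+1}-x_k\|^2$, yielding exactly the two noise terms on the right of \eqref{prop_general_FastTD_result}.

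Adding the $\|x_{t+1}-x_t\|^2$ coefficients accumulated from every Young step produces precisely $\tfrac{\theta_t}{2}$ for each $t$, which is then completely absorbed by $\sum_t \theta_t V(x_t,x_{t+1}) \ge \sum_t \tfrac{\theta_t}{2}\|x_{t+1}-x_t\|^2$. I expect the chief obstacle to be this final bookkeeping: the stochastic telescoping must be arranged so that the noise boundary terms reduce to exactly $-\theta_k\gamma_k\langle \delta_k^\tau, x_k-x\rangle$ with no extraneous leftovers, and the Young weights (powered by \eqref{split_8}) must be chosen tightly enough that the aggregated $\|x_{t+1}-x_t\|^2$ coefficient saturates $\tfrac{\theta_t}{2}$ exactly. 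Any slack in the wrong direction would break the absorption and invalidate the stated inequality.
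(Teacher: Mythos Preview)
Your proposal is correct and follows essentially the same route as the paper. Both arguments start from the three-point inequality \eqref{opt_FastTD_step}, split the operator into $F(x_t)+\lambda_t\Delta F_t$ and a noise piece, telescope the long-range terms via \eqref{theta_t}, and absorb the short-range cross terms into $V(x_t,x_{t+1})$ using \eqref{split_8} and the Young splits with exactly the weights you describe; your final $\tfrac{\theta_t}{2}$ bookkeeping matches the paper's decomposition of $\tfrac12\|x_{t+1}-x_t\|^2$ into $\tfrac18+\tfrac18+\tfrac14$ inside its quantity $\tilde Q$.
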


\begin{proof}
	It follows from \eqnok{opt_FastTD_step} after multiplying with $ \theta_t$ that $\forall x \in X_{t+1}$
	\begin{align}
	\theta_t [V(x_t, x) - V(x_{t+1}, x)] & \ge \theta_t\gamma_t \langle  \tilde F(x_t,\xi_t^\tau) + \lambda_t \big( \tilde F(x_t,\xi_t^\tau) - \tilde F(x_{t-1},\xi_{t-1}^\tau) \big) , x_{t+1} - x \rangle +\theta_t V(x_t, x_{t+1})
	\nn\\
	&= \theta_t\gamma_t \langle F(x_{t+1}), x_{t+1} - x \rangle  -
	\theta_t \gamma_t \langle \Delta F_{t+1}, x_{t+1} - x \rangle+ \theta_t \gamma_t \lambda_t \langle \Delta  F_t, x_{t} - x \rangle \nn\\
	& \quad + \theta_t \gamma_t \lambda_t \langle  \Delta F_t, x_{t+1} - x_t \rangle  +\theta_t V(x_{t+1},x_t)+ \Delta_t, \label{3point_FastTD}
	%&&
	%+ \theta_t \Delta_t + \theta_t V(x_{t+1},x_t) + \theta_t V(x_{t+1},x) \leq \theta_t V(x_t,x)
	\end{align}
	in which, 
	\begin{align*}
	\Delta_t &= \theta_t\gamma_t(1+\lambda_t) \langle \delta_t^\tau,x_{t+1}-x \rangle - \theta_t\gamma_t\lambda_t \langle \delta_{t-1}^\tau,x_{t+1}-x \rangle\\
	&= \theta_t\gamma_t\langle \delta_t^\tau,x_{t+1}-x \rangle - \theta_t\gamma_t\lambda_t \langle \delta_{t-1}^\tau,x_{t}-x \rangle \\
	& \quad + \theta_t\gamma_t\lambda_t\langle \delta_t^\tau-\delta_{t-1}^\tau,x_{t+1}-x_t \rangle+ \theta_t\gamma_t\lambda_t\langle \delta_t^\tau,x_{t}-x \rangle.
	\end{align*}
	Summing up \eqref{3point_FastTD}  from $t =  1$ to $k $, invoking \eqref{theta_t} and $x_1 = x_0$,  and assuming also $ \delta_1^\tau = \delta_0^\tau$  we obtain \textcolor{black}{$\forall x \in \bigcap_{t=1}^{k+1} X_t$},
	\begin{align}
	\tsum_{t=1}^k    \theta_t [V(x_t, x) - V(x_{t+1}, x)]   & \geq  \tsum_{t=1}^k \big[ \theta_t \gamma_t \langle  F(x_{t+1}) , x_{t+1} - x \rangle \big] + \tsum_{t=1}^k \theta_t\gamma_t\lambda_t\langle \delta_t^\tau,x_{t}-x \rangle\nn \\
	 & \quad - \theta_k \gamma_k \langle \Delta  F_{k+1} , x_{k+1} - x \rangle+ \theta_k \gamma_k \langle \delta_k^\tau , x_{k+1} - x \rangle   +  \tilde Q, \label{eqn:common_bnd_VI_FastTD}
	\end{align}	
	with
	\begin{align}
	\tilde Q &:= \tsum_{t=1}^k  
	\left[ \theta_t \gamma_t \lambda_t \langle\Delta  F_t, x_{t+1} - x_t \rangle +\theta_t \gamma_t \lambda_t \langle\delta_t^\tau-\delta_{t-1}^\tau, x_{t+1} - x_t \rangle + \theta_t V(x_t, x_{t+1}) \right]. \label{def_tildeQ_FastTD}
	\end{align}
Using  the Lipschitz condition \eqref{Lipschitz} and $x_1 = x_0$, we can lower bound the term $\tilde Q_t$ as follows
\begin{align*}
	\tilde Q
	&\geq   \tsum_{t=1}^k    \left[ -   \theta_t \gamma_t \lambda_t L \|   x_t - x_{t-1} \| 
	\|   x_{t+1} - x_t \|  +   \theta_t V(x_t, x_{t+1}) +   \theta_t \gamma_t \lambda_t \langle\delta_t^\tau - \delta_{t-1}^\tau, x_{t+1} - x_t \rangle\right] \\
	& \geq   \tsum_{t=1}^k  \left[-    \theta_t \gamma_t \lambda_t L \|   x_t - x_{t-1} \| 
	\|   x_{t+1} - x_t \|  +   \tfrac{\theta_t}{8} \|  x_t - x_{t+1} \|^2 +   \tfrac{\theta_{t-1} }{8} \|  x_t - x_{t-1} \|^2  \right]  +       \tfrac{ \theta_k}{8} \|  x_k - x_{k+1} \|^2  \\
	& \quad +   \tsum_{t=1}^k \left[ \theta_t \gamma_t \lambda_t \langle\delta_t^\tau - \delta_{t-1}^\tau, x_{t+1} - x_t \rangle + \tfrac{\theta_{t} }{4} \|  x_t - x_{t+1} \|^2  \right] \\
	&\geq    \tfrac{ \theta_k}{8} \|  x_k - x_{k+1} \|^2 + \tsum_{t=1}^k \left[ \theta_t \gamma_t \lambda_t \langle\delta_t^\tau - \delta_{t-1}^\tau, x_{t+1} - x_t \rangle + \tfrac{\theta_{t} }{4} \|  x_t - x_{t+1} \|^2  \right]
	\\
	&\geq \tfrac{ \theta_k}{8} \|  x_k - x_{k+1} \|^2 -  \tsum_{t=1}^k \left( \theta_t \gamma_t^2 \lambda_t^2 \|\delta_t^\tau - \delta_{t-1} ^\tau\|_*^2 \right) ,
\end{align*} 
where the second inequality follows from \eqnok{strong_convex_V}, the third inequality follows from \eqnok{split_8} and the last one follows from the Young's inequality. Plugging the above bound of $\tilde Q$ in \eqnok{eqn:common_bnd_VI_FastTD}, and by using the notation
 $\Delta V_t(x)  =  V(x_t, x) - V(x_{t+1}, x)$, 
we obtain
\begin{align}
\tsum_{t=1}^k     \theta_t \Delta V_t(x) & \geq   \tsum_{t=1}^k \big[ \theta_t \gamma_t \langle  F(x_{t+1}) , x_{t+1} - x \rangle \big] + \tsum_{t=1}^k \theta_t\gamma_t\lambda_t\langle \delta_t^\tau,x_{t}-x \rangle+ \tfrac{ \theta_k}{8} \|  x_k - x_{k+1} \|^2\nn\\
& \quad - \theta_k \gamma_k \langle \Delta F_{k+1} , x_{k+1} - x \rangle + \theta_k \gamma_k \langle \delta_k^\tau , x_{k+1} - x \rangle  -   \tsum_{t=1}^k \left( \theta_t \gamma_t^2 \lambda_t^2 \|\delta_t^\tau - \delta_{t-1}^\tau \|_*^2 \right) \nn \\
&\quad \ge \tsum_{t=1}^k \big[ \theta_t \gamma_t \langle \tilde F(x_{t+1}) , x_{t+1} - x \rangle \big] + \tsum_{t=1}^k \theta_t\gamma_t\lambda_t\langle \delta_t^\tau,x_{t}-x \rangle+ \theta_k\gamma_k  \langle \delta_k^\tau , x_{k} - x \rangle\nn\\
&\quad - 4 L^2 \theta_k \gamma_k^2  \|  x - x_{k+1} \|^2  -  4\theta_k\gamma_k^2 \|\delta_k^\tau\|_*^2 -  \tsum_{t=1}^k \left( \theta_t \gamma_t^2 \lambda_t^2 \|\delta_t^\tau - \delta_{t-1} ^\tau\|_*^2 \right) , \label{FastTD_bound_general_tmp}
\end{align}
where the second inequality follows from
\begin{align*}
& \quad - \gamma_k \langle \Delta  F_{k+1} , x_{k+1} - x \rangle  + \gamma_k \langle \delta_k^\tau , x_{k+1} - x \rangle 
+\tfrac{1 }{8} \|  x_k - x_{k+1} \|^2 \\
%& =   - \gamma_k \langle \Delta  F_{k+1} , x_{k+1} - x \rangle    +  \gamma_k \langle \delta_k^\tau , x_{k+1} - x_k \rangle+  \gamma_k \langle \delta_k^\tau , x_k - x \rangle
%+ \tfrac{1 }{8} \|  x_k - x_{k+1} \|^2 \\
& \geq 
-  \gamma_k L \|  x_k - x_{k+1} \|  \|  x - x_{k+1} \| - \gamma_k\|\delta_k^\tau\|_*\|  x_k - x_{k+1} \| +
\tfrac{1 }{8} \|  x_k - x_{k+1} \|^2 +  \gamma_k \langle \delta_k^\tau , x_k - x \rangle  \\
& \geq   - 4 L^2 \gamma_k^2  \|  x - x_{k+1} \|^2  -  4\gamma_k^2 \|\delta_k^\tau\|_*^2 +  \gamma_k \langle \delta_k ^\tau, x_k - x \rangle.
\end{align*}
\end{proof}

Now we describe the main convergence properties for stochastic generalized (strongly) monotone VIs,
i.e., when \eqnok{G_monotone1} holds for some $\mu > 0$.

\begin{theorem} \label{the_FastTD_GMVI}
	\textcolor{black}{Let \textit{Assumptions A, B} and \textit{C} hold.} Suppose that \eqnok{G_monotone1} holds for some $\mu > 0$.
	If the parameters in the FTD algorithm satisfy 
	\eqnok{theta_t}, \eqnok{split_8}, as well as, 
	\begin{align}
	\theta_t +2\theta_t\gamma_t\lambda_tC\rho^\tau+8 \theta_t\g_t^2\lambda_t^2C^2\rho^{2\tau}+8 \theta_{t+1}\g_{t+1}^2\lambda_{t+1}^2C^2\rho^{2\tau} &\leq \theta_{t-1} (2 \mu \gamma_{t-1} + 1), ~~t = 2, \ldots, k-1, \nn\\
	\theta_k +2\theta_k\gamma_k\lambda_kC\rho^\tau+2\theta_k\gamma_kC\rho^\tau+8 \theta_k\gamma_k^2\lambda_k^2C^2\rho^{2\tau}+16 \theta_{k}\gamma_{k}^2C^2\rho^{2\tau} &\leq \theta_{k-1} (2 \mu \gamma_{k-1} + 1),
	\label{gamma_strong_mon_FastTD} 
	\\
	16 L^2 \gamma_k^2 &\leq 1, \label{gamma_strong_mon_FastTD_k}
	\end{align}
	then 
	{\color{black}
	\begin{align*}
	\theta_k (2 \mu \gamma_k + \tfrac{1}{2}) \bbe[V(x_{k+1},x^*)]  
	 \leq (\theta_1 +2\theta_1\gamma_1\lambda_1C\rho^\tau+16 \theta_1\g_1^2\lambda_1^2C^2\rho^{2\tau}+8 \theta_{2}\g_2^2\lambda_{2}^2C^2\rho^{2\tau}) V(x_1,x^*) \\
	 \quad  + \tsum_{t=1}^k \theta_t \gamma_t^2 \lambda_t^2  \left(4\sigma^2 +2\varsigma^2\bbe[\|x_t-x^*\|^2] +2\varsigma^2\bbe[\|x_{t-1}-x^*\|^2]\right)
	+ 4 \theta_k  \gamma_k^2 (\sigma^2+\varsigma^2\bbe[\|x_k-x^*\|^2]).
	\end{align*}}
\end{theorem}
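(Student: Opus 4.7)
The plan is to start from the one-step inequality \eqnok{prop_general_FastTD_result} of Proposition~\ref{prop_FastTD} specialized to $x=x^*$, invoke the strong monotonicity \eqnok{G_monotone1} to replace each $\gamma_t\langle F(x_{t+1}),x_{t+1}-x^*\rangle$ by $2\mu\gamma_t V(x_{t+1},x^*)$, take expectations, bound every Markovian noise term using Assumptions C and D through Lemma~\ref{bound_delta_norm} and \eqnok{bound_delta_inner1}, absorb the lone $\|x_{k+1}-x^*\|^2$ residual via the stepsize condition \eqnok{gamma_strong_mon_FastTD_k}, and finally telescope the resulting recursion by means of hypotheses \eqnok{theta_t}, \eqnok{split_8}, and \eqnok{gamma_strong_mon_FastTD}.

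In more detail, conditioning on the natural $\sigma$-field just before $\xi_t^\tau$ is drawn and using \eqnok{bound_delta_inner_lhs} with Cauchy--Schwarz yields $|\bbe[\langle \delta_t^\tau, x_t-x^*\rangle]|\le C\rho^\tau\bbe[\|x_t-x^*\|^2]\le 2C\rho^\tau\bbe[V(x_t,x^*)]$, where the last inequality invokes \eqnok{strong_convex_V}. Combining the Young bound $\|\delta_t^\tau-\delta_{t-1}^\tau\|_*^2\le 2\|\delta_t^\tau\|_*^2+2\|\delta_{t-1}^\tau\|_*^2$ with Lemma~\ref{bound_delta_norm} produces $\bbe[\|\delta_t^\tau-\delta_{t-1}^\tau\|_*^2]\le 4\sigma^2+(4C^2\rho^{2\tau}+2\varsigma^2)(\bbe[\|x_t-x^*\|^2]+\bbe[\|x_{t-1}-x^*\|^2])$, and a single application of Lemma~\ref{bound_delta_norm} handles $\bbe[\|\delta_k^\tau\|_*^2]$. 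I would leave the $\varsigma^2$ pieces in their native $\|\cdot-x^*\|^2$ form to match the theorem statement, while converting the $C^2\rho^{2\tau}$ pieces into $V$-form via $\|x-x^*\|^2\le 2V(x,x^*)$. The boundary Lipschitz term is then controlled by $\|x_{k+1}-x^*\|^2\le 2V(x_{k+1},x^*)$ combined with \eqnok{gamma_strong_mon_FastTD_k}, yielding $4L^2\theta_k\gamma_k^2\|x_{k+1}-x^*\|^2\le \tfrac{\theta_k}{2}V(x_{k+1},x^*)$, so that the net coefficient of $V(x_{k+1},x^*)$ on the left-hand side becomes $\theta_k(2\mu\gamma_k+\tfrac{1}{2})$, which is precisely the target.

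It then remains to telescope. After the shift $s=t+1$, the left-hand side contribution $\sum_{t=1}^k\theta_t(1+2\mu\gamma_t)V(x_{t+1},x^*)$ provides, for each $t\in[2,k]$, the coefficient $\theta_{t-1}(1+2\mu\gamma_{t-1})$ in front of $\bbe[V(x_t,x^*)]$. For $t\in[2,k-1]$, the collected right-hand side coefficient is exactly $\theta_t+2\theta_t\gamma_t\lambda_t C\rho^\tau+8\theta_t\gamma_t^2\lambda_t^2 C^2\rho^{2\tau}+8\theta_{t+1}\gamma_{t+1}^2\lambda_{t+1}^2 C^2\rho^{2\tau}$, whose last two pieces track respectively the $\|x_t-x^*\|^2$-contribution to $\|\delta_t^\tau-\delta_{t-1}^\tau\|_*^2$ and the $\|x_{t-1}-x^*\|^2$-contribution to $\|\delta_{t+1}^\tau-\delta_t^\tau\|_*^2$; this is precisely what hypothesis \eqnok{gamma_strong_mon_FastTD} dominates. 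At $t=k$, the boundary terms $-\theta_k\gamma_k\langle\delta_k^\tau,x_k-x^*\rangle$ and $4\theta_k\gamma_k^2\|\delta_k^\tau\|_*^2$ contribute the additional $2\theta_k\gamma_k C\rho^\tau$ and $16\theta_k\gamma_k^2 C^2\rho^{2\tau}$ pieces, matching the second line of \eqnok{gamma_strong_mon_FastTD}. The residual coefficient of $V(x_1,x^*)$ has no matching left-hand side term and therefore survives as the overall prefactor announced in the statement: applying the uniform Young bound on $\|\delta_1^\tau-\delta_0^\tau\|_*^2$ under the convention $x_0=x_1$ doubles the single-index factor $8C^2\rho^{2\tau}$, producing the advertised $16\theta_1\gamma_1^2\lambda_1^2 C^2\rho^{2\tau}$ coefficient.

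The principal obstacle is this bookkeeping: at each fixed $t$, contributions to the coefficient of $\bbe[V(x_t,x^*)]$ stream in from the direct $\theta_t V(x_t,x^*)$-term, from the bias inner product, from two consecutive squared-increment terms, and (only at $t=k$) from the boundary pieces. Once one verifies that \eqnok{gamma_strong_mon_FastTD}--\eqnok{gamma_strong_mon_FastTD_k} are tight enough to absorb every such contribution on the right-hand side, the claimed inequality falls out directly.
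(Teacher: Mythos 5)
Your proposal is correct and follows essentially the same route as the paper's proof: fix $x=x^*$ in Proposition~\ref{prop_FastTD}, bound $\bbe[\|\delta_t^\tau-\delta_{t-1}^\tau\|_*^2]$ by Young's inequality plus Lemma~\ref{bound_delta_norm}, control the bias inner products via \eqnok{bound_delta_inner1}, absorb the $4L^2\theta_k\gamma_k^2\|x_{k+1}-x^*\|^2$ term using \eqnok{strong_convex_V} and \eqnok{gamma_strong_mon_FastTD_k}, and telescope with \eqnok{gamma_strong_mon_FastTD}; your coefficient bookkeeping (including the doubled $16\theta_1\gamma_1^2\lambda_1^2C^2\rho^{2\tau}$ from the $x_0=x_1$ convention and the extra $2\theta_k\gamma_kC\rho^\tau+16\theta_k\gamma_k^2C^2\rho^{2\tau}$ boundary pieces at $t=k$) matches the paper exactly. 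The only quibble is that the conditioning should be on the $\sigma$-field available when $x_t$ is formed (before the whole block $\xi_t^1,\dots,\xi_t^\tau$ is collected), not ``just before $\xi_t^\tau$ is drawn,'' so that the $\tau$-step mixing factor $C\rho^\tau$ in Assumption~D actually applies.
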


\begin{proof}
	Let us fix $x = x^*$ and take expectation on both sides of \eqnok{prop_general_FastTD_result} \textcolor{black}{with respect to the underlying measure, which is completely speficifed by the stochastic kernel of the Markov chain and its initial distribution,}
	\begin{align}
	&\tsum_{t=1}^k   \theta_t \big\{ \gamma_t \bbe[ \langle  F(x_{t+1}) , x_{t+1} - x^* \rangle]+  \gamma_t\lambda_t\bbe[\langle \delta_t^\tau,x_{t}-x^* \rangle] + \bbe[V(x_{t+1},x^*)]\big\} - 4 L^2 \theta_k \gamma_k^2\bbe[ \| x_{k+1} - x^* \|^2]  \nn\\
	&\leq \tsum_{t=1}^k \theta_t \bbe[V(x_t,x^*)] +  \tsum_{t=1}^k \left( \theta_t \gamma_t^2 \lambda_t^2\bbe[ \|\delta_t^\tau - \delta_{t-1}^\tau \|_*^2] \right) + 4\theta_k\gamma_k^2\|\delta_k^\tau\|_*^2-\theta_k\gamma_k\mathbb{E}[\langle\delta_k^\tau,x_k-x^*\rangle].
	\label{bnd_FastTD_temp}
	\end{align}
	Note that 
	\begin{align*}
	\bbe[\|\delta_t^\tau - \delta_{t-1} ^\tau\|_*^2] &\le 2 (\bbe[\|\delta_t^\tau\|_*^2] + \bbe[\|\delta_{t-1}^\tau\|_*^2]) ,
	\end{align*}
	together with   \eqnok{bound_delta_inner1} and \eqnok{bound_delta_norm1}, we obtain
	{\color{black}
	\begin{align}
& \tsum_{t=1}^k \big\{  \theta_t \big( \gamma_t \bbe[ \langle  F(x_{t+1}) , x_{t+1} - x^* \rangle] + \bbe[V(x_{t+1},x^*)]\big)\big\} - 4 L^2 \theta_k \gamma_k^2\bbe[ \| x_{k+1} - x^* \|^2]  \nn\\
&	\leq \tsum_{t=1}^k \theta_t \bbe[V(x_t,x^*)] +  \tsum_{t=1}^k \left( \theta_t \gamma_t^2 \lambda_t^2C^2\rho^{2\tau}(4\bbe[\|x_t-x^*\|^2]+4\bbe[\|x_{t-1}-x^*\|^2])\right)  \nn\\
&\quad+ \tsum_{t=1}^k  [\theta_t \gamma_t^2 \lambda_t^2 (4\sigma^2 +2\varsigma^2\bbe[\|x_t-x^*\|^2]+ 2\varsigma^2\bbe[\|x_{t-1}-x^*\|^2])]
	+ \tsum_{t=1}^k\theta_t\gamma_t\lambda_tC\rho^\tau\bbe[\|x_t-x^*\|^2]\nn\\
	&\quad+ 8\theta_k\gamma_k^2C^2\rho^{2\tau}\bbe[\|x_k-x^*\|^2]+4\theta_k\gamma_k^2\sigma^2+4\theta_k\gamma_k^2\varsigma^2\bbe[\|x_k-x^*\|^2]+\theta_k\gamma_kC\rho^\tau\bbe[\|x_k-x^*\|^2].
	\label{f}
	\end{align}}
	Invoking \eqnok{G_monotone1} and rearranging  terms, we have
	{\color{black}
	\begin{align*}
	&\tsum_{t=1}^k \big\{ \theta_t (2 \mu \gamma_t  + 1) \bbe[V(x_{t+1},x^*)]\big\}  - 4 L^2 \theta_k \gamma_k^2\bbe[ \| x_{k+1} - x^* \|^2]  \nn\\
	& \leq \tsum_{t=2}^{k-1} (\theta_t +2\theta_t\gamma_t\lambda_tC\rho^\tau+8 \theta_t\g_t^2\lambda_t^2C^2\rho^{2\tau}+8 \theta_{t+1}\g_{t+1}^2\lambda_{t+1}^2C^2\rho^{2\tau}) \bbe[V(x_t,x^*)] \nn\\
	& \quad + (\theta_k +2\theta_k\gamma_k\lambda_kC\rho^\tau+2\theta_k\gamma_kC\rho^\tau+8 \theta_k\gamma_k^2\lambda_k^2C^2\rho^{2\tau}+16 \theta_{k}\gamma_{k}^2C^2\rho^{2\tau}) \bbe[V(x_k,x^*)]\nn\\
	& \quad + (\theta_1 +2\theta_1\gamma_1\lambda_1C\rho^\tau+16 \theta_1\g_1^2\lambda_1^2C^2\rho^{2\tau}+8 \theta_{2}\g_2^2\lambda_{2}^2C^2\rho^{2\tau}) V(x_1,x^*) \nn\\
	 & \quad+\tsum_{t=1}^k \theta_t \gamma_t^2 \lambda_t^2  \left(4\sigma^2 +2\varsigma^2\bbe[\|x_t-x^*\|^2] +2\varsigma^2\bbe[\|x_{t-1}-x^*\|^2]\right)
	+ 4 \theta_k  \gamma_k^2 (\sigma^2+\varsigma^2\bbe[\|x_k-x^*\|^2]).
	\end{align*}}
	From  \eqnok{gamma_strong_mon_FastTD} and using \eqnok{strong_convex_V}, we have
	{\color{black}
	\begin{align*}
	&\theta_k (2 \mu \gamma_k + \tfrac{1}{2}) \bbe[V(x_{k+1},x^*)] 
	+ (\tfrac{1}{4} - 4 L^2 \gamma_k^2) \theta_k \bbe[ \| x_{k+1} - x^* \|^2] \\
	& \leq (\theta_1 +2\theta_1\gamma_1\lambda_1C\rho^\tau+16 \theta_1\g_1^2\lambda_1^2C^2\rho^{2\tau}+8 \theta_{2}\g_2^2\lambda_{2}^2C^2\rho^{2\tau}) V(x_1,x^*)\nn\\
	 & \quad+\tsum_{t=1}^k \theta_t \gamma_t^2 \lambda_t^2  \left(4\sigma^2 +2\varsigma^2\bbe[\|x_t-x^*\|^2] +2\varsigma^2\bbe[\|x_{t-1}-x^*\|^2]\right)
	+ 4 \theta_k  \gamma_k^2 (\sigma^2+\varsigma^2\bbe[\|x_k-x^*\|^2]),
	\end{align*}}
	which, in view of \eqnok{gamma_strong_mon_FastTD_k}, clearly implies the result.
\end{proof}

\subsubsection{Convergence result of FTD with bounded feasible region}\label{FTD_bounded}
{\color{black}
We first specify a particular stepsize selection when the feasible region is bounded. In this case we set $X_1=X, \hdots, X_k = X$, and denote $D_X^2 :=\max_{x,y\in X} \|x-y\|^2 <\infty$.}
\begin{corollary} \label{step_size_stoch_strong_mon_k_unknown}   \label{step_size_FastTD_skipping}
	%\textbf{At this point just verify the stepsize policy selection}
	Let $ \underline{\tau} $ be defined in \eqnok{def_tau} and \textcolor{black}{$\mu > 0 $}. If
	$\tau \geq \underline{\tau}$
	and
	$$
	t_0 =\tfrac{8L}{ \mu}, ~~~ \gamma_t= \tfrac{2}{\mu ( t+t_0 -1) },~~~ \theta_t  = (t + t_0)(t+t_0+1),~~\mbox{and}~~\lambda_t = \tfrac{\theta_{t-1}\gamma_{t-1}}{\theta_t\gamma_t},
	$$
	%~~~ \rho^\tau \le \frac{\mu}{4C},
	then
	\begin{align}\label{corr_FTD1}
	\bbe[V(x_{k+1},x^*)] 
	\leq \tfrac{2( t_0 + 1)(t_0+2) V(x_1,x^*)}{ (k + t_0)(k+t_0+1)}  +  \tfrac{40(k+1) (\sigma^2+\varsigma^2 D_X^2)}{\mu^2  (k + t_0)(k+t_0+1)}.
	\end{align}
\end{corollary}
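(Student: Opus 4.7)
The plan is to verify the four hypotheses \eqnok{theta_t}, \eqnok{split_8}, \eqnok{gamma_strong_mon_FastTD}, \eqnok{gamma_strong_mon_FastTD_k} of Theorem~\ref{the_FastTD_GMVI} for the proposed stepsize and then specialize its conclusion using the boundedness $\|x_t-x^*\|^2 \le D_X^2$ to read off \eqnok{corr_FTD1}. Identity \eqnok{theta_t} holds by construction. Since the definition of $\lambda_t$ yields $\gamma_t\lambda_t = \theta_{t-1}\gamma_{t-1}/\theta_t$, condition \eqnok{split_8} reduces to $16L^2\gamma_{t-1}^2 \le \theta_t/\theta_{t-1}$ and condition \eqnok{gamma_strong_mon_FastTD_k} to $16L^2\gamma_k^2 \le 1$. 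With $t_0 = 8L/\mu$ one has $16L^2\gamma_t^2 = 64L^2/[\mu^2(t+t_0-1)^2] \le 1$ for every $t\ge 1$, while $\theta_t/\theta_{t-1} = (t+t_0+1)/(t+t_0-1) \ge 1$, so both requirements are met.

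The main algebraic check is \eqnok{gamma_strong_mon_FastTD}. Direct substitution yields
$$
\theta_{t-1}(2\mu\gamma_{t-1}+1) - \theta_t = (t+t_0-1)(t+t_0)\cdot\tfrac{t+t_0+2}{t+t_0-2} - (t+t_0)(t+t_0+1) = \tfrac{2(t+t_0-1)(t+t_0)}{t+t_0-2},
$$
a margin of order $\theta_{t-1}\mu\gamma_{t-1}$. The lower bound $\tau\ge \underline{\tau}$ in \eqnok{def_tau} enforces $C\rho^\tau \le \mu/9$, so each of the correction terms $2\theta_t\gamma_t\lambda_t C\rho^\tau$, $8\theta_t\gamma_t^2\lambda_t^2 C^2\rho^{2\tau}$, and $8\theta_{t+1}\gamma_{t+1}^2\lambda_{t+1}^2 C^2\rho^{2\tau}$ is comfortably absorbed by this margin; the boundary case $t=k$ follows the same calculation but with the two additional $O(C\rho^\tau)$ terms appearing in \eqnok{gamma_strong_mon_FastTD}.

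To finish, I use the convention $\lambda_1 = 0$ dictated by $x_0 = x_1$, which gives $\theta_t\gamma_t^2\lambda_t^2 = \theta_{t-1}^2\gamma_{t-1}^2/\theta_t$; for $t\ge 2$ this is at most $4/\mu^2$, so $\sum_{t=1}^k \theta_t\gamma_t^2\lambda_t^2 \le 4k/\mu^2$, and similarly $4\theta_k\gamma_k^2 \le 16/\mu^2$. The prefactor on $V(x_1,x^*)$ in Theorem~\ref{the_FastTD_GMVI} collapses, via the same $\rho^\tau$-smallness argument, to $M_1 \le 2\theta_1 = 2(t_0+1)(t_0+2)$, while $\theta_k(2\mu\gamma_k+\tfrac12) \ge \tfrac12(k+t_0)(k+t_0+1)$. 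Substituting $\|x_t-x^*\|^2 \le D_X^2$ into every noise coefficient and dividing through then produces \eqnok{corr_FTD1} after collecting constants.

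The main obstacle I anticipate is the constant bookkeeping in \eqnok{gamma_strong_mon_FastTD}: matching both the four perturbation terms and the end-of-horizon corrections requires a careful per-index accounting to ensure the clean ``$2$'' in front of $(t_0+1)(t_0+2)$ and the ``$40$'' in the noise term of \eqnok{corr_FTD1} come out as stated rather than as larger numerical constants.
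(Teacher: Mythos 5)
Your proposal follows essentially the same route as the paper's proof: verify \eqnok{theta_t}, \eqnok{split_8}, \eqnok{gamma_strong_mon_FastTD_k} directly from the stepsize definitions, absorb the $C\rho^\tau$ perturbations in \eqnok{gamma_strong_mon_FastTD} into the margin $\theta_{t-1}(2\mu\gamma_{t-1}+1)-\theta_t$, and then bound $\sum_t\theta_t\gamma_t^2\lambda_t^2$ and $\theta_k\gamma_k^2$ before substituting $\|x_t-x^*\|^2\le D_X^2$. The only caveats are numerical: your per-term bound $\theta_t\gamma_t^2\lambda_t^2\le 4/\mu^2$ is slightly too tight (the paper uses $5k/\mu^2$ for the sum, which is what the constant $40$ actually requires), and your displayed margin equals $\theta_{t-1}\mu\gamma_{t-1}=2(t+t_0-1)(t+t_0)/(t+t_0-2)$ rather than the exact value $2(t+t_0)^2/(t+t_0-2)$, but neither slip affects the validity of the argument.
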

\begin{proof}
	Note that \eqnok{theta_t} holds by the definition of $\lambda_t$.
	Observe that
	\begin{align*}
	16 L^2 \gamma_k^2  \leq  1, \quad\quad 16 L^2 \gamma_t^2 \lambda_t^2 \theta_t = 
	16 L^2 \tfrac{\theta_{t-1}^2 \gamma_{t-1}^2}{\theta_t} \le 16 L^2 \gamma_{t-1}^2 \theta_{t-1} \le \theta_{t-1},
	\end{align*}
	and thus both  \eqnok{split_8} and \eqnok{gamma_strong_mon_FastTD_k} hold.
	Also, \eqnok{gamma_strong_mon_FastTD} holds, because
	\begin{align*}
	&\theta_t +2\theta_t\gamma_t\lambda_tC\rho^\tau+8 \theta_t\g_t^2\lambda_t^2C^2\rho^{2\tau}+8 \theta_{t+1}\g_{t+1}^2\lambda_{t+1}^2C^2\rho^{2\tau} - \theta_{t-1} (2 \mu \gamma_{t-1} + 1) \\
	\leq&~ \theta_t - \theta_{t-1}(2\mu\gamma_{t-1}+1-2\gamma_{t-1}C\rho^\tau-16\gamma_{t-1}^2\tfrac{\theta_{t-1}}{\theta_t}C^2\rho^{2\tau})
	\leq~ \theta_t-\theta_{t-1}(\mu\gamma_{t-1}+1)\leq 0,~~~t = 2, \ldots, k-1,\\
	&\theta_k +2\theta_k\gamma_k\lambda_kC\rho^\tau+2\theta_k\gamma_kC\rho^\tau+8 \theta_k\gamma_k^2\lambda_k^2C^2\rho^{2\tau}+16 \theta_{k}\gamma_{k}^2C^2\rho^{2\tau} - \theta_{k-1} (2 \mu \gamma_{k-1} + 1)\\
	\leq&~ \theta_k - \theta_{k-1}(2\mu\gamma_{k-1}+1-2\gamma_{k-1}C\rho^\tau-24\gamma_{k-1}^2\tfrac{\theta_{k-1}}{\theta_k}C^2\rho^{2\tau})
	\leq~ \theta_k-\theta_{k-1}(\mu\gamma_{k-1}+1)\leq~  0.
	\end{align*}
	The result then follows from Theorem~\ref{the_FastTD_GMVI} and the following 
	simple calculations:
	\begin{align*}
	\tsum_{t=1}^k \theta_t\gamma_t^2\lambda_t^2 &= \tsum_{t=1}^k\tfrac{\theta_{t-1}^2\gamma_{t-1}^2}{\theta_t}
	=\tsum_{t=1}^k \tfrac{4(t+t_0-1)^2(t+t_0)}{\mu^2(t+t_0-2)^2(t+t_0+1)}\leq \tfrac{5k}{\mu^2},\\
	\theta_k\gamma_k^2 & = \tfrac{4(k+t_0+1)(k+t_0)}{\mu^2(k+t_0-1)^2}\leq \tfrac{5}{\mu^2}.
	\end{align*}
\end{proof}

In view of Corollary \ref{step_size_FastTD_skipping} the number of iterations performed by the FTD algorithm to find a solution $ \bar{x} \in X$ s.t. $ \mathbb{E}[V(\bar{x}, x^*)] \leq \epsilon $ is bounded by 
{\color{black}$$
\mathcal{O}\{ \max( \tfrac{\underline{\tau} L}{\mu}
\sqrt{\tfrac{V(x_1, x^*)}{\epsilon} }, \tfrac{\underline{\tau} (\sigma^2+\varsigma^2D_X^2)}{\mu^2 \epsilon}  ) \}
$$}

Assuming that the total number of iterations $k$ is given in advance, we can select a stepsize policy that  improves on the convergence rate achieved in Corollary \ref{step_size_stoch_strong_mon_k_unknown}.

\begin{corollary}\label{step_size_stoch_strong_mon_k_known}
	%\textbf{At this point just verify the stepsize policy selection}
	Assume that \eqnok{G_monotone1} holds for some $\mu > 0$, let $ \underline{\tau} $ be defined in \eqnok{def_tau}. If
	$\tau \geq \underline{\tau}$
	and
	$$
	\gamma_t =\gamma= \min\{\tfrac{1}{4L},\tfrac{q\log k}{\mu k}\},~~~ \theta_t  = (\tfrac{4\mu \gamma}{3}+1)^{t}, ~~~ \lambda_t=\tfrac{3}{4\mu\gamma+3},
	$$
	%~~~ \rho^\tau \le\tfrac{\mu}{8C},
	in which
	$
	q = \tfrac{3}{2}\big(1+\tfrac{\log(\mu^2V(x_1,x^*)/(\sigma^2+\varsigma^2 D_X^2))}{\log k}\big),
	$
	then
	\begin{align*}
	\bbe[V(x_{k+1},x^*)] 
	\leq 2\big(1+\tfrac{\mu}{3L} \big)^{-k}V(x_1,x^*) + \tfrac{(2+9\log k+9\log\tfrac{\mu^2V(x_1,x^*)}{\sigma^2+\varsigma^2 D_X^2})(\sigma^2+\varsigma^2 D_X^2)}{\mu^2 k} +\tfrac{4q^2\log k^2(\sigma^2+\varsigma^2 D_X^2)}{\mu^2k^2}.
	\end{align*}
\end{corollary}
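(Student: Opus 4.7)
The plan is to invoke Theorem~\ref{the_FastTD_GMVI} in much the same spirit as Corollary~\ref{step_size_FastTD_skipping}, but with the constant step-size replacing the diminishing one. First I will check the relations \eqref{theta_t}, \eqref{split_8}, \eqref{gamma_strong_mon_FastTD}, and \eqref{gamma_strong_mon_FastTD_k}. For \eqref{theta_t}, one verifies directly that $\lambda_t = \tfrac{3}{4\mu\gamma+3} = \tfrac{\theta_{t-1}}{\theta_t}$, so $\theta_{t-1}\gamma_{t-1} = \theta_t\gamma_t \lambda_t$. For \eqref{gamma_strong_mon_FastTD_k} the choice $\gamma \leq \tfrac{1}{4L}$ gives $16 L^2 \gamma^2 \leq 1$ immediately; the same bound, combined with $\lambda_t \leq 1$, yields $16 L^2 \gamma_t^2 \lambda_t^2 \theta_t \le \theta_{t-1}$, which is \eqref{split_8}.

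The main algebraic verification is \eqref{gamma_strong_mon_FastTD}. Since $\tau \ge \underline{\tau}$, the definition \eqref{def_tau} implies $C\rho^\tau \le \mu/9$. Using $\theta_t/\theta_{t-1} = 1+\tfrac{4\mu\gamma}{3}$ and $\theta_{t+1}\gamma_{t+1}^2\lambda_{t+1}^2 = \theta_t \gamma_t^2$, one reduces \eqref{gamma_strong_mon_FastTD} (for both the generic $t$ and the terminal $t=k-1$ case) to an inequality of the form
\[
(1+\tfrac{4\mu\gamma}{3}) + O(\mu\gamma) \le 1 + 2\mu\gamma,
\]
where the $O(\mu\gamma)$ terms come from the contributions proportional to $C\rho^\tau$ and $C^2\rho^{2\tau}$ and are controlled by $\tfrac{\mu}{9}$. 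This leaves a net margin of roughly $\tfrac{\mu\gamma}{3}$, which absorbs the remaining lower order terms.

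Having verified all conditions, Theorem~\ref{the_FastTD_GMVI} yields a bound of the form
\[
\theta_k\bigl(2\mu\gamma + \tfrac{1}{2}\bigr)\bbe[V(x_{k+1},x^*)] \le M_1 \theta_1 V(x_1,x^*) + M_2 \tsum_{t=1}^k \theta_t \gamma^2 \lambda^2 (\sigma^2 + \varsigma^2 D_X^2) + M_3 \theta_k \gamma^2 (\sigma^2+\varsigma^2 D_X^2),
\]
where the bounded-diameter property $\|x_t-x^*\|^2 \le D_X^2$ has been used to replace the state-dependent noise moments. Dividing through by $\theta_k(2\mu\gamma+\tfrac{1}{2}) \ge \tfrac{1}{2}\theta_k$ and noting that $\theta_k^{-1} = (1+\tfrac{4\mu\gamma}{3})^{-k}$, the leading term becomes $2(1+\tfrac{4\mu\gamma}{3})^{-k} V(x_1,x^*)$. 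Splitting according to which branch of $\min\{\tfrac{1}{4L}, \tfrac{q\log k}{\mu k}\}$ is active gives $2(1+\tfrac{\mu}{3L})^{-k}V(x_1,x^*)$ when $\gamma = \tfrac{1}{4L}$; otherwise, using $\log(1+x)\ge x/2$ for $x\in[0,1]$, one has $(1+\tfrac{4\mu\gamma}{3})^{-k} \le k^{-2q/3}$, and the choice of $q$ turns $V(x_1,x^*) k^{-2q/3}$ into $\tfrac{\sigma^2+\varsigma^2 D_X^2}{\mu^2 k}$.

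The noise terms are handled by a geometric sum: $\sum_{t=1}^k \theta_t \gamma^2 \lambda^2 \le \tfrac{\gamma^2 \lambda^2 \theta_k}{4\mu\gamma/3} \le \tfrac{3\gamma \theta_k}{4\mu}$, which after dividing by $\theta_k/2$ and using $\gamma \le \tfrac{q\log k}{\mu k}$ produces the $\tfrac{O(q\log k)(\sigma^2+\varsigma^2 D_X^2)}{\mu^2 k}$ contribution, matching the $(2+9\log k+9\log(\cdot))$ coefficient via $6q = 9$. The residual term $4\theta_k\gamma^2(\sigma^2+\varsigma^2 D_X^2)/\theta_k \lesssim \gamma^2 \le \tfrac{q^2(\log k)^2}{\mu^2 k^2}$ accounts for the last summand. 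The main obstacle I anticipate is keeping all the $C\rho^\tau$ and $C^2\rho^{2\tau}$ contributions in \eqref{gamma_strong_mon_FastTD} aligned with the margin $\tfrac{\mu\gamma}{3}$ left by the $\theta_t/\theta_{t-1}$ ratio; everything else reduces to routine geometric-sum manipulations.
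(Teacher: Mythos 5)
Your proposal is correct and follows essentially the same route as the paper's proof: verify \eqref{theta_t}, \eqref{split_8}, \eqref{gamma_strong_mon_FastTD}, and \eqref{gamma_strong_mon_FastTD_k} using $C\rho^\tau\le\mu/9$ and $16L^2\gamma^2\le 1$, then apply Theorem~\ref{the_FastTD_GMVI} with $\|x_t-x^*\|^2\le D_X^2$, bound the noise term by the geometric sum $\tsum_t\theta_t\gamma^2\lambda^2\le\tfrac{3\gamma\theta_k}{4\mu}$, and split on the two branches of the $\min$ with the choice of $q$ converting $V(x_1,x^*)k^{-2q/3}$ into $\tfrac{\sigma^2+\varsigma^2D_X^2}{\mu^2k}$. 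The only deviations are at the level of unstated constants (e.g., the margin in \eqref{gamma_strong_mon_FastTD} is $\tfrac{2\mu\gamma}{3}$ rather than $\tfrac{\mu\gamma}{3}$), which do not affect the argument.
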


\begin{proof}
	Note that \eqnok{theta_t} holds by the definition of $\lambda_t$.
	Observe that
	\begin{align*}
	16 L^2 \gamma_k^2  \leq  1, \quad \quad 16 L^2 \gamma_t^2 \lambda_t^2 \theta_t = 
	16 L^2 \tfrac{\theta_{t-1}^2 \gamma_{t-1}^2}{\theta_t} \le 16 L^2 \gamma_{t-1}^2 \theta_{t-1} \le \theta_{t-1},
	\end{align*}
	and thus both  \eqnok{split_8} and \eqnok{gamma_strong_mon_FastTD_k} hold.
	Also, \eqnok{gamma_strong_mon_FastTD} holds, because
	\begin{align*}
	&\theta_t +2\theta_t\gamma_t\lambda_tC\rho^\tau+8 \theta_t\g_t^2\lambda_t^2C^2\rho^{2\tau}+8 \theta_{t+1}\g_{t+1}^2\lambda_{t+1}^2C^2\rho^{2\tau} - \theta_{t-1} (2 \mu \gamma_{t-1} + 1) \\
	&\leq \theta_{t-1}\big[ (\tfrac{4}{3}\mu\gamma+1)(\tfrac{1}{2}\mu\gamma+1)-(1+2\mu\gamma)\big]\leq 0,~~~t = 2, \ldots, k-1,\\
	&\theta_k +2\theta_k\gamma_k\lambda_kC\rho^\tau+2\theta_k\gamma_kC\rho^\tau+8 \theta_k\gamma_k^2\lambda_k^2C^2\rho^{2\tau}+16 \theta_{k}\gamma_{k}^2C^2\rho^{2\tau} - \theta_{k-1} (2 \mu \gamma_{k-1} + 1)\\
	&\leq \theta_{k-1}\big[ (\tfrac{4}{3}\mu\gamma+1)(\tfrac{1}{2}\mu\gamma+1)-(1+2\mu\gamma)\big]\leq 0.
	\end{align*}
	The result then follows from Theorem~\ref{the_FastTD_GMVI} and the following 
	calculations. Set $\Theta_1 = \theta_1 +2\theta_1\gamma_1\lambda_1C\rho^\tau+16 \theta_1\g_1^2\lambda_1^2C^2\rho^{2\tau}+8 \theta_{2}\g_2^2\lambda_{2}^2C^2\rho^{2\tau}$, and note that
	\begin{align*}
	\tfrac{\theta_k^{-1}\Theta_1}{2\mu \gamma + \tfrac{1}{2}}V(x_1,x^*)&\leq 2(\tfrac{4}{3}\mu\gamma+1)^{-k}V(x_1,x^*) \leq 2(1+\tfrac{\mu}{3L})^{-k}V(x_1,x^*) + \tfrac{2(\sigma^2+\varsigma^2 D_X^2)}{\mu^2k},\\
	%\tsum_{t=1}^{k-1} \theta_t &= \tfrac{k^3}{3}+ t_0 k^2+(t_0^2 - \tfrac{1}{3}) k - t_0^2 - t_0 \ge \tfrac{k^3}{3} + t_0^2, \forall k \ge 2, \\
	\tfrac{\theta_k^{-1}}{2\mu\gamma+\tfrac{1}{2}}\tsum_{t=1}^{k} \theta_t \gamma_t^2 \lambda_t^2 &=\tfrac{\theta_k^{-1}}{2\mu\gamma+\tfrac{1}{2}} 	\tsum_{t=1}^{k} \tfrac{\theta_{t-1}^2 \gamma_{t-1}^2}{\theta_t} 
	\leq \tfrac{3\gamma }{2\mu} \leq \tfrac{3q\log k}{2\mu^2k}.
	\end{align*}
\end{proof}

In view of Corollary \ref{step_size_stoch_strong_mon_k_known} the number of iterations performed by the FTD algorithm to find a solution $ \bar{x} \in X$ s.t. $ \mathbb{E}[V(\bar{x}, x^*)] \leq \epsilon $ is bounded by 
$$
\mathcal{O}\{ \max( \tfrac{\underline{\tau} L}{\mu}
\log{\tfrac{V(x_1, x^*)}{\epsilon}} , \tfrac{\underline{\tau} (\sigma^2+\varsigma^2 D_X^2)}{\mu^2 \epsilon} \log{\tfrac{1}{\epsilon}} )\} 
$$

Similarly as in Corollary \ref{restart_stepsize} of the CTD algorithm we can benefit from an index resetting stepsize policy in order to achieve the optimal rate. 

\begin{corollary}\label{restart_stepsize_FastTD}
	Assume $\mu > 0$, let $ \underline{\tau} $ be defined in \eqnok{def_tau}. Set
	$\tau \geq \underline{\tau}$,
	$t_0=\tfrac{8L}{\mu}$, 
	%If $\tau$ satisfies \eqnok{def_tau} and
	and let
	$$k_s = \max\{(2\sqrt{2}-1)t_0+4, ~\tfrac{5\cdot2^{s+4}(\sigma^2+\varsigma^2 D_X^2)}{\mu^2 V(x_1,x^*)}\} ,~s\in \mathbb{Z}^+, ~~ K_0=0, ~~\mbox{and}~~K_s=\tsum_{s'=1}^{s}k_{s'}.
	$$
	For $t=1,2,...,$ introduce the epoch index $\tilde s$ and local iteration index $\tilde t$ such that
	$$\tilde s = \argmax_{\{s\in \mathbb{Z}^+\}}\mathbbm{1}_{\{K_{s-1}<t\le K_s\}}~~\mbox{and}~~
	\tilde t:=t-K_{\tilde s-1}.$$
	For the stepsize policy
	$$\gamma_t = \tfrac{2}{\mu(t_0+\tilde t-1)},~~~\theta_t = (\tilde t+t_0+1)(\tilde t+t_0) ,~~\mbox{and}~~\lambda_t = \begin{cases}
	& 0, ~~~~~~~~~~~t = 1, \\
	&\tfrac{\theta_{t-1}\gamma_{t-1}}{\theta_{t}\gamma_{t}},~~~   t\ge 2,
	\end{cases}$$
	it holds that
	$
	\bbe[V(x_{K_s+1},x^*)]\leq 2^{-s}V(x_1,x^*)
	$ for any $s \ge 1$.
\end{corollary}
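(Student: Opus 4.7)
The plan is to mirror the proof of Corollary \ref{restart_stepsize}: within each epoch apply the per-epoch bound \eqref{corr_FTD1} from Corollary \ref{step_size_FastTD_skipping}, and then show that the prescribed epoch length $k_s$ contracts the expected Bregman distance by a factor of $1/2$ per epoch. The key structural observation is that resetting the local index $\tilde t$ to $1$ at the start of each epoch (equivalently, setting $\lambda_t=0$ there) makes the first iteration of each epoch coincide with a CTD step, i.e., it drops the extrapolation term $\tilde F(x_{t-1},\xi_{t-1}^\tau)$. Up to notation this is exactly the initialization $x_0=x_1$ that was used to derive Proposition~\ref{prop_FastTD} and hence Corollary~\ref{step_size_FastTD_skipping}. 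Consequently, viewing epoch $s$ as an independent run of FTD starting from $x_{K_{s-1}+1}$ with $k_s$ iterations and the diminishing-stepsize policy of Corollary~\ref{step_size_FastTD_skipping}, the bound \eqref{corr_FTD1} gives
\begin{align*}
\bbe[V(x_{K_s+1},x^*)] \leq \tfrac{2(t_0+1)(t_0+2)\,\bbe[V(x_{K_{s-1}+1},x^*)]}{(k_s+t_0)(k_s+t_0+1)} + \tfrac{40(k_s+1)(\sigma^2+\varsigma^2 D_X^2)}{\mu^2(k_s+t_0)(k_s+t_0+1)}.
\end{align*}

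Next I would verify that the two terms on the right hand side are controlled by the choice of $k_s$. The lower bound $k_s \geq (2\sqrt{2}-1)t_0+4$ forces $(k_s+t_0)(k_s+t_0+1)\geq (2\sqrt{2}t_0+4)(2\sqrt{2}t_0+5)\geq 8(t_0+1)(t_0+2)$, so the first coefficient is at most $1/4$. The lower bound $k_s \geq 5\cdot 2^{s+4}(\sigma^2+\varsigma^2 D_X^2)/(\mu^2 V(x_1,x^*))$, combined with the trivial inequality $(k_s+t_0)(k_s+t_0+1)\geq (k_s+1)\cdot k_s$, ensures the second term is at most $2^{-(s+1)}V(x_1,x^*)$.

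The conclusion follows by induction on $s$. For the base case $s=1$, the recursion yields $\bbe[V(x_{K_1+1},x^*)]\leq \tfrac{1}{4}V(x_1,x^*)+\tfrac{1}{4}V(x_1,x^*)\leq \tfrac{1}{2}V(x_1,x^*)$. Assuming $\bbe[V(x_{K_{s-1}+1},x^*)]\leq 2^{-(s-1)}V(x_1,x^*)$ for the inductive step, the same recursion gives
\begin{align*}
\bbe[V(x_{K_s+1},x^*)] \leq \tfrac{1}{4}\cdot 2^{-(s-1)}V(x_1,x^*) + 2^{-(s+1)}V(x_1,x^*) = 2^{-s}V(x_1,x^*),
\end{align*}
completing the induction.

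The main obstacle is justifying that Corollary \ref{step_size_FastTD_skipping} applies to each epoch as a standalone run of FTD; without the $\lambda_t=0$ reset at the start of each epoch, the extrapolation would couple consecutive epochs through the stale stochastic operator value $\tilde F(x_{K_{s-1}},\xi_{K_{s-1}}^\tau)$, and the per-epoch analysis underlying \eqref{corr_FTD1} (which relies on $\delta_0^\tau=\delta_1^\tau$ in the proof of Proposition~\ref{prop_FastTD}) would no longer be available. Once this decoupling is in place, the remaining work is the routine arithmetic above.
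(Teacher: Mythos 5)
Your proof is correct and follows essentially the same route as the paper's: apply the per-epoch bound \eqref{corr_FTD1} of Corollary~\ref{step_size_FastTD_skipping} to each epoch viewed as a fresh FTD run, use the lower bound $k_s\geq(2\sqrt{2}-1)t_0+4$ to make the contraction coefficient at most $1/4$ and the lower bound $k_s\geq 5\cdot 2^{s+4}(\sigma^2+\varsigma^2D_X^2)/(\mu^2V(x_1,x^*))$ to bound the noise term by $2^{-(s+1)}V(x_1,x^*)$, then induct. Your explicit remark that the $\lambda_t=0$ reset at the start of each epoch is what decouples epochs and legitimizes the per-epoch application of \eqref{corr_FTD1} is a correct reading of the intended index-resetting policy and fills in a point the paper leaves implicit.
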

\begin{proof}
	First we note  that in each epoch we use the stepsize policy of Corollary \ref{step_size_stoch_strong_mon_k_unknown}. This enables us to infer that, for $s=1,2,\ldots,$
	\begin{align} \label{convergence_epoch_FastTD}
	\bbe[V(x_{K_{s}+1},x^*)] 
	\leq \tfrac{2( t_0 + 1)  (t_0+2) \bbe[V(x_{K_{s-1}+1},x^*)]}{ (k_s + t_0 + 1)  (k_s + t_0)}  +  \tfrac{40(k_s+1) (\sigma^2+\varsigma^2 D_X^2)}{\mu^2  (k_s + t_0 + 1)  (k_s + t_0)}.
	\end{align}
	As such by taking the specification  of $k_s$ into account, we obtain
	\begin{align*}
	\bbe[V(x_{K_{1}+1},x^*)] 
	\leq &~\tfrac{2( t_0 + 1)  (t_0+2) V(x_{1},x^*)}{ (k_1 + t_0 + 1)  (k_1 + t_0)}  +  \tfrac{40(k_1+1) (\sigma^2+\varsigma^2 D_X^2)}{\mu^2  (k_1 + t_0 + 1)  (k_1 + t_0)}\\
	\leq &~ \tfrac{2( t_0 + 1)  (t_0+2) V(x_{1},x^*)}{ (2\sqrt{2}t_0 + 5)  (2\sqrt{2}t_0+4)} +  \tfrac{40 (\sigma^2+\varsigma^2 D_X^2)}{\mu^2  (k_1 + t_0 + 1) }\leq  \tfrac{V(x_1,x^*)}{2}.
	\end{align*}
	The desired convergence result follows  by recursively using 	
	\eqref{convergence_epoch}.
\end{proof}

{\color{black}
	In view of corollary \ref{restart_stepsize_FastTD},
	the number of epochs performed by the FTD method to find a solution $\bar x \in X$ s.t. $\mathbb{E}[V(\bar x,x^*)] \le \epsilon$ is bounded by $\log_2 (V(x_1, x^*)/\epsilon)$. 
	Then together with the length of each epoch, the total complexity is bounded by
	$${\cal O}\{ \max( \tfrac{\underline{\tau} L }{\mu} \log\tfrac{V(x_1,x^*)}{\epsilon},  \tfrac{\underline{\tau} (\sigma^2+\varsigma^2 D_X^2) }{\mu^2 \epsilon} ) \}.$$
	The above bound improves the one in \eqnok{bound_CTD} in terms of the dependence on $L/\mu$ for the first term.
}

\subsubsection{Convergence result of FTD with unbounded feasible region}

{\color{black}
When the feasible region is unbounded, the stepsize policies in \ref{FTD_bounded} cannot be applied directly. We have two  methods to resolve this issue. One is to apply a projection step, assuming we can find an upper bound to the size of the optimal solution, i.e.  $\|x^*\|\leq G$, and the other is to apply mini-batch. We will elaborate on both methods.

\begin{comment}
Unlike the other algorithms which require projection step for each single iteration,
\end{comment}

\begin{corollary} \label{FTD_boundness1_1}
	Let $\{x_t\}$ be generated by Algorithm \ref{alg:FastTD_skipping} with $X_t=\{x|\|x\|\leq G\}$ for $t\leq \lceil t_0^2 \rceil$ and $X_t=X$ for $t>\lceil t_0^2 \rceil$. Let $ \underline{\tau} $ be defined in \eqnok{def_tau}. If
	$\tau \geq \underline{\tau}$
	and
	$$
	t_0 =\max\{\tfrac{8L}{\mu},\tfrac{11 \varsigma}{\mu}\}, ~~~ \gamma_t= \tfrac{2}{\mu ( t+t_0 -1) },~~~ \theta_t  = (t + t_0)(t+t_0+1),~~\mbox{and}~~\lambda_t = \tfrac{\theta_{t-1}\gamma_{t-1}}{\theta_t\gamma_t},
	$$
	then
	$$
	\bbe[\|x_t-x^*\|^2]\leq R^2:=4V(x_1,x^*)+4G^2+\tfrac{2}{\varsigma^2}\sigma^2, ~\forall t\in \mathbb{Z}_+,
	$$
	and 
	\begin{align}\label{corr_FTD1_1}
	\bbe[V(x_{k+1},x^*)] 
	\leq \tfrac{2( t_0 + 1)(t_0+2) V(x_1,x^*)}{ (k + t_0)(k+t_0+1)}  +  \tfrac{40(k+1) (\sigma^2+\varsigma^2R^2)}{\mu^2  (k + t_0)(k+t_0+1)}.
	\end{align}
	%where $R^2:=2G^2+\tfrac{1}{2\varsigma^2}\sigma^2$.
\end{corollary}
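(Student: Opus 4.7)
The plan is a two-step argument: first establish the uniform bound $\mathbb{E}[\|x_t-x^*\|^2]\le R^2$ for every $t$ by induction on $t$, and then insert this bound into Theorem~\ref{the_FastTD_GMVI} to recover the convergence estimate \eqnok{corr_FTD1_1}.

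For the base case and the entire initial phase $t\le \lceil t_0^2\rceil$, projection onto the ball $\{x:\|x\|\le G\}$ combined with the hypothesis $\|x^*\|\le G$ and the triangle inequality yields $\|x_t-x^*\|^2\le 4G^2\le R^2$. The choice of $X_t$ moreover guarantees $x^*\in \bigcap_t X_t$, which is required so that Proposition~\ref{prop_FastTD} can be invoked at $x=x^*$.

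For the inductive step with $t>\lceil t_0^2\rceil$, I would assume $\mathbb{E}[\|x_s-x^*\|^2]\le R^2$ for every $s\le t$. The stepsize conditions \eqnok{theta_t}, \eqnok{split_8}, \eqnok{gamma_strong_mon_FastTD}, and \eqnok{gamma_strong_mon_FastTD_k} are verified verbatim as in Corollary~\ref{step_size_FastTD_skipping} since they depend only on $t_0\ge 8L/\mu$. Theorem~\ref{the_FastTD_GMVI} applied up to index $t$ produces a bound on $\mathbb{E}[V(x_{t+1},x^*)]$ in which each occurrence of $\varsigma^2\mathbb{E}[\|x_s-x^*\|^2]$ can be replaced by $\varsigma^2 R^2$ via the inductive hypothesis; the telescoping sums from the proof of Corollary~\ref{step_size_FastTD_skipping} then yield \eqnok{corr_FTD1_1} with $R^2$ in place of $D_X^2$ and with iteration index $t$. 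Using $\|x_{t+1}-x^*\|^2\le 2V(x_{t+1},x^*)$ and the coarse lower bound $(t+t_0)(t+t_0+1)\ge t_0^4$ coming from $t>t_0^2$, the inductive claim reduces to an arithmetic inequality of the form
\[
\frac{c_1 V(x_1,x^*)}{t_0^2}+\frac{c_2(\sigma^2+\varsigma^2 R^2)}{\mu^2 t_0^2}\ \le\ R^2
\]
with explicit constants $c_1,c_2$. The defining form $R^2=4V(x_1,x^*)+4G^2+2\sigma^2/\varsigma^2$ absorbs the $V(x_1,x^*)$ and $\sigma^2$ contributions, while $\mu^2 t_0^2\ge 121\varsigma^2$, which follows from $t_0\ge 11\varsigma/\mu$, absorbs the $\varsigma^2 R^2$ contribution, closing the induction.

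Once uniform boundedness is established, substituting $\mathbb{E}[\|x_s-x^*\|^2]\le R^2$ into the conclusion of Theorem~\ref{the_FastTD_GMVI} and recomputing as in Corollary~\ref{step_size_FastTD_skipping} yields \eqnok{corr_FTD1_1}. The main obstacle is the tight numerical calibration in the inductive step: the constant $11$ in $t_0\ge 11\varsigma/\mu$, the duration $\lceil t_0^2\rceil$ of the projection phase, and the additive $2\sigma^2/\varsigma^2$ inside $R^2$ are all interdependent, and the handoff from the projected phase to the unprojected phase only preserves boundedness because of this calibration.
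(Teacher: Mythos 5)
Your proposal is correct and follows essentially the same route as the paper: projection onto the ball of radius $G$ bounds the iterates in the initial phase $t\le\lceil t_0^2\rceil$, and then an induction feeds the bound $\mathbb{E}[\|x_s-x^*\|^2]\le R^2$ back into the convergence estimate of Corollary~\ref{step_size_FastTD_skipping} (with $R^2$ replacing $D_X^2$), the step being closed by exactly the calibration you identify, $\mu^2 t_0^2\ge 121\varsigma^2$ together with the factor $40(\hat k+1)/[(\hat k+t_0)(\hat k+t_0+1)]\le 40/t_0^2$ for $\hat k>\lceil t_0^2\rceil$. The only cosmetic difference is that the paper handles the handoff iteration $t=\lceil t_0^2\rceil$ as a separate displayed computation using $4\varsigma^2G^2$ before starting the induction, whereas you fold it into the general step via $4G^2\le R^2$; both work.
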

\begin{proof}
	In view of the projection step it follows that  $\bbe[\|x_t-x^*\|^2]\leq 4 G^2 \leq R^2,$  $ \forall t \leq \lceil t_0^2 \rceil$. Similar as \eqnok{corr_FTD1}, for $t=\lceil t_0^2 \rceil$ we have
	\begin{align*}
	\bbe[V(x_{t+1},x^*)] 
	\leq \tfrac{2(t_0+1)(t_0+2)V(x_{1},x^*)}{ ( t_0^2 + t_0)(t_0^2+t_0+1)} +  \tfrac{40 (t_0^2+1)  (\sigma^2+4\varsigma^2 G^2)}{\mu^2  (t_0^2 + t_0)(t_0^2+t_0+1)}\leq 2 V(x_1,x^*) + \tfrac{1}{3\varsigma^2} \sigma^2 +\tfrac{4}{3} G^2 \leq \tfrac{R^2}{2}.
	\end{align*}
	Consequently, $\bbe[\|x_{t+1}-x^*\|^2]\leq 2 \bbe[V(x_{t+1},x^*)] \leq R^2$. Now assume that $\bbe[\|x_t-x^*\|^2]\leq R^2, ~\forall t=1,2,...,\hat{k}$, where $\hat{k}>\lceil t_0^2 \rceil$ then
	\begin{align*}
	\bbe[V(x_{\hat k+1},x^*)] 
	\leq \tfrac{2(t_0+1)(t_0+2)V(x_{1},x^*)}{ (\hat k + t_0)(\hat k+t_0+1)} +  \tfrac{40 (\hat k+1) (\sigma^2+\varsigma^2 R^2)}{\mu^2  (\hat k+ t_0)(\hat k+t_0+1)}\leq \tfrac{2}{3} V(x_1,x^*) + \tfrac{1}{3\varsigma^2} \sigma^2 +\tfrac{1}{3} R^2 \leq\tfrac{R^2}{2}.
	\end{align*}
	By inductive arguement, $\bbe[\|x_t-x^*\|^2]\leq R^2,~ \forall t\in \mathbb{Z}_+$. Together with \eqref{corr_FTD1}, we obtain the desired result.
\end{proof} 

In view of Corollary \ref{FTD_boundness1_1} 
the number of samples required by the FTD method to find a solution $\bar x \in X$ s.t. $\mathbb{E}[V(\bar x,x^*)] \le \epsilon$ is bounded by 
$$
	\mathcal{O}\{ \max( \tfrac{\underline{\tau} (L+\varsigma) }{\mu}
	\sqrt{\tfrac{V(x_1, x^*)}{\epsilon} }, \tfrac{\underline{\tau} \left(\sigma^2+\varsigma^2R^2 \right)}{\mu^2 \epsilon}  ) \}
	$$
Note that  our FTD algorithm only requires projection steps for a specific number of   $ \lceil t_0^2 \rceil$ iterations. 

By using the same stepsize as Corollary \ref{step_size_stoch_strong_mon_k_known} and adding a projection step for each iteration (i.e. $X_t=\{x|\|x\|\leq G\}$), we can bound the number of iterations of finding an $\epsilon$-solution by $$
\mathcal{O}\{ \max( \tfrac{\underline{\tau} L}{\mu}
\log{\tfrac{V(x_1, x^*)}{\epsilon}} , \tfrac{\underline{\tau} (\sigma^2+\varsigma^2 G^2)}{\mu^2 \epsilon} \log{\tfrac{1}{\epsilon}} )\}. 
$$

Similarly, we can modify the stepsize of index-resetting in Corollary \ref{restart_stepsize_FastTD} accordingly and achieve accelerated convergence rate as stated in the following corollary.  The proof follows directly by applying Corollary \ref{FTD_boundness1_1} into the proof of Corollary \ref{restart_stepsize_FastTD}. 

\begin{corollary}\label{restart_stepsize_FastTD_2}
	Let $ \underline{\tau} $ be defined in \eqnok{def_tau}. Set
	$\tau \geq \underline{\tau}$,
	$t_0=\max\{\tfrac{8L}{\mu},\tfrac{11 \varsigma}{\mu}\}$, 
	%If $\tau$ satisfies \eqnok{def_tau} and
	and let
	$$k_s = \max\{(2\sqrt{2}-1)t_0+4, ~\tfrac{5\cdot2^{s+4}(\sigma^2+\varsigma^2 R^2)}{\mu^2 V(x_1,x^*)}\} ,~s\in \mathbb{Z}^+, ~~ K_0=0, ~~\mbox{and}~~K_s=\tsum_{s'=1}^{s}k_{s'}.
	$$
	For $t=1,2,...,$ introduce the epoch index $\tilde s$ and local iteration index $\tilde t$ such that
	$$\tilde s = \argmax_{\{s\in \mathbb{Z}^+\}}\mathbbm{1}_{\{K_{s-1}<t\le K_s\}}~~\mbox{and}~~
	\tilde t:=t-K_{\tilde s-1}.$$
	Taking $X_t=\{x|\|x\|\leq G\}$ if $\tilde t \leq \lceil t_0^2 \rceil$ and otherwise $X_t=X$. For the stepsize policy
	$$\gamma_t = \tfrac{2}{\mu(t_0+\tilde t-1)},~~~\theta_t = (\tilde t+t_0+1)(\tilde t+t_0) ,~~\mbox{and}~~\lambda_t = \begin{cases}
	& 0, ~~~~~~~~~~~t = 1, \\
	&\tfrac{\theta_{t-1}\gamma_{t-1}}{\theta_{t}\gamma_{t}},~~~   t\ge 2,
	\end{cases}$$
	it holds that
	$
	\bbe[V(x_{K_s+1},x^*)]\leq 2^{-s}V(x_1,x^*)
	$ for any $s \ge 1$.
\end{corollary}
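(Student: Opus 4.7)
The plan is to mimic the proof of Corollary~\ref{restart_stepsize_FastTD}, but replace the invocation of Corollary~\ref{step_size_stoch_strong_mon_k_unknown} (the bounded-feasible-region analysis) with Corollary~\ref{FTD_boundness1_1} (the unbounded version with a projection warm-up onto $\{x:\|x\|\le G\}$ for the first $\lceil t_0^2\rceil$ iterations of each epoch). This works because within each epoch the local iteration index $\tilde t$ is reset, the stepsize, averaging weights $\theta_t$, extrapolation coefficients $\lambda_t$, and the definition of $t_0$ all match those of Corollary~\ref{FTD_boundness1_1} exactly, and the prescription for $X_t$ inside the epoch (projection when $\tilde t\le \lceil t_0^2\rceil$, then $X$ afterwards) is precisely the one used there.

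First, I would apply Corollary~\ref{FTD_boundness1_1} locally inside epoch $s$, with initial point $x_{K_{s-1}+1}$ and length $k_s$, to obtain the per-epoch recursion
\begin{align*}
\bbe[V(x_{K_s+1},x^*)]\le \tfrac{2(t_0+1)(t_0+2)\,\bbe[V(x_{K_{s-1}+1},x^*)]}{(k_s+t_0)(k_s+t_0+1)}
+\tfrac{40(k_s+1)(\sigma^2+\varsigma^2 R^2)}{\mu^2(k_s+t_0)(k_s+t_0+1)},
\end{align*}
where $R^2=4V(x_1,x^*)+4G^2+\tfrac{2}{\varsigma^2}\sigma^2$ as defined there. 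The only point that needs verification is that the boundedness estimate $\bbe[\|x_t-x^*\|^2]\le R^2$ from Corollary~\ref{FTD_boundness1_1} continues to hold across epoch boundaries; this is automatic once we check the base case, since $R^2$ is constructed to be self-sustaining (the recursion in that corollary's inductive step bounds the state by $R^2/2$ plus a term involving $V(x_1,x^*)$, not the current iterate).

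Next, I would verify the base case $s=1$. Using the definition $k_1\ge (2\sqrt{2}-1)t_0+4$, the first (deterministic) term contributes at most $V(x_1,x^*)/4$, exactly as in Corollary~\ref{restart_stepsize_FastTD}. Using the definition $k_1\ge \tfrac{5\cdot 2^{5}(\sigma^2+\varsigma^2 R^2)}{\mu^2 V(x_1,x^*)}$, the noise term contributes at most $V(x_1,x^*)/4$ (the factor $5\cdot 2^{s+4}$ is tuned precisely so that $\tfrac{40(k_s+1)}{\mu^2 (k_s+t_0)(k_s+t_0+1)}(\sigma^2+\varsigma^2 R^2)\le 2^{-(s+1)}V(x_1,x^*)$). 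Together this gives $\bbe[V(x_{K_1+1},x^*)]\le V(x_1,x^*)/2$.

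Finally, I would run the induction. Assuming $\bbe[V(x_{K_{s-1}+1},x^*)]\le 2^{-(s-1)}V(x_1,x^*)$, the deterministic term in the recursion gives at most $2^{-(s-1)}V(x_1,x^*)/4 \le 2^{-s}V(x_1,x^*)/2$ from the first lower bound on $k_s$, while the geometric growth $k_s\gtrsim 2^{s}$ in the second lower bound shrinks the noise term to at most $2^{-s}V(x_1,x^*)/2$. Adding yields $\bbe[V(x_{K_s+1},x^*)]\le 2^{-s}V(x_1,x^*)$, as claimed. The only mildly delicate point is bookkeeping of the $R^2$ term across epochs, but since $R^2$ is defined in terms of $V(x_1,x^*)$, $G$, and $\sigma$ (all fixed), and the warm-up projection onto $\{x:\|x\|\le G\}$ is repeated at the start of every epoch, the argument in Corollary~\ref{FTD_boundness1_1} guaranteeing $\bbe[\|x_t-x^*\|^2]\le R^2$ transfers verbatim. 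This is the step I would be most careful with, since it relies on the restart's projection to dampen any accumulated growth before switching to $X_t=X$ within the epoch.
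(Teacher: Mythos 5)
Your proposal is correct and follows essentially the same route as the paper, which proves this corollary in one line by "applying Corollary \ref{FTD_boundness1_1} into the proof of Corollary \ref{restart_stepsize_FastTD}"; you simply make explicit the per-epoch recursion, the base case, the induction, and the (correct) observation that the global $R^2$ bound persists across epoch restarts because each epoch's projection warm-up and the induction in Corollary \ref{FTD_boundness1_1} re-establish $\bbe[\|x_t-x^*\|^2]\le R^2$ given $\bbe[V(x_{K_{s-1}+1},x^*)]\le V(x_1,x^*)$.
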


In view of Corollary \ref{restart_stepsize_FastTD_2} the number of samples required by the FTD algorithm to find a solution $ \bar{x} \in X$ s.t. $ \mathbb{E}[V(\bar{x}, x^*)] \leq \epsilon $ is bounded by 
$${\cal O}\{ \max( \tfrac{\underline{\tau} (L+\varsigma) }{\mu} \log\tfrac{V(x_1,x^*)}{\epsilon},  \tfrac{\underline{\tau} (\sigma^2+\varsigma^2 R^2) }{\mu^2 \epsilon} ) \}.$$

Now we proceed to the mini-batch method. For this method we are bound to use 
$V(x,y)=\tfrac{\|x-y\|_2^2}{2}$. We assume that we have access to $m$ independent Markovian streams denoted by
$ \xi_{0,(i)}, \hdots, \xi_{t ,(i)}$, where $i \in [m]$.  At the time of an algorithm  update we form
$\tilde{F}(x_t, \xi_{t ,(i)}^{\tau})$ separately and via a parallel summation algorithm we define 
\begin{align}\label{def_mini_batch}
 \tilde{F}(x_t, \xi_t^{\tau}) := \tfrac{1}{m} \sum_{i=1}^m \tilde{F}(x_t, \xi_{t ,(i)}^{\tau}).
\end{align}
 Note that in the mini-batch setting
\begin{align}\label{property_mini_batch}
& \mathbb{E}[ \| \tilde{F}(x_t, \xi_t^{\tau}) - \mathbb{E}[\tilde{F}(x_t, \xi_t^{\tau})| \mathcal{F}_{t-1}]\|^2]  \nn\\  = &  
\mathbb{E}[ \tfrac{1}{m^2} \langle \tsum_{i=1}^m \tilde{F}(x_t, \xi_{t ,(i)}^{\tau}) - \mathbb{E}[\tilde{F}(x_t, \xi_{t ,(i)}^{\tau})| \mathcal{F}_{t-1}],  \tsum_{i=1}^m \tilde{F}(x_t, \xi_{t ,(i)}^{\tau}) - \mathbb{E}[\tilde{F}(x_t, \xi_{t ,(i)}^{\tau})| \mathcal{F}_{t-1}]\rangle ] \nn\\ 
 = & \tfrac{1}{m^2} \tsum_{i=1}^m \mathbb{E}[ \| \tilde{F}(x_t, \xi_{t ,(i)}^{\tau}) - \mathbb{E}[\tilde{F}(x_t, \xi_{t ,(i)}^{\tau})| \mathcal{F}_{t-1}]\|^2]  \leq \tfrac{\sigma_t^2}{m},
\end{align}
where $\sigma_t^2 :=\tfrac{\sigma^2 + \varsigma^2\bbe[\|x_t-x^*\|^2]}{2}$. The second equality utilizes the independence of the individual Markovian sample trajectories, and the inequality utilizes \eqref{variance_bound}.

\begin{corollary} \label{FTD_boundness2_1}
	Let $\{x_t\}$ be generated by Algorithm \ref{alg:FastTD_skipping} with constant batch-size $m=\lceil\tfrac{\varsigma}{\mu}\rceil$ for the first $\lceil t_0^2 \rceil$ iterations and $m=1$ after $\lceil t_0^2 \rceil$ iterations. If the parameters $\tau$, $\{\gamma_t\}$, $\{\lambda_t\}$, $\{\theta_t\}$ are selected as in Corollary \ref{step_size_FastTD_skipping} and $t_0 = \max\{\tfrac{8L}{\mu},\tfrac{60 \varsigma }{\mu}\}$, then
	$$
	\bbe[\|x_t-x^*\|^2]\leq R^2, ~\forall t\in \mathbb{Z}_+,
	$$
	and 
	\begin{align}\label{corr_FTD1_2}
	\bbe[V(x_{k+1},x^*)] 
	\leq \tfrac{2( t_0 + 1)(t_0+2) V(x_1,x^*)}{ (k + t_0)(k+t_0+1)}  +  \tfrac{40(k+1) (\sigma^2+\varsigma^2R^2)}{\mu^2  (k + t_0)(k+t_0+1)}.
	\end{align}
	where $R^2:=6 \|x_1-x^*\|^2+\tfrac{2}{\varsigma^2}\sigma^2$.
\end{corollary}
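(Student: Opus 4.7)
The plan is to mirror the structure of the proof of Corollary~\ref{FTD_boundness1_1}, with the projection step in Phase~1 replaced by mini-batch variance reduction. Specifically, I will establish the uniform boundedness claim $\bbe[\|x_t - x^*\|^2] \le R^2$ by strong induction on $t$, and then plug this bound back into the (mini-batch analog of the) Corollary~\ref{step_size_FastTD_skipping} estimate to obtain~\eqref{corr_FTD1_2}. The base case $t=1$ is immediate since $R^2 \ge \|x_1 - x^*\|^2$. For the inductive step, the crucial observation is \eqref{property_mini_batch}: with batch size $m$, Assumption~C effectively becomes $\bbe[\|\delta_t^\tau\|_*^2\,|\,\mathcal{F}_{t-1}] \le \sigma^2/m + (\varsigma^2/m)\|x_t-x^*\|^2$. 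Thus all conclusions of Theorem~\ref{the_FastTD_GMVI} remain valid if we replace $\sigma^2, \varsigma^2$ by $\sigma^2/m, \varsigma^2/m$ during the first $\lceil t_0^2 \rceil$ iterations.

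Assuming the inductive hypothesis $\bbe[\|x_s-x^*\|^2] \le R^2$ for all $s \le k$, I would bound $\bbe[V(x_{k+1},x^*)]$ via the mini-batch analog of \eqref{corr_FTD1}, namely
\[
\bbe[V(x_{k+1},x^*)] \le \tfrac{2(t_0+1)(t_0+2)V(x_1,x^*)}{(k+t_0)(k+t_0+1)} + \tfrac{40(k+1)(\sigma^2+\varsigma^2 R^2)}{m\,\mu^2(k+t_0)(k+t_0+1)},
\]
where the factor $1/m$ in the noise term comes from the reduced effective variance in Phase~1 (for $k \le \lceil t_0^2\rceil$; in Phase~2 one takes $m=1$). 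The verification of this inequality is identical to the proof of Corollary~\ref{step_size_FastTD_skipping}, with $D_X^2$ formally replaced by $R^2$ coming from the inductive hypothesis rather than a deterministic diameter bound.

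At the transition point $k = \lceil t_0^2 \rceil$, using $t_0 \ge 60\varsigma/\mu$ and $m \ge \varsigma/\mu$, the first term is bounded by roughly $2V(x_1,x^*)/t_0^2$, while for the second term one estimates
\[
\tfrac{40(t_0^2+1)}{m\mu^2(t_0^2+t_0)(t_0^2+t_0+1)} \le \tfrac{80}{m\mu^2 t_0^2} \le \tfrac{80}{3600\varsigma^2},
\]
whence $\bbe[V(x_{\lceil t_0^2\rceil+1},x^*)] \le V(x_1,x^*) + \tfrac{\sigma^2}{45\varsigma^2} + \tfrac{R^2}{45} \le R^2/2$, since $R^2/2 = 6V(x_1,x^*) + \sigma^2/\varsigma^2$ and $R^2 = 12V(x_1,x^*) + 2\sigma^2/\varsigma^2$. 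This yields $\bbe[\|x_{\lceil t_0^2\rceil+1}-x^*\|^2] \le R^2$, closing the induction for the end of Phase~1. For $k > \lceil t_0^2 \rceil$ (Phase~2 with $m=1$), the inductive hypothesis lets me reapply the same estimate; since $(k+1)/[(k+t_0)(k+t_0+1)]$ is decreasing in $k$, the Phase~2 contribution of the noise term remains bounded by $\tfrac{40(\sigma^2+\varsigma^2 R^2)}{\mu^2 t_0^2} \le \tfrac{R^2}{45} + \tfrac{\sigma^2}{90\varsigma^2}$, so $\bbe[V(x_{k+1},x^*)] \le R^2/2$ persists, giving $\bbe[\|x_{k+1}-x^*\|^2] \le R^2$.

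Once the uniform bound $\bbe[\|x_t-x^*\|^2] \le R^2$ is established for all $t$, the convergence bound~\eqref{corr_FTD1_2} follows directly by applying the Corollary~\ref{step_size_FastTD_skipping} estimate one more time with $D_X^2$ replaced by $R^2$. The main obstacle in this plan is getting the constants to work out tightly in the Phase-1 endpoint calculation: in particular, the choices $t_0 \ge 60\varsigma/\mu$, $m \ge \varsigma/\mu$, and the precise form $R^2 = 6\|x_1-x^*\|^2 + 2\sigma^2/\varsigma^2$ are engineered so that the Phase-1 bound at $k=\lceil t_0^2\rceil$ compresses below $R^2/2$; a more careful tracking of $\sum \theta_t \gamma_t^2 \lambda_t^2$ and $\theta_k \gamma_k^2$ than shown in the proof of Corollary~\ref{step_size_FastTD_skipping} may be needed to keep the inductive estimate self-consistent in Phase~2, where the noise reverts to the full $\sigma^2 + \varsigma^2 R^2$ but the accumulated stepsize sum must still fit within the shrinking $\theta_k(2\mu\gamma_k+\tfrac12)$ denominator.
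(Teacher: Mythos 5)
Your proposal is correct and follows essentially the same route as the paper: a two-phase induction on $t$, using the mini-batch analog of the Corollary~\ref{step_size_FastTD_skipping} bound (noise scaled by $1/m$) during the first $\lceil t_0^2\rceil$ iterations and the full bound with $m=1$ afterwards, with $t_0$ and $R^2$ engineered so that the recursion compresses below $R^2$. Two small points of care: the inductive step must be verified at \emph{every} $k\le \lceil t_0^2\rceil$, not only at the transition point, and your transition-point estimate $\tfrac{80}{m\mu^2 t_0^2}\le \tfrac{80}{3600\varsigma^2}$ is dimensionally off (it needs $\varsigma\ge\mu$); the paper instead uses only one factor $(k+t_0)\ge t_0$ together with $m\mu^2 t_0\ge \tfrac{\varsigma}{\mu}\cdot\mu^2\cdot\tfrac{60\varsigma}{\mu}=60\varsigma^2$, which yields the uniform Phase-1 bound $\tfrac{40(k+1)}{m\mu^2(k+t_0)(k+t_0+1)}\le\tfrac{2}{3\varsigma^2}$ for all such $k$ and closes the induction exactly as in your plan.
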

\begin{proof}
	Note that  \eqnok{corr_FTD1} still holds with $(k+1) (\sigma^2+\varsigma^2 D_X^2)$ replaced by $\tsum_{t=1}^{k}2\sigma_t^2+\sigma_1^2+\sigma_k^2$. In view of the mini-batch, we have that for all $k<\lceil t_0^2 \rceil$,
	$$
	\bbe[V(x_{k+1},x^*)] 
	\leq \tfrac{2( t_0 + 1)(t_0+2) V(x_1,x^*)}{ (k + t_0)(k+t_0+1)}  +  \tfrac{40(\sum_{t=1}^{k}2\sigma_t^2+\sigma_1^2+\sigma_k^2)}{\varsigma\mu  (k + t_0)(k+t_0+1)}.
	$$
	By \eqnok{corr_FTD1}, for $k=1$ we have 
	$$
	\bbe[\|x_{2}-x^*\|^2] 
	\leq 2\|x_1-x^*\|^2 +  \tfrac{80 (\sigma^2+\varsigma^2\|x_1-x^*\|^2)}{\varsigma\mu  (1 + t_0)(2+t_0)}
	\leq 2\|x_1-x^*\|^2 + \tfrac{2}{3\varsigma^2}\sigma^2 + \tfrac{2}{3}\|x_1-x^*\|^2\leq R^2.
	$$
	Now assume that $\bbe[\|x_t-x^*\|^2]\leq R^2, ~\forall t=1,2,...,\hat{k}$, where $\hat{k} \leq \lceil t_0^2 \rceil$ then
	\begin{align*}
	\bbe[\|x_{\hat k+1}-x^*\|^2] 
	\leq \tfrac{2(t_0+1)(t_0+2)\|x_1-x^*\|^2}{ (\hat k + t_0)(\hat k+t_0+1)} +  \tfrac{40 (\hat k+1) ( \sigma^2+ \varsigma^2R^2)}{\varsigma\mu  (\hat k+ t_0)(\hat k+t_0+1)}
	\leq 2\|x_1-x^*\|^2 + \tfrac{2}{3\varsigma^2}\sigma^2 + \tfrac{2}{3}R^2 \leq R^2.
	\end{align*}
	Now assume that $\bbe[V(x_t,x^*)]\leq R^2, ~\forall t=1,2,...,\hat{k}$, where $\hat{k}>\lceil t_0^2 \rceil$ then
	\begin{align*}
	\bbe[\|x_{\hat k+1}-x^*\|^2] 
	\leq \tfrac{2(t_0+1)(t_0+2)\|x_1-x^*\|^2}{ (\hat k + t_0)(\hat k+t_0+1)} +  \tfrac{40 (\hat k+1) ( \sigma^2+ \varsigma^2R^2)}{\mu^2  (\hat k+ t_0)(\hat k+t_0+1)}\leq 2\|x_1-x^*\|^2  + \tfrac{2}{3\varsigma^2}\hat \sigma^2 + \tfrac{2}{3}R^2 \leq R^2.
	\end{align*}
	As such, $\bbe[\|x_t-x^*\|^2]\leq R^2,~ \forall t\in \mathbb{Z}_+$.
\end{proof} 

The method above has the same convergence rate as Corollary \ref{FTD_boundness1_1}. 
}

\subsection{Robust analysis for FTD}

In this subsection, we consider stochastic generalized monotone VIs which satisfy  \eqnok{G_monotone0}
with $\mu = 0$. Throughout this subsection we assume $\|\cdot\|=\|\cdot\|_2$, $V(x,y)=\tfrac{1}{2}\|x-y\|^2$, and consequently
\beq \label{omega_Lip}
\|\nabla \o(x_1) - \nabla \o(x_2)\| \le  \|x_1 - x_2\|, \ \ \forall x_1, x_2 \in X.
\eeq
 Our goal is to show the FTD method is robust in the sense that it converges when the
modulus $\mu$ is rather small if we use a mini-batch of size $m$, where the mini-batch operator is defined in \eqref{def_mini_batch}.  First, we use the induction method to prove that under constant stepsize, $\bbe[\|x_t-x^*\|^2]$ is bounded.
%Note that by the independence of the sample paths, we have $\bbe[\|\tilde{F}(x_t)-\bbe[F(x_t)|\mathcal{F}_{t-1}]\|^2|\mathcal{F}_{t-1}]\leq \tilde{\sigma_t}^2 := \tfrac{\sigma^2+\varsigma^2 \|x_t-x^*\|^2}{m} $ with probability 1.

\begin{comment}
\begin{proposition} \label{FTD_boundness3}
	Let $\{x_t\}$ be generated by Algorithm \ref{alg:FastTD_skipping} with constant batch-size $m=k+1$. Set $\|\cdot\|=\|\cdot\|_2$ and $V(x,y)=\tfrac{\|x-y\|_2^2}{2}$.  If the parameters $\tau$, $\{\lambda_t\}$, $\{\theta_t\}$ are selected as in Corollary \ref{step_size_FastTD_skipping} and $\gamma_t=\gamma\leq \min\{\tfrac{1}{4L},\tfrac{1}{8\sqrt{\varsigma}}\}$, then
	$$
	\bbe[\|x_t-x^*\|^2]\leq R^2, ~\forall t\in \mathbb{Z}_+,
	$$
	and \eqref{GMVI_result_FastTD} holds with $ \sigma^2 $ replaced by 
	$\hat{\sigma}^2+\varsigma R^2$,
	where $R^2:=11 V(x_1,x^*)+43 \gamma^2\hat{\sigma}^2$.
\end{proposition}
\begin{proof}
	Note that  \eqnok{f4} still holds with $\tilde \sigma^2$ replaced by $\tfrac{2\hat \sigma^2 + \varsigma R^2}{2(k+1)}$.
	For $k=1$ we have 
	$$
	\|x_1-x^*\|^2\leq 2V(x_1,x^*) < R^2.
	$$
	Now assume that $\bbe[\|x_t-x^*\|]\leq R^2, ~\forall t=1,2,...,\hat{k}$, 
	\begin{align*}
	(\tfrac{1}{2}-4L^2\gamma^2)\bbe[\|x_{\hat k +1}-x^*\|^2]\leq V(x_1,x^*) +\tfrac{R^2}{8}+4\gamma^2 \hat \sigma^2 + 2\gamma^2 \varsigma R^2 \leq \tfrac{R^2}{4},
	\end{align*}
	and note also $\gamma\leq \tfrac{1}{4L}$,
	as such, $\bbe[V(x_{t},x^*)]\leq R^2,~ \forall t\in \mathbb{Z}_+$.
\end{proof} 
\end{comment}

\begin{lemma} \label{bound_norm}
	\textcolor{black}{Let \textit{Assumptions A, B} and \textit{C} hold.} Let $x_t$, $t=1, \ldots, k+1$, be generated by the FTD method in Algorithm~\ref{alg:FastTD_skipping}. If \beq \label{step_condition} \theta_t=1,~\lambda_t=1,~\gamma_t=\gamma= \min\{\tfrac{1}{4L},\tfrac{1}{8\sqrt{2}\varsigma}\},~\rho^\tau\le\tfrac{1}{16\gamma C(k+1)}\eeq and  a constant mini-batch size is set to $m=k+1$, then
		{\color{black}
	\beq \label{bound_R}
	\bbe[\|x_{t}-x^*\|^2] \leq R^2, ~~\forall t =1,2,...,k+1,
	\eeq 
	where
	\beq \label{def_R}
	 R^2:=4\|x_1-x^*\|^2+32\gamma^2\sigma^2.
	\eeq}
\end{lemma}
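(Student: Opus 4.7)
The plan is to prove the claim by induction on $t$, using Proposition \ref{prop_FastTD} at each step with its upper index truncated at the current iteration. The base case $t=1$ is immediate from $R^2 \ge 4\|x_1-x^*\|^2$. For the inductive step, suppose $\bbe[\|x_s-x^*\|^2] \le R^2$ for $s=1,\ldots,j$ with $j \le k$, and apply Proposition \ref{prop_FastTD} with its running upper limit replaced by $j$ and the constant choices $\theta_t=1$, $\gamma_t=\gamma$, $\lambda_t=1$; conditions \eqref{theta_t} and \eqref{split_8} reduce to $\gamma=\gamma$ and $16L^2\gamma^2\le 1$, both automatic under $\gamma\le 1/(4L)$. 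Setting $x=x^*$, invoking \eqref{G_monotone0} with $\mu=0$ to discard $\gamma\langle F(x_{s+1}),x_{s+1}-x^*\rangle\ge 0$, telescoping the $V(x_s,x^*)$ terms under $V(x,y)=\tfrac12\|x-y\|^2$, and using $\tfrac12-4L^2\gamma^2\ge\tfrac14$ produces a bound of the form
\begin{equation*}
\tfrac{1}{4}\,\bbe[\|x_{j+1}-x^*\|^2] \le \tfrac{1}{2}\|x_1-x^*\|^2 + \mathcal{E}_{\mathrm{bias}} + \mathcal{E}_{\mathrm{var}},
\end{equation*}
where $\mathcal{E}_{\mathrm{bias}}$ gathers the inner-product error terms and $\mathcal{E}_{\mathrm{var}}$ gathers the squared-error terms.

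Next I would estimate $\mathcal{E}_{\mathrm{bias}}$ by exploiting the tower property: since $x_s\in\mathcal{F}_{s-1}$, $\bbe\langle\delta_s^\tau,x_s-x^*\rangle = \bbe\langle\bbe[\delta_s^\tau|\mathcal{F}_{s-1}],x_s-x^*\rangle$, and Cauchy–Schwarz combined with Assumption D yields $|\bbe\langle\delta_s^\tau,x_s-x^*\rangle|\le C\rho^\tau\bbe[\|x_s-x^*\|^2]$. The condition $\rho^\tau\le 1/[16\gamma C(k+1)]$ then gives $\gamma\sum_{s=1}^{j} C\rho^\tau\bbe[\|x_s-x^*\|^2]\le (j/[16(k+1)])\,R^2\le R^2/16$, and the isolated $\gamma\langle\delta_j^\tau,x_j-x^*\rangle$ contributes at most $R^2/[16(k+1)]$.

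For $\mathcal{E}_{\mathrm{var}}$, I would decompose $\delta_s^\tau=(\tilde F(x_s,\xi_s^\tau)-\bbe[\tilde F(x_s,\xi_s^\tau)|\mathcal{F}_{s-1}])+(\bbe[\tilde F(x_s,\xi_s^\tau)|\mathcal{F}_{s-1}]-F(x_s))$. The mini-batch relation \eqref{property_mini_batch} with $m=k+1$ bounds the first part's squared norm in expectation by $(\sigma^2+\varsigma^2\bbe[\|x_s-x^*\|^2])/(k+1)$, while Assumption D bounds the second by $C^2\rho^{2\tau}\bbe[\|x_s-x^*\|^2]$. The stepsize condition $\gamma^2\varsigma^2\le 1/128$ and $\rho^\tau\le 1/[16\gamma C(k+1)]$ then transform $\gamma^2\bbe[\|\delta_s^\tau\|^2]$ into $\gamma^2\sigma^2/(k+1)+O(R^2/[(k+1)^2])$; using $\|\delta_s^\tau-\delta_{s-1}^\tau\|^2\le 2\|\delta_s^\tau\|^2+2\|\delta_{s-1}^\tau\|^2$, the total contribution of $\sum_{s=1}^{j}\gamma^2\bbe[\|\delta_s^\tau-\delta_{s-1}^\tau\|^2]+4\gamma^2\bbe[\|\delta_j^\tau\|^2]$ is at most a constant times $\gamma^2\sigma^2$ plus a constant times $R^2/(k+1)$.

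Combining all contributions produces an inequality of the form $\bbe[\|x_{j+1}-x^*\|^2]\le 2\|x_1-x^*\|^2+c_1\gamma^2\sigma^2+\eta R^2$ with $\eta<1$, and the definition $R^2=4\|x_1-x^*\|^2+32\gamma^2\sigma^2$ is chosen to dominate the right-hand side, closing the induction. The main obstacle I anticipate is the careful bookkeeping: every error term has to be absorbed either into the $\gamma^2\sigma^2$ component of $R^2$ or into a coefficient of $R^2$ strictly less than $(1-2/R^2\cdot\|x_1-x^*\|^2 - 32\gamma^2\sigma^2/R^2)$. The design choices $m=k+1$ and $\rho^\tau\le 1/[16\gamma C(k+1)]$ are exactly calibrated so that terms which naively accumulate linearly in $j\le k$ instead remain of constant order, which is what makes the recurrence self-consistent.
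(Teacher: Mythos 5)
Your proposal is correct and follows essentially the same route as the paper: plug the constant parameters into the bound of Proposition \ref{prop_FastTD} (equivalently, the intermediate inequality \eqnok{f}) with $x=x^*$, drop the nonnegative monotonicity term, control the bias terms via Assumption D together with $\rho^\tau\le 1/[16\gamma C(k+1)]$ and the variance terms via the mini-batch relation \eqnok{property_mini_batch} with $m=k+1$, and close the induction using that $R^2=4\|x_1-x^*\|^2+32\gamma^2\sigma^2$ absorbs the right-hand side. The paper's inequality \eqref{f4} and induction step \eqref{f5} carry out exactly the bookkeeping you outline.
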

	
\begin{proof}
	\textcolor{black}{
From the parameters selected, \eqnok{theta_t} and \eqnok{split_8} are satisfied. Therefore, considering \eqref{property_mini_batch}, \eqnok{f} still holds with $\sigma^2$ replaced by $\tfrac{\sigma^2}{k+1}$ and $\varsigma^2$ replaced by $\tfrac{\varsigma^2}{k+1}$. By plugging $\theta_t =1$, $\lambda_t=1$ and $\gamma_t=\gamma$ into \eqnok{f}, we can simplify it to}
{\color{black}
\begin{align*}
&\tsum_{t=1}^k \gamma \bbe[ \langle  F(x_{t+1}) , x_{t+1} - x^* \rangle] + \bbe[V(x_{k+1},x^*)]- 4 L^2  \gamma^2\bbe[ \| x_{k+1} - x^* \|^2]  \nn\\
& \leq V(x_1,x^*) +  \tsum_{t=1}^k4\gamma^2 C^2\rho^{2\tau}(\|x_t-x^*\|^2+\|x_{t-1}-x^*\|^2) + \tsum_{t=1}^k   \tfrac{\gamma^2}{k+1} (4\sigma^2+2\varsigma^2\bbe[\|x_t-x^*\|^2]+2\varsigma^2\bbe[\|x_{t-1}-x^*\|^2])\nn\\
& \quad + \tsum_{t=1}^k\gamma C\rho^\tau\bbe[\|x_t-x^*\|^2]+ 8\gamma^2C^2\rho^{2\tau}\bbe[\|x_k-x^*\|^2]+\tfrac{4\gamma^2}{k+1}(\sigma^2+\varsigma^2\bbe[\|x_k-x^*\|^2])+\gamma C\rho^\tau\bbe[\|x_k-x^*\|^2].
\end{align*}
Invoking the fact $ \langle F(x_{t+1}) , x_{t+1} - x^* \rangle \ge 0$ due to \eqnok{G_monotone1}, and the condition $V(x,y)=\tfrac{1}{2}\|x-y\|^2$, we have
\begin{align*}
(\tfrac{1}{2}- 4 L^2\gamma^2)  \bbe[ \| x_{k+1} - x^* \|^2]  
\leq& \tfrac{1}{2}\|x_1-x^*\|^2 +  \tsum_{t=1}^k 4\gamma^2 C^2\rho^{2\tau}(\|x_t-x^*\|^2+\|x_{t-1}-x^*\|^2)+\tsum_{t=1}^k   \tfrac{\gamma^2}{k+1} (4\sigma^2\nn\\
+& 2\varsigma^2\bbe[\|x_t-x^*\|^2]+2\varsigma^2\bbe[\|x_{t-1}-x^*\|^2])+\tsum_{t=1}^k\gamma C\rho^\tau\bbe[\|x_t-x^*\|^2]\nn\\
+& 8\gamma^2C^2\rho^{2\tau}\bbe[\|x_k-x^*\|^2]+\tfrac{4\gamma^2}{k+1}(\sigma^2+\varsigma^2\bbe[\|x_k-x^*\|^2])+\gamma C\rho^\tau\bbe[\|x_k-x^*\|^2].
\end{align*}
Pluging the selection of $\tau$ into the above inequality, we obtain
\begin{align}\label{f4}
(\tfrac{1}{2}- 4 L^2\gamma^2)  \bbe[ \| x_{k+1} - x^* \|^2]  
\leq& \tfrac{1}{2}\|x_1-x^*\|^2 +  \tsum_{t=1}^k \tfrac{1}{64(k+1)}(\|x_t-x^*\|^2+\|x_{t-1}-x^*\|^2)+\tsum_{t=1}^k   \tfrac{\gamma^2}{k+1} (4\sigma^2\nn\\
+& 2\varsigma^2\bbe[\|x_t-x^*\|^2]+2\varsigma^2\bbe[\|x_{t-1}-x^*\|^2])+\tsum_{t=1}^k\tfrac{1}{16(k+1)}\bbe[\|x_t-x^*\|^2]\nn\\
+& \tfrac{1}{32(k+1)}\bbe[\|x_k-x^*\|^2]+\tfrac{4\gamma^2}{k+1}(\sigma^2+\varsigma^2\bbe[\|x_k-x^*\|^2])+\tfrac{1}{16(k+1)}\bbe[\|x_k-x^*\|^2].
\end{align}}
We now prove the result by induction. For the base case, $t=1$, obviously, we have
$$\|x_1-x^*\|^2 \leq R^2.$$
Now assume that $\bbe[\|x_{t}-x^*\|^2] \leq R^2, ~~\forall t =1,2,...,\hat{k}$, in which $\hat{k}<k$. From \eqnok{step_condition} and \eqnok{f4}, we have
{\color{black}
\begin{align}
	\tfrac{1}{4}\bbe[\|x_{\hat k +1}-x^*\|^2]\leq \tfrac{1}{2}\|x_1-x^*\|^2+\tfrac{3R^2}{32}+4\gamma^2  \sigma^2 + 4\gamma^2 \varsigma^2 R^2 \leq \tfrac{1}{2}\|x_1-x^*\|^2+\tfrac{R^2}{8}+4\gamma^2  \sigma^2  = \tfrac{R^2}{4},
\label{f5}
\end{align}}
so that 
$
\bbe[ \| x_{\hat{k}+1} - x^* \|^2] \leq R^2.
$
As such, $\bbe[\|x_{t}-x^*\|] \leq R^2, ~~\forall i =1,2,...,k+1.$
\end{proof}

\textcolor{black}{With the proof above, mini-batch with constant batch-size $m = k$ is required to show the boundness of the iterates (see \eqref{f4} and \eqref{f5}). However, if the feasible region $X$ is bounded, we can untilize a progressively increasing batch-size, i.e.,  $m_t=t$, and the convergence result in the following lemma will be valid as well.} Next, Lemma~\ref{lemma_bnd_res_FastTD} provides a technical result regarding
the relation between the residual of $x_{r+1}$ and the summation of squared distances $\tsum_{t=1}^k\bbe[ \|x_{t+1}- x_t\|^2]$.  We define the 
output solution of the FTD method as $x_{r+1}$, where $r$ is uniformly chosen from $\{2, 3, \ldots, k\}$.

\begin{lemma} \label{lemma_bnd_res_FastTD}
	\textcolor{black}{Let \textit{Assumptions A, B} and \textit{C} hold.} Let $x_t$, $t=1, \ldots, k+1$, be generated by the FTD method in Algorithm~\ref{alg:FastTD_skipping}.
	Assume $r$ is uniformly chosen from $\{2, 3, \ldots, k\}$. If
	\beq \label{cond_rel_two_iterate_FastTD}
	\tsum_{t=1}^k \bbe[\|x_{t+1}-x_t\|^2] \le \tilde{\delta},
	\eeq 
	and the parameters satisfy \eqnok{step_condition},
	then 
	{\color{black}
		\begin{align}\label{res}
	\bbe[\res(x_{r+1})] \le(1+2\lambda_r)\sqrt{\tfrac{\sigma^2}{k+1}+\tfrac{\varsigma^2 R^2}{k+1}+2C^2\rho^{2\tau}R^2}+ 2 (L + \tfrac{1}{\gamma_r} + L \lambda_r)\tfrac{\sqrt{2 \tilde \delta}}{\sqrt{k-1}}.
	\end{align}}
\end{lemma}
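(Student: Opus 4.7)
The plan is to use the first-order optimality condition of the projection subproblem~\eqref{FastTD_skipping_step} to exhibit an explicit vector $y_{t+1}\in -N_X(x_{t+1})$ whose distance to $F(x_{t+1})$ can be controlled term-by-term. Since $\res(x_{t+1})\le\|y_{t+1}-F(x_{t+1})\|_*$ by definition of the residual, averaging this bound over $t\in\{2,\dots,k\}$ corresponds exactly to taking $\bbe_r$ over the uniformly chosen index $r$, at which point the hypotheses~\eqref{cond_rel_two_iterate_FastTD} and the iterate boundedness~\eqref{bound_R} from Lemma~\ref{bound_norm} will feed in to control the remaining terms.

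First, the first-order optimality condition for~\eqref{FastTD_skipping_step} gives
\[
y_{t+1}:=\tilde F(x_t,\xi_t^\tau)+\lambda_t\bigl[\tilde F(x_t,\xi_t^\tau)-\tilde F(x_{t-1},\xi_{t-1}^\tau)\bigr]+\tfrac{1}{\gamma_t}\bigl[\nabla\omega(x_{t+1})-\nabla\omega(x_t)\bigr]\in -N_X(x_{t+1}).
\]
Writing $\tilde F(x_t,\xi_t^\tau)=F(x_t)+\delta_t^\tau$ and splitting the extrapolated increment as $\tilde F(x_t,\xi_t^\tau)-\tilde F(x_{t-1},\xi_{t-1}^\tau)=[\delta_t^\tau-\delta_{t-1}^\tau]+[F(x_t)-F(x_{t-1})]$, the triangle inequality together with the Lipschitz continuity of $F$ from~\eqref{Lipschitz} and the bound~\eqref{omega_Lip} yields the pointwise estimate
\[
\res(x_{t+1})\le\|\delta_t^\tau\|_*+\lambda_t\|\delta_t^\tau-\delta_{t-1}^\tau\|_*+\bigl(L+\tfrac{1}{\gamma_t}\bigr)\|x_{t+1}-x_t\|+\lambda_t L\,\|x_t-x_{t-1}\|.
\]
The reason for doing the $\delta^\tau/F$ split rather than bounding the raw operator difference is to avoid any Lipschitz assumption on $\tilde F(\cdot,\xi)$, consistent with the fact that Assumption A is not needed for the FTD analysis.

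Next, average the above over $t=2,\dots,k$ and take full expectation. For the noise terms, combining the mini-batch variance identity~\eqref{property_mini_batch} with Assumption~D and using $\bbe[\|x_t-x^*\|^2]\le R^2$ from Lemma~\ref{bound_norm} gives $\bbe[\|\delta_t^\tau\|_*^2]\le\sigma^2/(k+1)+\varsigma^2R^2/(k+1)+2C^2\rho^{2\tau}R^2=:A^2$; Jensen controls $\bbe[\|\delta_t^\tau\|_*]\le A$, and Young's inequality gives $\bbe[\|\delta_t^\tau-\delta_{t-1}^\tau\|_*]\le 2A$, producing the $(1+2\lambda_r)$ prefactor on the square-root term of~\eqref{res}. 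For the deterministic step-size terms, Cauchy--Schwarz and Jensen yield
\[
\tfrac{1}{k-1}\sum_{t=2}^k\bbe\|x_{t+1}-x_t\|+\tfrac{1}{k-1}\sum_{t=2}^k\bbe\|x_t-x_{t-1}\|\le\tfrac{1}{\sqrt{k-1}}\sqrt{2\sum_{t=1}^k\bbe\|x_{t+1}-x_t\|^2}\le\tfrac{\sqrt{2\tilde\delta}}{\sqrt{k-1}},
\]
and combining the two step-size sums with coefficient $(L+1/\gamma_r+L\lambda_r)$ (after pulling out a factor of $2$ from the elementary bound $a+b\le 2\max(a,b)$ on the two coefficients) delivers the announced second term.

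The main obstacle is engineering the decomposition in the second step: because the FTD update uses $\tilde F(x_t,\cdot)$ while the residual is evaluated at $x_{t+1}$, one must pay a Lipschitz correction $F(x_t)-F(x_{t+1})$ absorbed as $L\|x_{t+1}-x_t\|$, and a further correction $F(x_t)-F(x_{t-1})$ absorbed as $\lambda_t L\|x_t-x_{t-1}\|$ from the extrapolation, so that only the $\delta^\tau$ errors remain for stochastic handling; careful bookkeeping of these corrections is what simultaneously produces the coefficients $1+2\lambda_r$ (noise) and $2(L+\tfrac{1}{\gamma_r}+L\lambda_r)$ (drift) in~\eqref{res}.
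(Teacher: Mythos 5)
Your proposal follows essentially the same route as the paper's proof: extract the residual certificate $\tilde\delta_r$ from the first-order optimality condition of \eqnok{FastTD_skipping_step}, split it into the noise terms $\delta_r^\tau,\delta_{r-1}^\tau$ plus Lipschitz drift terms $\|x_{r+1}-x_r\|,\|x_r-x_{r-1}\|$, bound the noise via \eqnok{property_mini_batch}, \eqnok{bound_delta_norm1} and the iterate bound $R^2$ from Lemma~\ref{bound_norm}, and control the drift using the uniform choice of $r$ together with Cauchy--Schwarz/Jensen and \eqnok{cond_rel_two_iterate_FastTD}. The only blemishes are cosmetic: your intermediate display bounding the averaged increments by $\sqrt{2\tilde\delta}/\sqrt{k-1}$ is actually off by a factor of $\sqrt{2}$ (the correct Cauchy--Schwarz bound is $2\sqrt{\tilde\delta}/\sqrt{k-1}$), and the triangle inequality is mislabeled as Young's; both are absorbed by the explicit factor of $2$ in \eqnok{res}, so the stated conclusion still follows.
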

%e{\bf GL: do we need to prove this result? Probably just cite the one in the first paper? }

\begin{proof}
	Observe that by the optimality condition of \eqnok{FastTD_skipping_step}, we have
	\beq \label{opt_det_step_FastTD}
	\langle F(x_{r+1}) + \delta_r, x - x_{r+1} \rangle \ge 0 \ \ \forall x \in X,
	\eeq
	with
	\begin{align*}
	\tilde \delta_r := &\tilde F(x_r, \xi_r^\tau) -F(x_{r+1}) + \lambda_r [\tilde F(x_r, \xi_r^\tau) - \tilde F(x_{r-1}, \xi_{r-1}^\tau)] + \tfrac{1}{\gamma_r} [\nabla \o(x_{r+1}) - \nabla \o(x_{r})]\\
	= &(1+\lambda_r) [\tilde F(x_r, \xi_{r}^\tau)-F(x_r)]+ \lambda_r[\tilde F(x_{r-1},, \xi_{r-1}^\tau) -F(x_{r-1})]+[F(x_r) -F(x_{r+1})] \\
	&+ \lambda_R [F(x_r) - F(x_{r-1})] + \tfrac{1}{\gamma_r} [\nabla \o(x_{r+1}) - \nabla \o(x_{r})]
	\end{align*}
	By \eqnok{Lipschitz} and \eqnok{omega_Lip}, we have
	\begin{align}
	\|\tilde \delta_r\| \le&~ (1+\lambda_r) \|\tilde F(x_r, \xi_{r}^\tau)-F(x_r)\| +  \lambda_r \|\tilde F(x_{r-1}, \xi_{r-1}^\tau) -F(x_{r-1})\|\nn \\
	&+\| F(x_r) - F(x_{r+1}) \| + \lambda_r \| F(x_r) -  F(x_{r-1})\| +   \tfrac{1}{\gamma_r} \|\nabla \o(x_{r+1}) - \nabla \o(x_{r})\| \nn \\
	\le&~  (1+\lambda_r) \|\tilde F(x_r, \xi_{r}^\tau)-F(x_r)\| +  \lambda_r \|\tilde F(x_{r-1}, \xi_{r-1}^\tau) -F(x_{r-1})\|+ (L + \tfrac{1}{\gamma_r})\|x_{r+1} - x_r\| + L \lambda_r \|x_r - x_{r-1}\|. \label{bnd_delta_R_FastTD} 
	\end{align}
	{\color{black}
	Taking expectation on both side of the above inequality, and by applying Jensen's ineuqality, $$\bbe[\|\tilde F(x_t, \xi_{t}^\tau)-F(x_t)\|]\le \sqrt{\bbe[\|\tilde F(x_t, \xi_{t}^\tau)-F(x_t)\|^2]} \le \sqrt{\tfrac{\sigma^2}{k+1}+\tfrac{\varsigma^2 R^2}{k+1}+2C^2\rho^{2\tau}R^2},$$ 
	where the second inequality follows from \eqref{property_mini_batch} and \eqref{bound_delta_norm1}.  As such, 
	\begin{align}
	\bbe[\|\tilde \delta_r\|] \le&(1+2\lambda_r)\sqrt{\tfrac{\sigma^2}{k+1}+\tfrac{\varsigma^2 R^2}{k+1}+2C^2\rho^{2\tau}R^2} + (L+\lambda_rL+\tfrac{1}{\gamma_r})(\bbe[\|x_{r+1} - x_r\|]+\bbe[\|x_r - x_{r-1}\|]). \label{bnd_delta_R_FastTD_2} 
	\end{align}}
	It follows from \eqnok{cond_rel_two_iterate_FastTD} that
	\begin{align}
	\tsum_{t=1}^k \left(\bbe[\|x_{t+1} - x_t\|^2] + \bbe[\|x_t - x_{t-1}\|^2] \right) \le 2 \tsum_{t=1}^2 \bbe[\|x_{t+1} - x_t\|^2] \le 2\tilde \delta.
	\end{align}
	The previous conclusion and the fact that $r$ is uniformly chosen from $\{2, 3, \ldots, k\}$ imply that
	\[
	\bbe[\|x_{r+1} - x_r\|^2] + \bbe[\|x_r- x_{r-1}\|^2] \le \tfrac{2 \tilde \delta}{k-1}.
	\]
	and hence 
	\beq \label{bnd_two_diff_FastTD}
	\max\{\bbe[\|x_{r+1} - x_r\|], \bbe[\|x_{r-1} - x_r\|]\} \le \tfrac{\sqrt{2 \tilde \delta}}{\sqrt{k-1}}.
	\eeq
	We then obtain the desired result from the definition of $\res(\cdot)$  and relations
	\eqnok{opt_det_step_FastTD}, \eqnok{bnd_delta_R_FastTD}, and \eqnok{bnd_two_diff_FastTD}.
\end{proof}

We can now show the convergence of the FTD method by showing the convergence of $\bbe[\res(x_{r+1})] $.
{\color{black}
\begin{theorem}\label{lemma_dist_no_strong_monotone_FastTD}
	\textcolor{black}{Let \textit{Assumptions A, B} and \textit{C} hold.} Let $\{x_t\}$ be generated by Algorithm \ref{alg:FastTD_skipping}.
	If the parameters $\{\theta_t\} $, $\{\gamma_t\}$ and $\{\lambda_t\}$ in Algorithm \ref{alg:FastTD_skipping} satisfy \eqnok{step_condition} and the constant batch size $m=k+1$, then 
	\begin{align}
	\tsum_{t=1}^k   \bbe[\|x_{t+1}-x_t\|^2] 
	\leq 4\|x_1-x^*\|^2 + 64\gamma^2\sigma^2+R^2. \label{GMVI_result_FastTD}
	\end{align}
	where $R^2$ is defined in \eqnok{def_R}. Invoking \eqnok{res} and the fact that $r$ is uniformly chosen from $\{2, 3, \ldots, k\}$,
	then 
	\beq \label{bnd_res_no_FastTD}
	\bbe[\res(x_{r+1})] \le3\sqrt{\tfrac{\sigma^2}{k+1}+\tfrac{\varsigma^2 R^2}{k+1}+\tfrac{L^2R^2}{8(k+1)^2}}+ (4L + \tfrac{2}{\gamma} )\tfrac{\sqrt{16V(x_1,x^*)+2R^2+128\gamma^2\sigma^2}}{\sqrt{k}}.
	\eeq
	
	\begin{comment}
	 Or equivalently,
	\beq \label{bnd_res_no_FastTD2}
	\bbe[\res(x_{r+1})] \le3\sqrt{\tfrac{3\sigma^2}{2(k+1)}+\tfrac{2}{(k+1)}L^2V(x_1,x^*)}+ (4L + 8LL_\o )\tfrac{\sqrt{40V(x_1,x^*)+\tfrac{14\sigma^2}{L^2}}}{\sqrt{k}}.
	\eeq
	\end{comment}
\end{theorem}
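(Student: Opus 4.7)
The plan is to derive \eqref{GMVI_result_FastTD} by revisiting the calculation underlying Proposition~\ref{prop_FastTD} while keeping the ``squared-step'' energy that was discarded there, and then to feed the resulting bound into Lemma~\ref{lemma_bnd_res_FastTD} to obtain \eqref{bnd_res_no_FastTD}.

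First, I would re-examine the lower bound on the quantity $\tilde Q$ defined in \eqref{def_tildeQ_FastTD}. The computation in the proof of Proposition~\ref{prop_FastTD} produces, via \eqref{strong_convex_V}, \eqref{split_8}, and Young's inequality,
\[
\tilde Q \;\ge\; \tfrac{\theta_k}{8}\|x_{k+1}-x_k\|^2 \;-\; \tsum_{t=1}^k \theta_t\gamma_t^2\lambda_t^2\|\delta_t^\tau-\delta_{t-1}^\tau\|_*^2 \;+\; \tsum_{t=1}^k \tfrac{\theta_t}{8}\|x_{t+1}-x_t\|^2,
\]
where the last summation is precisely what is subsequently dropped to arrive at \eqref{FastTD_bound_general_tmp}; in the present argument I keep it. Taking expectations, specializing to $\theta_t=1$, $\lambda_t=1$, $\gamma_t=\gamma$, and using $\langle F(x_{t+1}),x_{t+1}-x^*\rangle\ge 0$ (which is \eqref{G_monotone1} with $\mu=0$), the telescoping over $t=1,\dots,k$ collapses to an inequality of the form
\begin{align*}
\tsum_{t=1}^k \tfrac{1}{8}\bbe[\|x_{t+1}-x_t\|^2] + \bbe[V(x_{k+1},x^*)] - 4L^2\gamma^2\bbe[\|x_{k+1}-x^*\|^2] \le V(x_1,x^*) + \mathrm{(noise~terms)}.
\end{align*}
The ``noise terms'' are exactly those appearing on the right-hand side of the analogue of \eqref{f}; by invoking the mini-batch identity \eqref{property_mini_batch} (which divides $\sigma^2$ and $\varsigma^2$ by $m=k+1$) together with \eqref{bound_delta_inner_lhs} and Assumption C, and then replacing every occurrence of $\bbe[\|x_t-x^*\|^2]$ by its uniform bound $R^2$ supplied by Lemma~\ref{bound_norm}, I can make these terms summable and independent of $k$ up to an additive $R^2$. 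Combined with $4L^2\gamma^2\le 1/4$ and discarding $\bbe[V(x_{k+1},x^*)]$, this yields \eqref{GMVI_result_FastTD}.

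For the residual bound I simply invoke Lemma~\ref{lemma_bnd_res_FastTD} with $\tilde\delta:=4\|x_1-x^*\|^2+64\gamma^2\sigma^2+R^2$. The noise prefactor $(1+2\lambda_r)=3$ multiplies $\sqrt{\sigma^2/(k+1)+\varsigma^2 R^2/(k+1)+2C^2\rho^{2\tau}R^2}$, and the bound $\rho^\tau\le\tfrac{1}{16\gamma C(k+1)}$ from \eqref{step_condition} together with $\gamma\le\tfrac{1}{4L}$ converts $2C^2\rho^{2\tau}R^2$ into $\tfrac{L^2 R^2}{8(k+1)^2}$; the deterministic prefactor $2(L+1/\gamma_r+L\lambda_r)=4L+2/\gamma$ multiplies $\sqrt{2\tilde\delta/(k-1)}$, which, after using $\|x_1-x^*\|^2=2V(x_1,x^*)$ and absorbing $\sqrt{k-1}\le\sqrt{k}$ into the constant, gives the second term of \eqref{bnd_res_no_FastTD}.

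The only delicate point is the bookkeeping in the first step: the Young's inequality that extracts $\sum_t\tfrac{\theta_t}{8}\|x_{t+1}-x_t\|^2$ from $\tilde Q$ must be done with a looser split than in Proposition~\ref{prop_FastTD} (roughly, keep an $\tfrac{\theta_t}{8}$ piece aside before absorbing the noise inner product), and one must make sure that the ensuing noise-squared coefficients match what the mini-batch variance reduction and the choice of $\rho^\tau$ in \eqref{step_condition} can absorb. Once that is in place, Lemma~\ref{bound_norm} and Lemma~\ref{lemma_bnd_res_FastTD} do the remaining work.
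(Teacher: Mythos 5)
Your proposal is correct and follows essentially the same route as the paper's proof: you rebound $\tilde Q$ with the looser Young's split that retains the $\sum_t \tfrac{1}{8}\|x_{t+1}-x_t\|^2$ energy (at the cost of a factor $2$ on the $\gamma^2\|\delta_t^\tau-\delta_{t-1}^\tau\|^2$ terms), telescope with $x=x^*$ and $\langle F(x_{t+1}),x_{t+1}-x^*\rangle\ge 0$, control the noise via the mini-batch identity, the choice of $\rho^\tau$ in \eqref{step_condition}, and the uniform bound $R^2$ from Lemma~\ref{bound_norm}, and then feed $\tilde\delta=4\|x_1-x^*\|^2+64\gamma^2\sigma^2+R^2$ into Lemma~\ref{lemma_bnd_res_FastTD}. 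The "delicate point" you flag is exactly the modification the paper makes, so no gap remains.
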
}

\begin{proof} 
	Observe that  \eqnok{eqn:common_bnd_VI_FastTD} still holds. We plug $\theta_t =1$, $\lambda_t=1$ and $\gamma_t=\gamma$ into \eqnok{eqn:common_bnd_VI_FastTD}. However,
	we will bound $\tilde Q_t$ in a slightly different way. 
	\begin{align*}
		\tilde Q_t	
		&\geq   \tsum_{t=1}^k    \left[ -   \gamma L \|   x_t - x_{t-1} \| 
		\|   x_{t+1} - x_t \|  +   V(x_t, x_{t+1}) +   \gamma \langle\delta_t^\tau - \delta_{t-1}^\tau, x_{t+1} - x_t \rangle\right] \\
		& \geq   \tsum_{t=1}^k  \left[-    \gamma L \|   x_t - x_{t-1} \| 
		\|   x_{t+1} - x_t \|  +   \tfrac{1}{8} \|  x_t - x_{t+1} \|^2 +   \tfrac{1 }{8} \|  x_t - x_{t-1} \|^2  \right]  +       \tfrac{1}{8} \|  x_k - x_{k+1} \|^2  \\
		& \quad +   \tsum_{t=1}^k \left[ \gamma \langle\delta_t^\tau - \delta_{t-1}^\tau, x_{t+1} - x_t \rangle + \tfrac{1}{8} \|  x_t - x_{t+1} \|^2  \right] +\tsum_{t=1}^k \tfrac{1}{8} \|  x_t - x_{t+1} \|^2 \\
		&\geq    \tfrac{ 1}{8} \|  x_k - x_{k+1} \|^2 + \tsum_{t=1}^k \left[ \gamma\langle\delta_t^\tau - \delta_{t-1}^\tau, x_{t+1} - x_t \rangle + \tfrac{\theta_{t} }{8} \|  x_t - x_{t+1} \|^2  \right]
		+\tsum_{t=1}^k \tfrac{1 }{8} \|  x_t - x_{t+1} \|^2\\
		&\geq \tfrac{ 1}{8} \|  x_k - x_{k+1} \|^2 -  \tsum_{t=1}^k \left( 2\gamma^2 \|\delta_t^\tau - \delta_{t-1}^\tau \|^2 \right) +\tsum_{t=1}^k \tfrac{1 }{8} \|  x_t - x_{t+1} \|^2,
	\end{align*} 
	where the second inequality follows from \eqnok{strong_convex_V}, the third inequality follows from \eqnok{split_8} and the last one follows from the Young's inequality. Using the above bound of $\tilde Q_t$ in \eqnok{eqn:common_bnd_VI_FastTD}, we obtain
	\begin{align}
	\tsum_{t=1}^k     \Delta V_t(x)  & \geq   \tsum_{t=1}^k \big[ \gamma \langle  F(x_{t+1}) , x_{t+1} - x \rangle \big] + \tsum_{t=1}^k \gamma\langle \delta_t^\tau,x_{t}-x \rangle- \gamma\langle \Delta F_{k+1} , x_{k+1} - x \rangle \nn\\
	& \quad + \gamma\langle \delta_k^\tau , x_{k+1} - x \rangle + \tfrac{1}{8} \|  x_k - x_{k+1} \|^2  -     \tsum_{t=1}^k 2\gamma^2 \|\delta_t^\tau - \delta_{t-1}^\tau \|^2  +\tsum_{t=1}^k \tfrac{1 }{8} \|  x_t - x_{t+1} \|^2\nn \\
	&\ge \tsum_{t=1}^k \big[ \gamma\langle \tilde F(x_{t+1}) , x_{t+1} - x \rangle \big] + \tsum_{t=1}^k \gamma\langle \delta_t^\tau,x_{t}-x \rangle- 4\gamma^2(L^2   \|  x - x_{k+1} \|^2+\|\delta_k^\tau\|^2) \nn\\
	& \quad    +\tsum_{t=1}^k \tfrac{1 }{8} \|  x_t - x_{t+1} \|^2 +\gamma \langle \delta_k^\tau , x_{k} - x \rangle-  \tsum_{t=1}^k 2\gamma^2 \|\delta_t ^\tau- \delta_{t-1} ^\tau\|^2, \label{FastTD_bound_general_tmp3}
	\end{align}
	where the second inequality follows from
	\begin{align*}
	&- \gamma \langle \Delta  F_{k+1} , x_{k+1} - x \rangle  + \gamma \langle \delta_k^\tau , x_{k+1} - x \rangle 
	+\tfrac{1 }{8} \|  x_k - x_{k+1} \|^2 \\
	%=&   - \gamma_k \langle \Delta  F_{k+1} , x_{k+1} - x \rangle    +  \gamma_k \langle \delta_k , x_{k+1} - x_k \rangle+  \gamma_k \langle \delta_k , x_k - x \rangle
	%+ \tfrac{1 }{8} \|  x_k - x_{k+1} \|^2 \\
	\geq &
	-  \gamma L \|  x_k - x_{k+1} \|  \|  x - x_{k+1} \| - \gamma\|\delta_k^\tau\|\|  x_k - x_{k+1} \| +
	\tfrac{1 }{8} \|  x_k - x_{k+1} \|^2 +  \gamma \langle \delta_k^\tau , x_k - x \rangle  \\
	\geq &  - 4 L^2 \gamma^2  \|  x - x_{k+1} \|^2  -  4\gamma^2 \|\delta_k^\tau\|^2 +  \gamma \langle \delta_k^\tau , x_k - x \rangle.
	\end{align*}
	Fixing $x=x^*$, taking expectation on both side of the inequality and using the fact $ \langle F(x_{t+1}) , x_{t+1} - x^* \rangle \ge 0$ due to \eqnok{G_monotone0} , we have
	{\color{black}
	\begin{align*}
		& \tsum_{t=1}^k  \big( \bbe[ V(x_{t+1},x^*)]+ \tfrac{1}{8}  \bbe[\|x_t-x_{t+1}\|^2]\big)- 4 L^2 \gamma^2 \bbe[\| x_{k+1} - x^* \|^2] \nn\\
		& \leq \tsum_{t=1}^k \bbe[V(x_t,x^*)] +\tsum_{t=1}^k\bigg \{ 4\gamma^2\left[\tfrac{2\sigma^2}{k+1}+\left(\tfrac{\varsigma^2}{k+1}+2\rho^{2\tau}C^2\right)\left(\bbe[\|x_t-x^*\|^2]+\bbe[\|x_{t-1}-x^*\|^2]\right)\right]\nn\\ & \quad +\gamma C \rho^\tau \bbe[\|x_t-x^*\|^2]  \bigg\}
		+4\gamma^2 \left\{\tfrac{\sigma^2}{k+1} +\left(\tfrac{\varsigma^2}{k+1}+2\rho^{2\tau}C^2\right)\bbe[\|x_k-x^*\|^2  ]\right \}+ \gamma C\rho^\tau \bbe[\|x_k-x^*\|^2].
	\end{align*}}
	Using the condition \eqnok{step_condition}, $V(x,y)=\tfrac{1}{2}\|x-y\|^2$ and we know that $16L^2\gamma_k^2\leq 1$, we have
	\begin{eqnarray*}
		\tsum_{t=1}^{k+1}   \bbe[\|x_{t+1}-x_t\|^2] 
		\leq 4\|x_1-x^*\|^2 +(k+1)\left[  64\gamma^2(\tfrac{\sigma^2+\varsigma^2 R^2}{k+1}+2\rho^{2\tau}C^2R^2) +8\gamma C \rho^\tau R^2  \right]
		%+32\gamma_k^2 (\tilde \sigma^2 +2\rho^{2\tau}C^2R^2  )+ 8\gamma_kC\rho^\tau R^2.
	\end{eqnarray*}
	Moreover, by choosing the parameter setting in \eqnok{step_condition}, we have
	$$
	\tsum_{i=1}^k \bbe[\|x_{t+1}-x_t\|^2 ]\le 4\|x_1-x^*\|^2 + (k+1) 64\gamma^2\tfrac{\sigma^2+\varsigma^2 R^2}{k+1} +\tfrac{R^2}{2} \leq 4\|x_1-x^*\|^2 + 64\gamma^2\sigma^2+R^2,
	$$
	in which $R^2=4\|x_1-x^*\|^2+32\gamma^2\sigma^2$ from \eqnok{def_R}.
	The result in \eqnok{bnd_res_no_FastTD} immediately follows from the previous conclusion and Lemma~\ref{lemma_bnd_res_FastTD}.
\end{proof}

\vgap

In view of Theorem~\ref{lemma_dist_no_strong_monotone_FastTD}, the FTD method
can find a solution $\bar x \in X$ s.t. $\bbe[\res(\bar x)] \le \epsilon$ in ${\cal O} (1/\epsilon^2)$
iterations \textcolor{black}{and ${\cal O} (1/\epsilon^4)$ overall sample complexity} for solving generalized stochastic monotone VIs in the Markovian noise setting. 
FTD seems to be the only one among these three algorithms, i.e., TD, CTD and FTD,
that can be applied to stochastic GMVIs with $\mu =0$.

\section{Policy evaluation in Markov decision processes} \label{MDP_RL}

In this section we consider the problem of policy evaluation which is an important step 
in many reinforcement learning algorithms.  The term policy evaluation pertains to the computation of the
value function $V_{\nu}$ of a given policy $\nu$ in a Markov decision process (MDP). 

We will restrict ourselves to MDPs with finite state and action spaces, though most of our results have natural counterparts in the countable case. An infinite horizon MDP is abstracted as a quintuple $ \mathcal{M} = ( \mathcal{S}, \mathcal{A}, p , r, \beta)$, where  $ \mathcal{S} = [n]$  denotes the state space, $ \mathcal{A}= [m]$ the action space, 
$ p : \clS \times \clS \times \clA \rightarrow \mathbb{R}$ the transition probability function, $ r : \clS \times \clS \times \clA \rightarrow \mathbb{R}$ is the reward function and $ \beta \in (0,1)$ the discount factor.

At time $t = 0,1,2, \hdots, $ as a result of choosing action $a \in \mathcal{A} $, while 
at state $s \in \mathcal{S}$, the system moves to some state $s' \in \mathcal{S}$ determined by the conditional probability 
$ \Pr[s_{t+1} = s' | s_t = s, a_t = a ] = p(s,s',a)$ and incurres the reward $r(s,s',a)$. We will restrict ourselves to stationary randomized policies $ \nu : \clS \times \clA \rightarrow \mathbb{R}$ where $ \nu(s,a) = \Pr[a_t = a| s_t = s]$. 
Once a policy is fixed, the sequence of states $ \{s_0, s_1, \hdots, \} $ becomes a time-homogeneous Markov-chain with transition probability matrix $P$, where  the $(i,j)$-th entry of $P$ is 
$ P_{ij} = \sum_{a\in \clA}  \nu(i,a) p(i,j,a)$. We assume %that the problem data are  such so 
that the resulting Markov chain has a single ergodic class with unique stationary distribution $\pi$, satisfying $ \pi = \pi P$. 
We denote by $ R(i) = \tsum_{j \in \clS} \tsum_{a \in \clA} p(i,j,a)  \nu(i,a) r(i,j, a)$ the expected instanteneous reward associated to state $i \in \mathcal{S}$.
The value function $ V_{\nu}: \mathcal{S} \rightarrow \mathbb{R}$ associated to the MDP is defined  as 
$$
V_{\nu}(s) = \mathbb{E} \bigg[ \tsum_{t=0}^{\infty} \beta^t r_t  | s_0 = s \bigg], ~~~\text{where}~ r_t = r(s_t, s_{t+1}, a_t), ~ t \in \mathbb{Z}_+.
$$
The expectation is taken over state-trajectories of the underlying Markov chain. We will think  interchangeably  of a map acting on a finite set $ \clS$ of cardinality $n$,  and a vector in $\mathbb{R}^n$ (e.g. $V_{\nu}$ and $R $).
The Bellman operator associated to the policy $\nu$ is denoted by $T_{\nu} : \mathbb{R}^n \rightarrow \mathbb{R}^n$
where 
$
T_{\nu} V(s) = R(s) + \beta \tsum_{s' \in \clS} P_{s s'} V(s'),~\forall s \in \clS.
$
The value function $ V_{\nu}$ corresponding to the policy $\nu$ satisfies the Bellman equation $ T_{\nu} V_{\nu} = V_{\nu}$, i.e.,
$
V_{\nu} = R  + \beta P V_{\nu}.
$
This is a fixed-point equation, which offers a natural gateway to our algorithmic developments on variational inequalities.

In many problems one  
 resorts to a parametric approximation of the value function and we will \textcolor{black}{focus on}  the case of linear function approximation. 
Given $d \leq n$ linearly independent  vectors $ \phi_1, \hdots, \phi_d$ in $ \mathbb{R}^n$,  we  define
$ V_{\theta} = \sum_{i=1}^d \phi_i \theta_i = \Phi   \theta,$ where $ \theta \in \mathbb{R}^d$. For a fixed state $s \in \mathcal{S}$ 
the feature vector associated to the state $s$ is denoted by $ \phi(s)^{\T} = [ \phi_1(s), \hdots, \phi_d(s)]$, while
its components are referred to as features.  

Given a positive-definite matrix $M \succ 0 $ we denote the weighted inner product as $ \langle x , y \rangle_M = 
x^{\T} M y $.  A natural weighting matrix is $ M = \diag(\pi)$, since $ \pi > 0 $ given our assumptions on the underlying Markov chain. For $\theta, \theta' \in \mathbb{R}^d$ one has
$$
\|  V_{\theta} - V_{\theta'} \|_{M} = \| \theta - \theta' \|_{\Sigma}, ~~~\text{where}~~~ \Sigma = \Phi^{\T} M \Phi,
$$ 
is the steady-state feature covariance matrix. 
We introduce the operator 
$F: \mathbb{R}^d \rightarrow \mathbb{R}^d$, where 
$$ F(\theta) = \Phi^{\T} M \big( \Phi \theta -   R - \beta  P \Phi \theta \big).$$
This is a Lipschitz continuous operator with strong monotonicity modulus $ \mu = \lambda_{\min}(\Sigma) (1 - \beta)$, where $\lambda_{\min}(\Sigma) $ is the smallest eigenvalue of the matrix $\Sigma$.  Denote  with $ \theta^*$  the vector for which $F(\theta^*)=0$. For simplicity we assume that $V_\nu$ is in the column-space of $\Phi$, i.e. $ V_{\nu} = \Phi \theta^*$. The consequences of relaxing this assumption are discussed in the remark at the end of the section.
At time instant $t \in \mathbb{Z}_+$, the corresponding stochastic operator is 
$$
\tilde{F}(\theta_t, \xi_t ) = \bigg( \langle \phi(s_t)  ,   \theta_t \rangle -   r_t   -   \beta   \langle  \phi(s_{t+1})   ,   \theta_t  \rangle  \bigg) ~ \phi(s_t), ~~~~~\text{where}~~ \xi_{t} = ( s_t, s_{t+1}, a_t).
$$
Let  $\Pi$ denote the stationary distribution
of $ \xi_t$ on $\Xi = \mathcal{S} \times \mathcal{S} \times \mathcal{A}$ and note that  $ \mathbb{E}_{\xi_t \sim \Pi} [\tilde{F}(\theta, \xi_t )] = F(\theta)$.
Applicability of our algorithmic developments hinges upon Assumptions A - D stated in Section \ref{sec_problem_statement}. 
To this end we note that \eqref{Lipschitz_pointwise} follows readily, given that $ \Xi $ is a finite set. In Assumption B, \eqref{unbiased_x*} is a direct consequence of the fact that $ V_{\nu} = \Phi \theta^*$ where $ F(\theta^*)=0$. Assumption C is more involved. For the TD and CTD algorithm, \eqref{variance_bound} is a consequence 
of the fact that $ \mathbb{E}[ \| \theta_t - \theta^* \|^2] $ is bounded and this circumstance is proven in the Appendix. As far as FTD is concerned we provide two ways
to bound the variance of the iterates, namely projection  and mini-batch.
Again the details are relegated to the Appendix.
 We  now verify \eqref{bound_delta_inner_lhs} in the ensuing lemma.  
\begin{lemma} \label{2_2_markovian}
Given the single ergodic class Markov chain $ \xi_1, \xi_2 , \hdots, $, there exists a constant $ \tilde{C} > 0 $ and $ \rho \in [0,1)$ such that for every 
$t, \tau \in \mathbb{Z}_+$ and  $\theta_t \in \mathbb{R}^d$ with probability 1, 
$$
\| F(\theta_t) - \mathbb{E}[\tilde{F}(\theta_t, \xi_{t+\tau})|\mathcal{F}_{t-1}] \|_*  \leq \tilde{C} \rho^{\tau} \sigma_{\max}(\Phi)^2 \sigma_{\max}( I - \beta P) \|\theta_t-\theta^*\|.
$$
\end{lemma}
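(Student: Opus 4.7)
My plan is to reduce the left-hand side to a linear expression in $\theta_t - \theta^*$, and then bound the resulting matrix's operator norm by a geometric mixing rate of the finite Markov chain. The starting observation is that both $F(\theta^*) = 0$ and $\mathbb{E}[\tilde{F}(\theta^*,\xi_{t+\tau})|\mathcal{F}_{t-1}] = 0$ hold. The second identity is exactly Assumption B, and in this MDP setting it follows directly from Bellman's equation: for any state $s$,
\beq
\mathbb{E}[\tilde{F}(\theta^*,\xi)\,|\,s_t = s] \;=\; \phi(s)\big(\langle\phi(s),\theta^*\rangle - R(s) - \beta\tsum_{s'}P_{ss'}\langle\phi(s'),\theta^*\rangle\big) \;=\; 0, \nn
\eeq
because $V_\nu = \Phi\theta^*$ satisfies $T_\nu V_\nu = V_\nu$. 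Subtracting these two zero quantities I rewrite
\beq
F(\theta_t) - \mathbb{E}[\tilde{F}(\theta_t,\xi_{t+\tau})|\mathcal{F}_{t-1}] \;=\; [F(\theta_t) - F(\theta^*)] - \mathbb{E}[\tilde{F}(\theta_t,\xi_{t+\tau}) - \tilde{F}(\theta^*,\xi_{t+\tau})\,|\,\mathcal{F}_{t-1}]. \nn
\eeq

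Next I exploit the fact that $\tilde F$ is affine in $\theta$. Specifically, for $\xi = (s,s',a)$, $\tilde{F}(\theta,\xi) - \tilde{F}(\theta^*,\xi) = \phi(s)(\phi(s) - \beta\phi(s'))^{\T}(\theta - \theta^*)$. Because $s_t$ is $\mathcal{F}_{t-1}$-measurable (it is the second component of $\xi_{t-1}$), the Markov property gives $q_\tau(s) := \Pr[s_{t+\tau} = s|\mathcal{F}_{t-1}] = (e_{s_t}^{\T} P^\tau)(s)$. Taking the conditional expectation, first over $s_{t+\tau+1}$ given $s_{t+\tau}$ and then over $s_{t+\tau}$ given $\mathcal{F}_{t-1}$, and mirroring this on the stationary side where $F(\theta_t) - F(\theta^*) = \Phi^{\T} M (I - \beta P)\Phi(\theta_t - \theta^*)$ with $M = \mathrm{diag}(\pi)$, I obtain the closed form
\beq
F(\theta_t) - \mathbb{E}[\tilde{F}(\theta_t,\xi_{t+\tau})|\mathcal{F}_{t-1}] \;=\; \Phi^{\T} D_\tau (I - \beta P)\Phi \, (\theta_t - \theta^*), \nn
\eeq
where $D_\tau := \mathrm{diag}(\pi - q_\tau)$.

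Applying sub-multiplicativity of the spectral norm in the Euclidean setting $\|\cdot\|_* = \|\cdot\|_2$, together with the elementary fact that the spectral norm of a diagonal matrix equals its largest entry in absolute value, gives
\beq
\|F(\theta_t) - \mathbb{E}[\tilde{F}(\theta_t,\xi_{t+\tau})|\mathcal{F}_{t-1}]\|_* \;\leq\; \sigma_{\max}(\Phi)^2 \, \sigma_{\max}(I - \beta P) \, \max_s |\pi(s) - q_\tau(s)| \, \|\theta_t - \theta^*\|. \nn
\eeq
The final ingredient is the geometric ergodicity of the finite-state, single-ergodic-class Markov chain: by Perron--Frobenius, or equivalently by the Doeblin minorization argument cited after Assumption D in the paper, there exist $\tilde{C} > 0$ and $\rho \in [0,1)$ such that $\|\mu^{\T} P^\tau - \pi\|_\infty \leq \tilde{C}\rho^\tau$ uniformly over probability vectors $\mu$. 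Specializing to $\mu = e_{s_t}$ bounds $\max_s |\pi(s) - q_\tau(s)|$ by $\tilde{C}\rho^\tau$, which combined with the display above yields the claim. The only nontrivial step is this mixing bound; everything preceding it is a linear-algebraic rewriting enabled by the Bellman equation and the affinity of $\tilde F$ in $\theta$.
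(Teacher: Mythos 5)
Your proof is correct and follows essentially the same route as the paper's: both reduce the difference to the linear form $\Phi^{\T}\,\mathrm{diag}(\pi - q_\tau)\,(I-\beta P)\Phi\,(\theta_t-\theta^*)$ (the paper writes the diagonal factor as $M - M_\tau(s_{t+\tau})$ and uses $R=(I-\beta P)\Phi\theta^*$ implicitly where you subtract the two vanishing quantities), then bound the operator norm by $\sigma_{\max}(\Phi)^2\sigma_{\max}(I-\beta P)$ times the entrywise deviation of the $\tau$-step distribution from $\pi$, which is controlled by geometric ergodicity. The only cosmetic difference is the citation for the mixing bound (the paper invokes Theorem 4.9 of Levin--Peres rather than Perron--Frobenius/Doeblin), which changes nothing substantive.
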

\begin{proof}
		\begin{comment}
First note that 		
	$
	\mathbb{E}_{\Pi} [  A(\xi_t)  ]  =  \Phi^{\T} ~  M ~   ( \Phi - \beta P \Phi), ~ \mathbb{E}_{\Pi} [  B(\xi_t)  ]  = \Phi^{\T}  M, 
	$
	so that 
	$
	\mathbb{E}_{\Pi} [\tilde{F}(\theta_t, \xi_t )] = \Phi^{\T} ~  M ~ ( \Phi \theta_t - T_{\mu} \Phi \theta_t)
	$.
Given $ \mathcal{F}_{t-1 }$, the estimate 
	$
	\theta_{t}
	$
	is uniquely defined, i.e., it is a deterministic function of $ s_0, \hdots, s_{t}$, $\theta_0$ and therefore
	$$
	\mathbb{E}[ 
	\langle \tilde{F}(\theta_{t }, \xi_{t+\tau})   ,  \theta_{t } - \theta^* \rangle ~|~\mathcal{F}_{t-1 } ]  =
	\langle \mathbb{E}[  \tilde{F}(\theta_{t }, \xi_{t+\tau}) ~|~  \mathcal{F}_{t -1} ] ,  \theta_{t} - \theta^* \rangle. 
	$$
		\end{comment}
We write 
	\begin{eqnarray*}
		\mathbb{E}[ 
		\tilde{F}(\theta_{t }, \xi_{t+\tau}) |\mathcal{F}_{t-1 } ]    & = &
		\sum_{i \in [n]} \sum_{j \in [n]} \sum_{a \in [m]}
		\Pr[s_{t+\tau} = i |s_{t}]  \nu(i, a) p(i,j,a)  \phi(	i )  \bigg(
	     (\phi(i)^\T - \beta \phi(j)^\T )   \theta_{t } -    r(i,j,a) \bigg)  \\
		& = & 
		  \Phi^{\T}   M_{\tau}(s_{t+ \tau})  \big( ( I - \beta P) \Phi \theta_{t } - R\big)
		  =  \Phi^{\T}   M_{\tau}(s_{t+ \tau}) ( I - \beta P) \Phi (\theta_t - \theta^*)
	\end{eqnarray*}
	where $ M_{\tau}(s_{t+ \tau}) = \diag( \AR{ccc}{\Pr[s_{t+\tau} = 1 |s_{t}] &\hdots& \Pr[s_{t+\tau} = n |s_{t}]})$. 
	Since
	$ F(\theta_{t }) = \Phi^{\T}   ~M~  ( I - \beta P)~ \Phi (\theta_{t } - \theta^*  ) $ we obtain 
	that 
	$$
	F(\theta_t) - \mathbb{E}[\tilde{F}(\theta_t, \xi_{t+\tau})|\mathcal{F}_{t-1}] = 
	\Phi^{\T}   ~(M- M_{\tau}(s_{t+ \tau}))~  ( I - \beta P)~ \Phi (\theta_{t } - \theta^*  )
	$$
	By \textcolor{black}{Theorem} 4.9 in \cite{levin2017markov} there exists $ \tilde{C} > 0 $ and $\rho \in [0,1) $ such that for all  $i \in [n]$ and for all  $s_{t} \in \mathcal{S}$
	$$
	|   \Pr[s_{t+\tau} = i ~|~s_{t}]  - \pi_{\infty,i} | \leq \tilde{C} \rho^{\tau}.
	$$
The desired result follows by noticing that $ \sigma_{\max}(\Phi^{\T}   ~(M- M_{\tau}(s_{t+ \tau}))~  ( I - \beta P)~ \Phi) \leq \tilde{C} \rho^{\tau} \sigma_{\max}(\Phi)^2 \sigma_{\max}( I - \beta P),$ where $\sigma_{\max}$ is the largest singular value.
\end{proof}

We now turn our attention towards summarizing our iteration complexity results in the context of policy evaluation in reinforcement learning. 
We also compare to the current state of the art, the projected Temporal Difference (PTD) algorithm that appears in \cite{russo_18}, and to this end we use the abbreviation $ \o = \lambda_{\min}(\Sigma)$ to reconcile the notation between the two papers. 

The inherent difference between our TD algorithm and PTD is the fact that the latter algorithm 
involves a projection step to a Euclidean ball of radius \textcolor{black}{$G$},   which is estimated as  \textcolor{black}{$G\leq \tfrac{2r_{\text{max}}}{\sqrt{\omega}(1-\beta)^{3/2}}$ in accordance to \cite{russo_18}, where $r_{\text{max}}$ is the maximum reward.}
 % we set $G:=r_{\text{max}}+2R_p$, 
 \textcolor{black}{
 In PTD the variable $ r_{\max} + 2 G$ is an upper bound to the magnitude of the stochastic operator.   We revisit \eqref{variance_bound}, adjusted to the current setting,
 $$
 \mathbb{E}[ \|\tilde{F}(x_t, \xi_{t+\tau}) -  
 \mathbb{E}[ \tilde{F}(x_t, \xi_{t+\tau}) | \mathcal{F}_{t-1}] \|_*^2 |\mathcal{F}_{t-1} ] \leq \tfrac{\sigma^2}{2} + 8\|x_t-x^*\|^2
 $$
and note that 
one of the advantages of our approach lies in the fact that we do not have to use an a-priori estimate on $\sigma^2$. Moreover, as it can be seen from \eqref{property_mini_batch}  we can improve the bound on the right hand side via mini-batches. This feature of our algorithms is important in the presence of distributed optimization capabilities as is the case in multiprocessor and multiagent parallel environments. }

\begin{comment}
To this end consider the situation where we have $l$ agents that can generate independent Markovian streams denoted by
$ s_0^{(i)}, \hdots, s_t^{(i)}$, where $i \in [l]$.  At the time of an algorithm  update each agent calculates 
$\tilde{F}_i(\theta_t)$ separately and via a parallel summation algorithm we form 
$ \tilde{F}(\theta_t) = \tfrac{1}{l} \sum_{i=1}^l \tilde{F}_i(\theta_t).$ 
\end{comment}

Both algorithms involve a time scale parameter,  $\tau_{\text{mix}}(\epsilon)$ and $\underline{\tau}$ respectively.
In the PTD algorithm,  $\tau_{\text{mix}}(\epsilon) = \mathcal{O}( \log (1 / \epsilon))$, while the parameter  $\underline{\tau}$ that enters our analysis is a constant.

\begin{table}[H]  
	\centering
	\begin{tabular}{|c|c|}
		\hline
		Algorithm & Overall Complexity \\
		\hline 
		PTD  &  	$
		\mathcal{O}\big(   \tfrac{\tau_{\text{mix}}(\epsilon)   G^2 }{ (1-\beta)^2 \o^2  } \tfrac{1}{\epsilon} \log \tfrac{1}{\epsilon}\big)$   \\
			\hline				
		 TD  & $
		 \mathcal{O} \big(( \tfrac{\underline{\tau} }{(1-\beta)^2 \o^2}+ 
		 \sqrt{\tfrac{\underline{\tau}  }{(1-\beta)^3 \o^3 (1-\rho)}}) 
		 \tfrac{\|\theta_1-\theta^*\|}{\sqrt{\epsilon}} + \tfrac{\underline{\tau} \sigma^2 }{(1-\beta)^2 \o^2  \epsilon}  \big) 
		 $
		  \\
	\hline		
	CTD  &$ {\cal O} \big(\tfrac{\underline{\tau}  }{(1-\beta)^2 \o^2} \log\tfrac{\|\theta_1-\theta^*\|^2}{\epsilon} + \tfrac{\underline{\tau} \sigma^2 }{(1-\beta)^2 \o^2 \epsilon} \big)$ \\
\hline		
	FTD  &${\cal O}\big( \tfrac{\underline{\tau}  }{(1-\beta) \o} \log\tfrac{\|\theta_1-\theta^*\|^2}{\epsilon}+\tfrac{\underline{\tau} (\sigma^2+G^2) }{(1-\beta)^2 \o^2 \epsilon} \big)$ \\
\hline		
	\end{tabular}
	\caption{Summary of iteration complexity for obtaining a (stochastic) $\epsilon$-solution}\label{table1}
\end{table}
Since our algorithms benefit from variance reduction via mini-batch, we can employ this technique
to achieve the linear rate of convergence in terms the number of policy value updates made to $\theta_t$ in regards to CTD and FTD with the latter algorithm exhibiting then the fastest convergence. 

Furthermore,
as we can  observe in the above table the complexity iteration results deteriorate with
$ \beta \approx 1$. We provided a remedy to this situation via our robust FTD analysis 
in Theorem~\ref{lemma_dist_no_strong_monotone_FastTD}. The particular stepsize policy allows the FTD method
to find a solution $\bar{\theta} \in \mathbb{R}^d$ s.t. $\bbe[\res(\bar \theta)] \le \epsilon$ in ${\cal O} (1/\epsilon^2)$ iterations, while employing ${\cal O} (1/\epsilon^2)$ batch size, where the expected residual corresponds to the expected Bellman error in the RL terminology. 

%\textbf{Remark:} 
We now address the situation when $V_{\nu} $ is not in the column-space of $\Phi$ and 
as such $ \Phi \theta^* \neq  V_\nu$. Our analysis shows that, in this case, the error 
$ \|  \Phi \theta_k - V_{\nu} \| $ will have a constant component, which  is in the same order of magnitude as $\|  \Phi \theta^* - V_{\nu} \|$.
\textcolor{black}{We will discuss this circumstance in the context of the FTD algorithm with the stepsize selection of Corollary \ref{FTD_boundness1_1}.}
The essence of our conclusions carries over to the other cases as well.  

\begin{lemma}\label{linear_approx_error}
Let  $\{\theta_t\}$ denote the iterates of the FTD algorithm. Assumption D \eqref{bound_delta_inner_lhs}  is now subject to an additional bias term
$$
\| F(\theta_t) - \mathbb{E}[\tilde{F}(\theta_t, \xi_{t+\tau})|\mathcal{F}_{t-1}] \|_*  \leq C\rho^\tau \|\theta_t-\theta^*\|+ \tfrac{C \rho^\tau}{\sigma_{\max}(\Phi)  }
\|  \Phi \theta^* - V_{\nu}  \|.
$$
Consequently \eqnok{bound_delta_inner1} and \eqnok{bound_delta_norm1} now become
\textcolor{black}{
\begin{align}
 \langle F(\theta_t) - \mathbb{E}[\tilde{F}(\theta_t, \xi_{t+\tau})|\mathcal{F}_{t-1}], \theta_t -\theta^*\rangle  \leq C\rho^\tau \|\theta_t-\theta^*\|^2+ \tfrac{C \rho^\tau}{\sigma_{\max}(\Phi)  }
 \|  \Phi \theta^* - V_{\nu}  \| \|\theta_t-\theta^*\|.\label{new_2_5}\\
\mathbb{E}[\| F(\theta_t) - \tilde{F}(\theta_t, \xi_{t+\tau}) \|^2 | \mathcal{F}_{t-1}] \leq \sigma^2 + (\varsigma^2 + 4 C^2\rho^{2\tau}) \|\theta_t-\theta^*\|^2 + 
\tfrac{4 C^2 \rho^{2 \tau}}{\sigma^2_{\max}(\Phi)  }
\|  \Phi \theta^* - V_{\nu} \|^2.\label{new_2_6}
\end{align}}
\end{lemma}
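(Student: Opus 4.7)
The plan is to trace through the algebra in the proof of Lemma~\ref{2_2_markovian} while carefully keeping the approximation error $\Phi\theta^*-V_\nu$ rather than discarding it via the assumption $V_\nu=\Phi\theta^*$. By the same computation as in Lemma~\ref{2_2_markovian}, the conditional expectation of the stochastic operator is
\[
\mathbb{E}[\tilde F(\theta_t,\xi_{t+\tau})\mid\mathcal F_{t-1}] \;=\; \Phi^{\T}M_\tau(s_{t+\tau})\bigl((I-\beta P)\Phi\theta_t - R\bigr),
\]
and $F(\theta_t) = \Phi^{\T}M\bigl((I-\beta P)\Phi\theta_t - R\bigr)$. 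The key algebraic observation is that the Bellman equation for the true value function gives $R = (I-\beta P)V_\nu$, so that $(I-\beta P)\Phi\theta_t - R = (I-\beta P)(\Phi\theta_t - V_\nu)$. Adding and subtracting $\Phi\theta^*$ inside this expression yields the decomposition
\[
F(\theta_t) - \mathbb{E}[\tilde F(\theta_t,\xi_{t+\tau})\mid\mathcal F_{t-1}]
= \Phi^{\T}(M-M_\tau(s_{t+\tau}))(I-\beta P)\bigl[\Phi(\theta_t-\theta^*) + (\Phi\theta^* - V_\nu)\bigr].
\]

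Next I would apply the triangle inequality to split this into two pieces. The first piece, $\Phi^{\T}(M-M_\tau(s_{t+\tau}))(I-\beta P)\Phi(\theta_t-\theta^*)$, is exactly what is controlled in the proof of Lemma~\ref{2_2_markovian}: by the geometric ergodicity bound $|\Pr[s_{t+\tau}=i\mid s_t]-\pi_i|\le \tilde C\rho^\tau$ (Theorem~4.9 of \cite{levin2017markov}), its norm is at most $\tilde C\rho^\tau\sigma_{\max}(\Phi)^2\sigma_{\max}(I-\beta P)\|\theta_t-\theta^*\| = C\rho^\tau\|\theta_t-\theta^*\|$. The second piece is handled by the same operator-norm estimate, only applied to the fixed vector $\Phi\theta^*-V_\nu$ (no extra factor of $\Phi$ on the right), giving a bound of $\tilde C\rho^\tau\sigma_{\max}(\Phi)\sigma_{\max}(I-\beta P)\|\Phi\theta^*-V_\nu\| = \tfrac{C\rho^\tau}{\sigma_{\max}(\Phi)}\|\Phi\theta^*-V_\nu\|$. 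Summing these yields the claimed modification of \eqref{bound_delta_inner_lhs}.

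The two consequences are then mechanical. For \eqref{new_2_5}, Cauchy--Schwarz on $\langle F(\theta_t)-\mathbb{E}[\tilde F(\theta_t,\xi_{t+\tau})\mid\mathcal F_{t-1}],\theta_t-\theta^*\rangle$ combined with the just-derived norm bound gives both terms directly, with $\|\theta_t-\theta^*\|$ pulled out of each. For \eqref{new_2_6}, I would mimic the splitting in \eqref{splitting_terms} of Lemma~\ref{bound_delta_norm}: apply Young's inequality $\|a+b\|_*^2\le 2\|a\|_*^2+2\|b\|_*^2$ to separate the bias term $\|F(\theta_t)-\mathbb{E}[\tilde F(\theta_t,\xi_{t+\tau})\mid\mathcal F_{t-1}]\|_*^2$ from the centered fluctuation $\|\mathbb{E}[\tilde F(\theta_t,\xi_{t+\tau})\mid\mathcal F_{t-1}] - \tilde F(\theta_t,\xi_{t+\tau})\|_*^2$. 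Squaring the new two-term bound on the bias and using $(u+v)^2\le 2u^2+2v^2$ once more produces the coefficient $4C^2\rho^{2\tau}$ on both $\|\theta_t-\theta^*\|^2$ and $\tfrac{1}{\sigma_{\max}^2(\Phi)}\|\Phi\theta^*-V_\nu\|^2$, while Assumption C controls the fluctuation term by $\sigma^2+\varsigma^2\|\theta_t-\theta^*\|^2$.

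The only non-routine step is the decomposition in the first paragraph: recognizing that the failure of the Bellman equation at $\Phi\theta^*$ is measured precisely by $(I-\beta P)(\Phi\theta^*-V_\nu)$, so that the approximation error enters additively through a fixed vector rather than through a perturbation of the operator itself. Once this observation is in hand, the rest amounts to invoking the same contraction estimate for $M-M_\tau(s_{t+\tau})$ used throughout Section~\ref{MDP_RL}, separately on the state-dependent part $\Phi(\theta_t-\theta^*)$ and on the fixed bias $\Phi\theta^*-V_\nu$.
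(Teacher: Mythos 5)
Your proposal is correct and follows essentially the same route as the paper: both reduce the bias to $\Phi^{\T}(M-M_\tau(s_{t+\tau}))(I-\beta P)\bigl[\Phi(\theta_t-\theta^*)+(\Phi\theta^*-V_\nu)\bigr]$ via the Bellman identity $R=(I-\beta P)V_\nu$, then bound the two pieces separately with the geometric ergodicity estimate (the paper reaches the same identity by adding and subtracting the operators at $\theta^*$ rather than manipulating the argument vector, which is a purely cosmetic difference). Your accounting of the constants in \eqref{new_2_5} and \eqref{new_2_6}, including the factor $4C^2\rho^{2\tau}$ from the two applications of Young's inequality and the single factor of $\sigma_{\max}(\Phi)$ on the bias term, matches the paper's claims.
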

\begin{proof}
	Continuing the discussion in Lemma \ref{2_2_markovian}, we have
\begin{align*}
	&\quad F(\theta_t) - \bbe[\tilde F(\theta_t,\xi_{t+\tau})|\mathcal{F}_{t-1}] \\
	&= \big(F(\theta_t)-F(\theta^*)\big) - \bbe[\tilde F(\theta_t,\xi_{t+\tau})-\tilde F(\theta^*,\xi_{t+\tau})|\mathcal{F}_{t-1}]  + \big(F(\theta^*)-\bbe[\tilde F(\theta_t,\xi_{t+\tau}) ]\big) \\
	&= \Phi^\top (M-M_\tau(s_{t+\tau}))(I-\beta P) \Phi (\theta_t-\theta^*) + \Phi^\top (M-M_\tau(s_{t+\tau}))\big((I-\beta P) \Phi\theta^*-R\big)\\
	&= \Phi^\top (M-M_\tau(s_{t+\tau}))(I-\beta P) \Phi (\theta_t-\theta^*) + \Phi^\top (M-M_\tau(s_{t+\tau}))\big((I-\beta P) (\Phi\theta^*- V_\nu)\big),
\end{align*}
where the last equality follows from Bellman's equation. Consequently by utilizing the ergodicity condition,  
\begin{align*}
&\quad \|F(\theta_t) - \bbe[\tilde F(\theta_t,\xi_{t+\tau})|\mathcal{F}_{t-1}] \|\\
& \leq \tilde C \rho^\tau \sigma_{\max}(\Phi)^2\sigma_{\max} (I-\beta P) \|\theta_t-\theta^*\| + \tilde C \rho^\tau \sigma_{\max}(\Phi)\sigma_{\max} (I-\beta P) \|\Phi\theta^* - V_\nu\|
\end{align*}
By taking $C:= \tilde C \sigma_{\max}(\Phi)^2\sigma_{\max} (I-\beta P)$, it follows that
$$
\|F(\theta_t) - \bbe[\tilde F(\theta_t,\xi_{t+\tau})|\mathcal{F}_{t-1}] \| \leq C\rho^\tau \|\theta_t-\theta^*\| + \tfrac{C\rho^\tau}{\sigma_{\max}(\Phi)} \|\Phi \theta^*-V_\nu\|.
$$
\end{proof}

\textcolor{black}{Given the results in Lemma \ref{linear_approx_error}, we can infer the convergence results of our algorithms under inexact feature approximation. Taking the FTD algorithm as an instance, note that \eqnok{bnd_FastTD_temp} still holds, and we replace \eqnok{bound_delta_inner1} and \eqnok{bound_delta_norm1} used in \eqnok{f} by \eqref{new_2_5} and \eqref{new_2_6}. }
\textcolor{black}{Taking the same stepsize in Corollary \ref{FTD_boundness1_1} and the projection radius $G = \tfrac{2r_{\text{max}}}{\sqrt{\omega}(1-\beta)^{3/2}}$, it follows that 
$$
	\bbe[\| \theta_{k+1} - \theta^* \|^2] \leq   \tfrac{\| \theta_1 - \theta^*\|^2}{(1-\beta)^2\omega^2} \mathcal{O}(\tfrac{1}{k^2}) + \tfrac{\sigma^2+R^2}{(1-\beta)^2\omega^2} \mathcal{O}(\tfrac{1}{k})+ \tfrac{\|  \Phi \theta^* - V_{\nu} \|^2}{\sigma^2_{\max}(\Phi) } \mathcal{O}(1),
$$
where $R^2=\mathcal{O}\{\|\theta_1-\theta^*\|^2+G^2+\tfrac{\|  \Phi \theta^* - V_{\nu} \|^2}{\sigma^2_{\max}(\Phi) }\}$. Taking the triangle inequality  $ \|  \Phi \theta_k - V_{\nu} \| \leq \|  \Phi \theta_k - \Phi \theta^* \| + 
\|  \Phi \theta^* - V_{\nu} \|$ into account, we deduce that  
\begin{align*}
\bbe[\|  \Phi \theta_{k+1} - V_{\nu} \|^2] \leq
 \tfrac{\| \theta_1 - \theta^*\|^2\sigma^2_{\max}(\Phi) }{(1-\beta)^2\omega^2}  \mathcal{O}(\tfrac{1}{k^2}) + \tfrac{(\sigma^2+R^2)\sigma^2_{\max}(\Phi)}{(1-\beta)^2\omega^2}  \mathcal{O}(\tfrac{1}{k}) + \|  \Phi \theta^* - V_{\nu} \|^2 \mathcal{O}(1) .
\end{align*}}
From the above relationships we see that the error caused by the inexact feature approximation is not amplified by the algorithm.

\section{Numerical experiments} \label{sec-num}   
%In this section we report some numerical results for the Temporal Difference (TD) algorithm as well as the Fast Temporal Difference (FTD) algorithm.

%\textbf{ 2D Grid-World Example.}
We demonstrate our stochastic policy evaluation algorithms 
on the basis of a classic
finite state-action space problem in reinforcement learning, referred to as the  
2D Grid-World example. 
 An agent obtains a positive reward when they reach a predetermined goal and negative ones when they go through the specific states, designated as traps, see also \cite{dann2014policy}. The agent picks with higher probability the direction (up, down, right and left) that points towards the goal, whereas ties are broken randomly.  We are interested in computing the value function $x^*$ for each possible initial state of the agent.  The discount factor is denoted by $\beta$. 
 \begin{figure}[H]
 	\centering
 	\includegraphics[width=6cm]{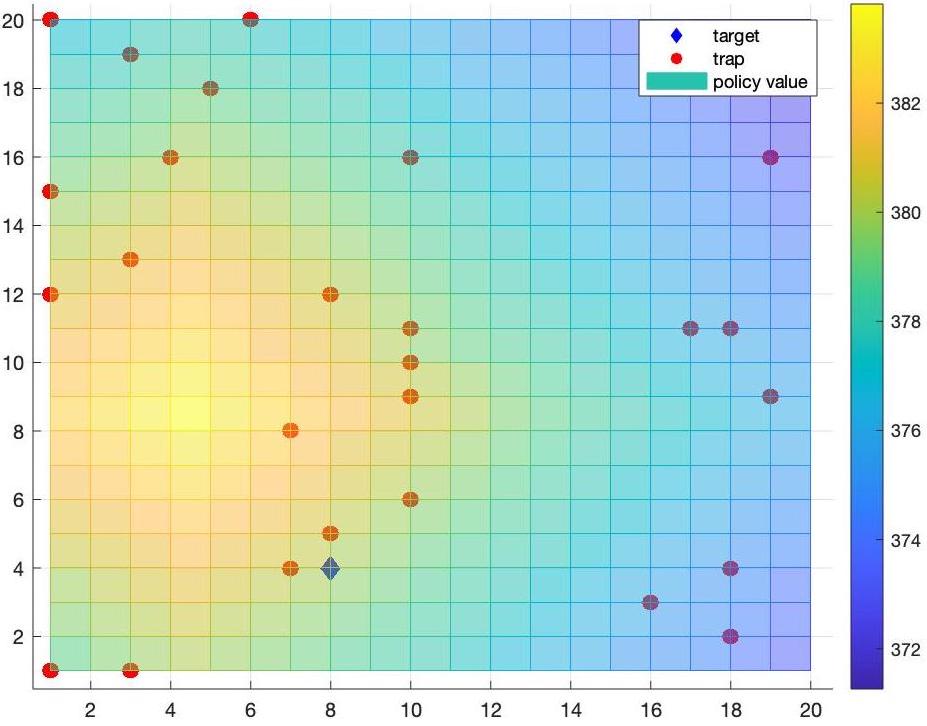}
 	\caption{2D-Grid World Example}
 	\label{fig_2D_Grid_0}
 \end{figure}
 We tested the algorithm on the synthetic data, in terms of the position of the goal and the locations of the traps. The dimension of the state space is set to $S:=|\mathcal{S}|=400$. Our square grid contains one goal-state (we assign to it a reward of $r= 1$) and 30 traps (we assign to them a reward of $r =-0.2$). With probability 0.95 the agent chooses a direction that points towards the
 goal and with probability 0.05 a random direction.
 
   The parameter $\tau$ was progressively increased, as in $\tau_k = 2^k , k \in [5]$. The results are depicted for $\tau = 8$, since inreasing beyond this value did not offer significant improvements in terms of performance.  
   The initial simulation segment that we used in order to decide an adequate 
   value for $\tau$ is depicted below.
\begin{figure}[H]
	\centering
	\begin{minipage}[t]{0.4\linewidth}
		\centering
		\includegraphics[width=5cm,height=4cm]{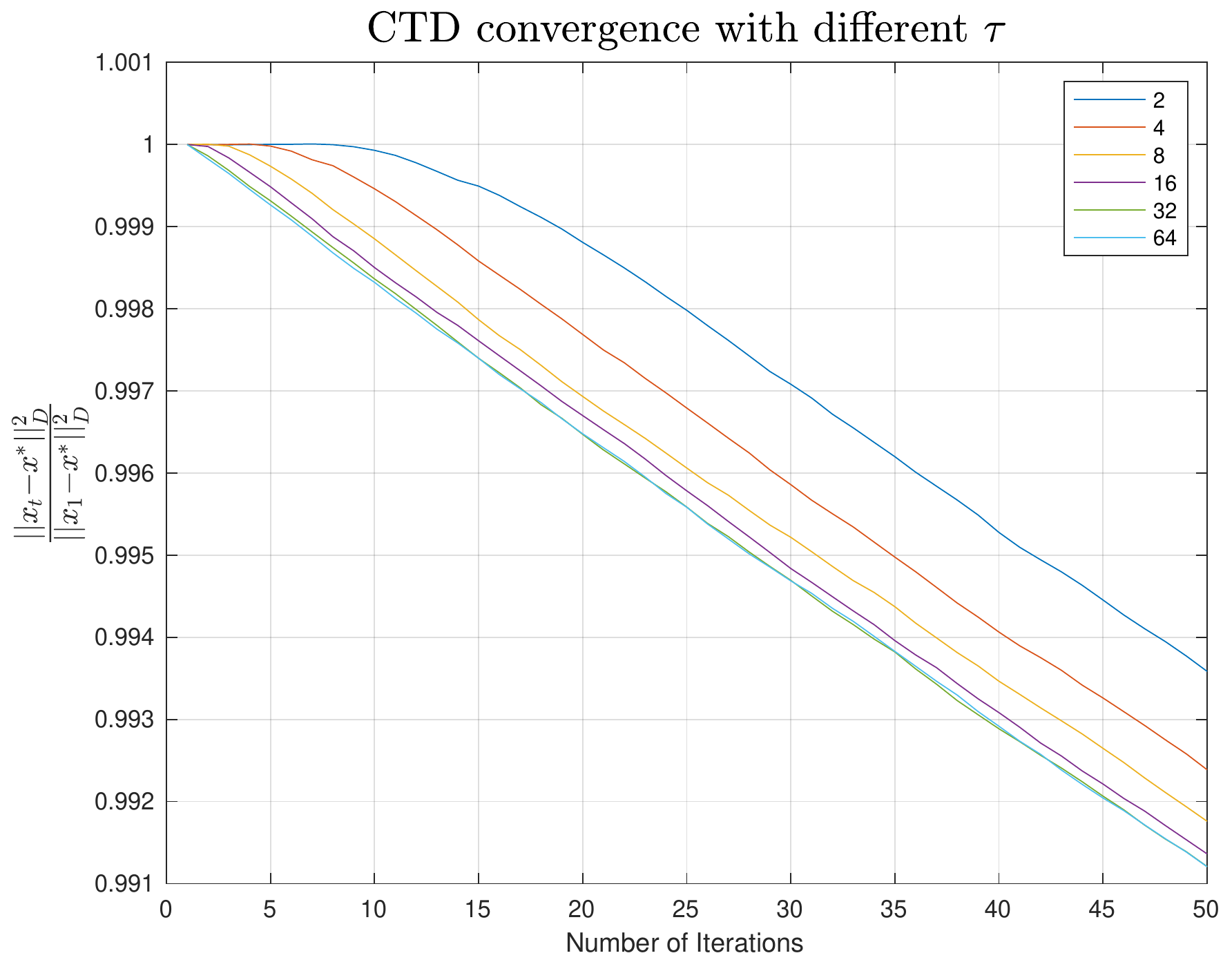}
	\end{minipage}
	\begin{minipage}[t]{0.4\linewidth}
		\centering
		\includegraphics[width=5cm,height=4cm]{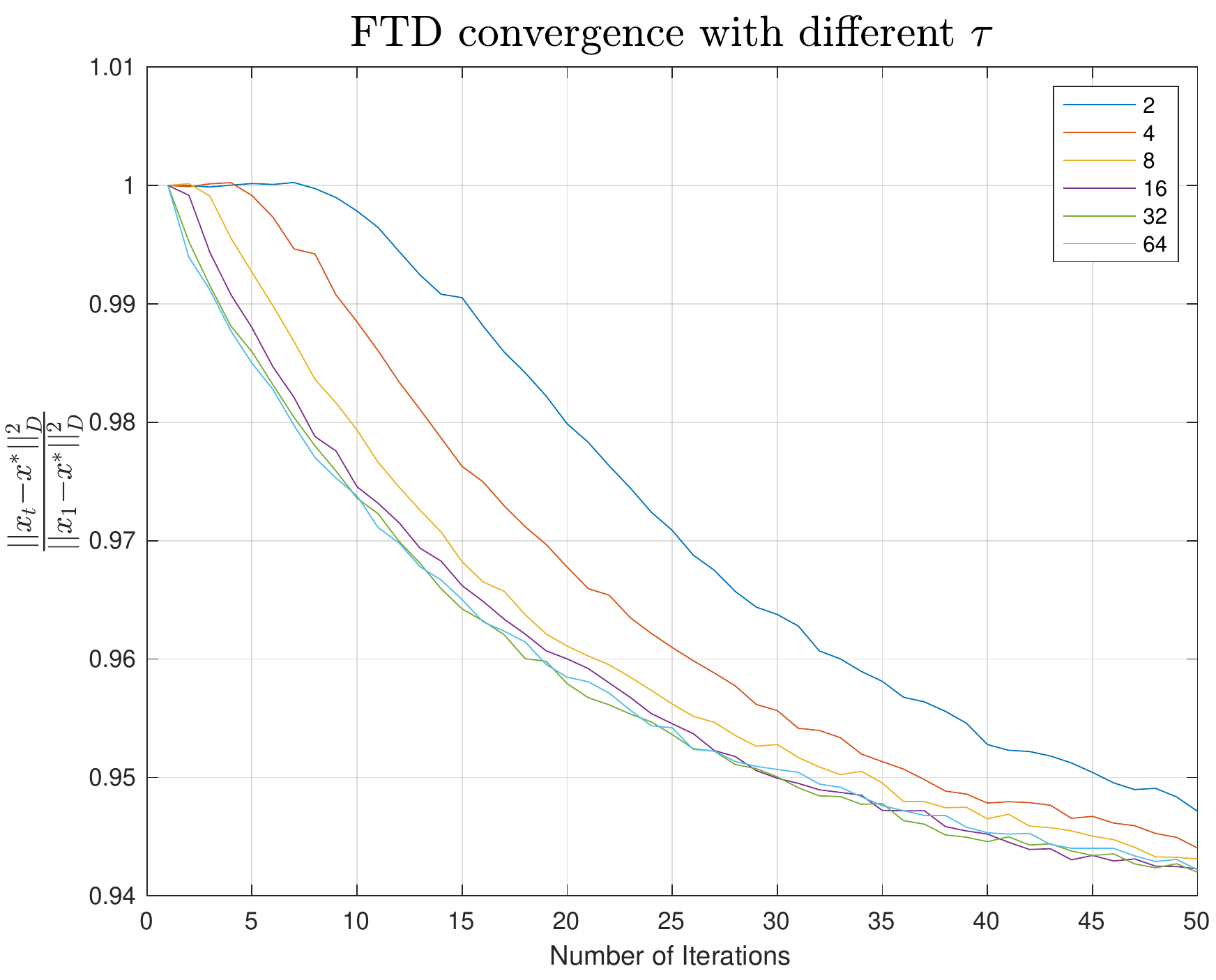}
	\end{minipage}
	\caption{ Initial simulation segment for determining $\tau$.}
\end{figure}
%{ \bf GL: Please illustrate how the parameter $\tau$ is chosen in these experiments. How does it impact the comparison of different algorithms.}
\textcolor{black}{In this example the selection of $\tau$ utilized a-priori knowledge of the optimal solution $x^*$. In problems where this information is not available a viable approach consists of building estimates for $\rho$ and $C$ and therefore $\tau$ via the Doeblin minorization condition, see \cite{rosenthal95} and the references therein.}
	 
We will apply the TD algorithm, three versions of the CTD algorithm and four versions of the FTD algorithm and compare to the Projected Temporal Difference (PTD) algorithm in \cite{russo_18}. The three versions of the CTD algorithm are implemented with the stepsizes selected as in Corollary \ref{step_size_TD_skipping} (CTD-1), Corollary \ref{step_size_TD_skipping_constant} (CTD-2) and Corollary \ref{restart_stepsize} (CTD-3). The four versions of the FTD algorithm are implemented with the stepsizes selected as in Corollary \ref{step_size_stoch_strong_mon_k_unknown} (FTD-1), Corollary \ref{step_size_stoch_strong_mon_k_known} (FTD-2), Corollary \ref{restart_stepsize_FastTD} (FTD-3) and Theorem \ref{lemma_dist_no_strong_monotone_FastTD} (FTD-4). 

The theoretical analysis provides us with conservative stepsize policies which will ensure the convergence for all the algorithms above. However, in terms of the actual implementation, we used the first 200 iterations to fine-tune each stepsize policy in order to achieve faster convergence. Our fine-tune principle is that we only change the value of the Lipschitz constant $L$.
For fairness purposes we maintain this convention  across all  algorithms and stepsize policies. Based on our experiments, we set $L_{\beta_1}=0.95$,  $L_{\beta_2}= 0.5$, $L_{\beta_3}=0.25$ for the cases $\beta_1=0.9$, $\beta_2=0.99$, $\beta_3= 0.999$ respectively. Our choice of $L$  improves the convergence speed while ensuring that the error of the algorithm decreases steadily. The motivation behind our approach lies in the fact that 
 the bound of \eqref{def_tildeQ_FastTD} only requires a local Lipschitz constant for each time step $t$, which can be much smaller than the global one. 

\begin{figure}[H]
	\centering
	\begin{minipage}[t]{0.4\linewidth}
		\centering		\includegraphics[width=5cm,height=3.8cm]{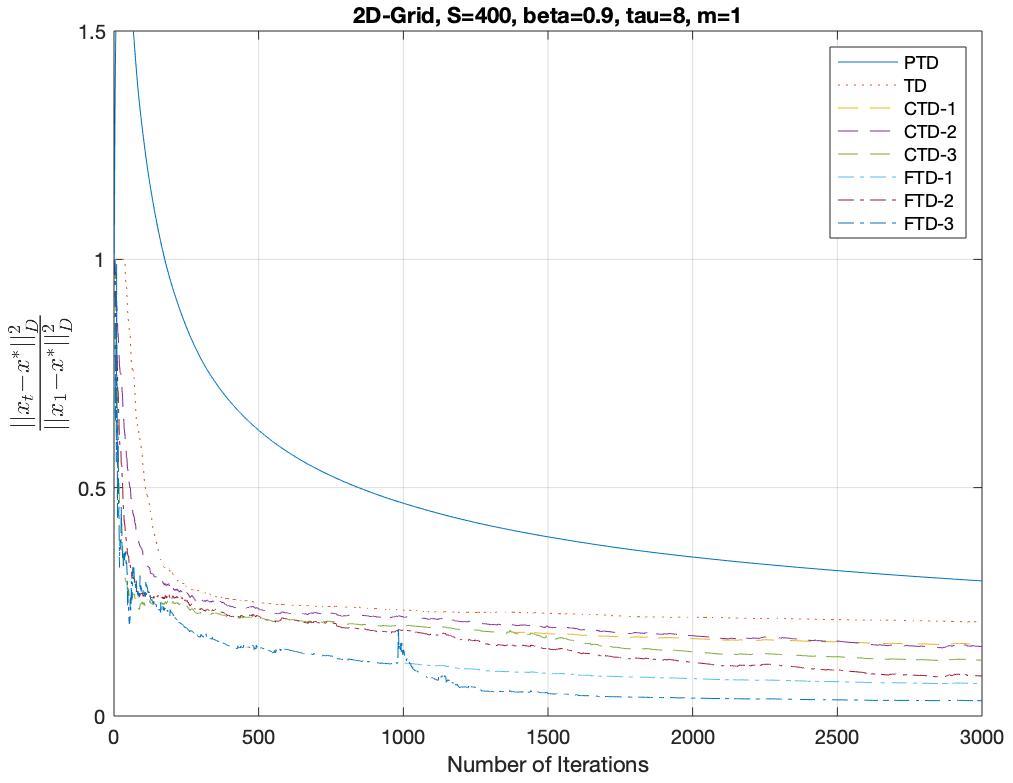}
	\end{minipage}
	\begin{minipage}[t]{0.4\linewidth}
		\centering
		\includegraphics[width=5cm,height=3.8cm]{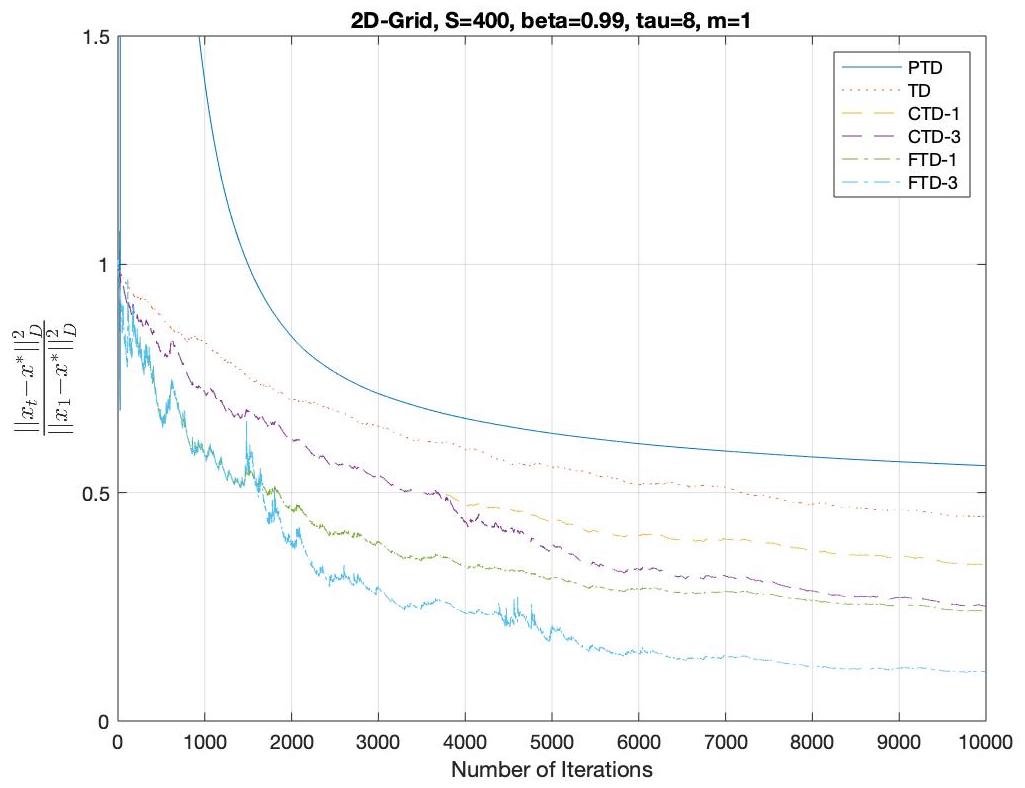}
	\end{minipage}
	\caption{Comparison of the algorithms for the 2D-Grid world example. From left to right $\beta$ is set to $0.9$, $0.99$ respectively. In the $y$-axis we report ratios in terms of the Euclidean norm $ \| \cdot \|_D$, where $D = \diag(\pi)$. }
	\label{fig_2D_Grid_1}
\end{figure}
In the two experiments above, we used a single trajectory ($m=1$) to generate the operator value. 
For $\beta =0.9$ the performance of TD algorithm is comparable to CTD and FTD. However, in the more challenging setting, when $\beta =0.99$, the advantage of CTD and FTD become pronounced.
The results indicate that the FTD algorithm exhibits faster convergence to the true value function. In particular the FTD-3 index-resetting stepsize policy obtains the fastest convergence. Similarly, we observe that the CTD-3 algorithm obtains faster convergence to the value function
than CTD-1. Additionally, in view of the expression for $q$ in Corollary \ref{step_size_TD_skipping_constant} and Corollary \ref{step_size_stoch_strong_mon_k_known} we see that as $\mu$ decreases 
one needs a higher $m$ to maintain a valid step-size, which is not the case for $\beta=0.99$ in our simulations.  
\begin{figure}[H]
	\centering
	\begin{minipage}[t]{0.3\linewidth}
		\centering
		\includegraphics[width=5cm,height=3.8cm]{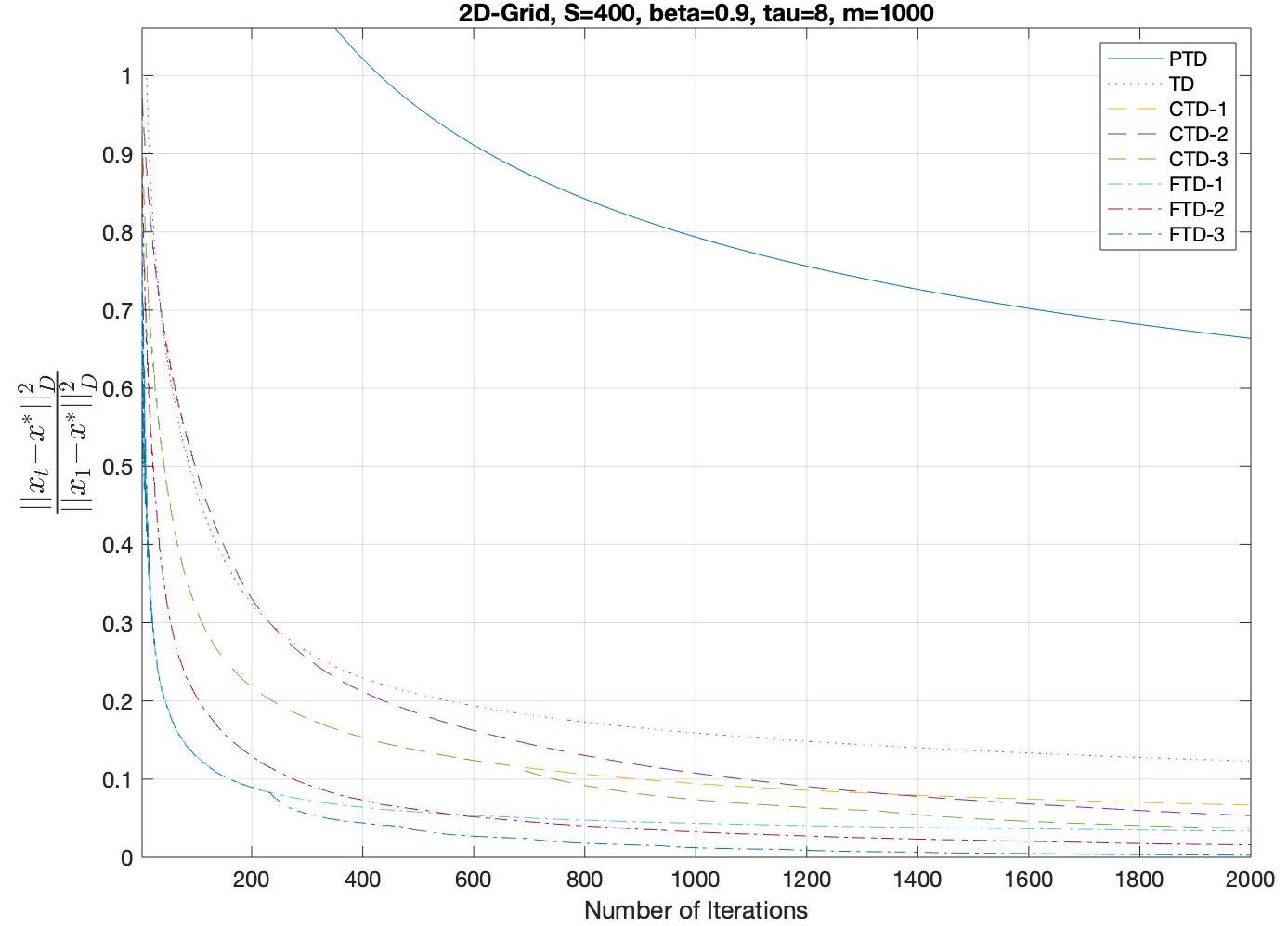}
	\end{minipage}
	\begin{minipage}[t]{0.3\linewidth}
		\centering
		\includegraphics[width=5cm,height=3.8cm]{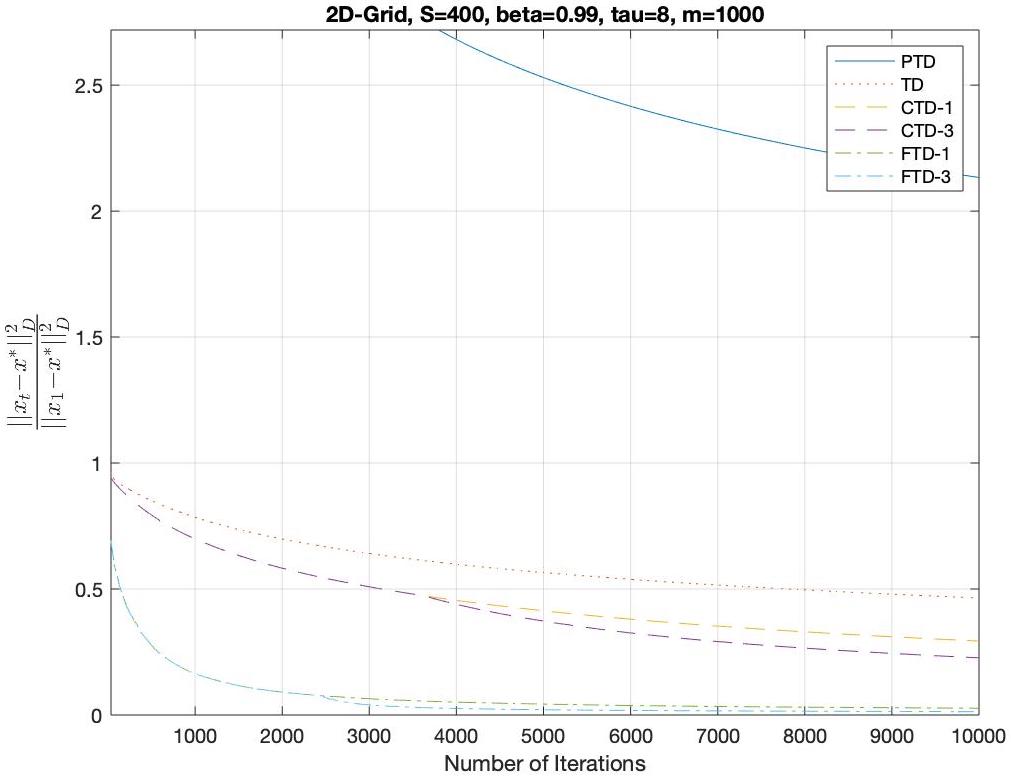}
	\end{minipage}
	\begin{minipage}[t]{0.3\linewidth}
		\centering
		\includegraphics[width=5cm,height=3.8cm]{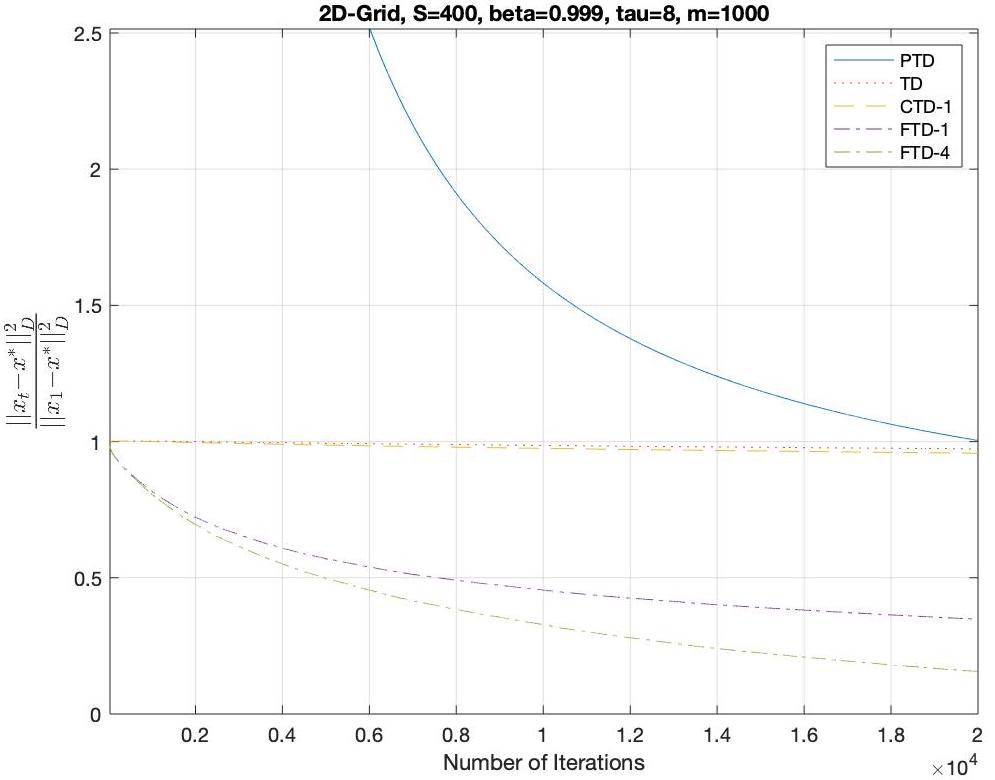}
	\end{minipage}
	\caption{Comparison of algorithms for the 2D-Grid world example. From left to right $\beta$ is set to $0.9$,  $0.99$, $0.999$ respectively. For each experiment the operator value was averaged with
		a batch size of $m=1000$. }
	\label{fig_2D_Grid_2}   
\end{figure}
In the second set of experiments (see Figure~\ref{fig_2D_Grid_2}), we increased the batch size to $m = 1000$ in order to control the variance $\sigma^2$ .
In all three experiments with averaged operator, the FTD algorithm converges faster to the true value function. The results
exhibit a similar trend as in the single trajectory experiments when $\beta=0.9$ and $\beta=0.99$. When $\beta=0.999$,
i.e., $ \mu = 0.001$,  
all the algorithm designs that evoke the generalized strong monotonicity condition converge slowly. However, in accordance to our expectations the implementation of the robust FTD-4 algorithm exhibited the fastest convergence.

\section{Concluding remarks} \label{sec_conclusion}
The  paper investigated stochastic variational inequalities (VI) under Markovian noise with a view towards 
 stochastic policy evaluation problem in reinforcement learning. 
 We developed a variety of simple TD learning  type algorithms motivated by its original version that maintain its simplicity, while offering distinct advantages in terms of non-asymptotic analysis.
 We analyzed the standard TD algorithm and developed two new stochastic algorithms
 referred to as CTD and FTD. The CTD algorithm involves periodic updates of the stochastic iterates, which reduces the bias and therefore exhibits improved iteration complexity. The FTD algorithm  combines elements of CTD and the stochastic operator extrapolation method	of the companion paper.  
For a novel index resetting policy FTD exhibits optimal convergence rate. We also devised a robust version of the algorithm  
that is particularly suitable for discounting factors close to 1. Numerical experiments conducted on a benchmark policy evaluation problem demonstrate the advantages of our proposed algorithms in comparison to prior literature.

\renewcommand\refname{Reference}

\bibliographystyle{plain}
\bibliography{revised_version_ARXIV.bib}

\end{document}